\newcommand{\showhide}[1]{#1} % show version
\numberwithin{equation}{section}
\newtheorem{thm}{Theorem}[section]
\newtheorem{cor}[thm]{Corollary}
\newtheorem{prop}[thm]{Proposition}
\newtheorem{lemma}[thm]{Lemma}
\newcommand{\cl}{\mathrm{cl}}
\newcommand{\rk}{\mathrm{rk}}
\title{Syzygies on Tutte polynomials of freedom matroids}
\author{Joseph P.S. Kung}
\begin{document}

\begin{abstract}    
It follows from a theorem of H. Derksen [{\it J. Algebraic Combin.,}  30 (2009) 43--86] that the Tutte polynomial of a rank-$r$ matroid 
on an $n$-set is ``naturally'' a linear combination of Tutte polynomials of rank-$r$ size-$n$ freedom matroids.  However, the Tutte polynomials of rank-$r$ size-$n$ freedom matroids are not linearly independent.  We construct two natural bases for these polynomials and as a corollary, we prove that the Tutte polynomials of rank-$r$ matroids of size-$n$ spans a subspace of dimension $r(n-r)+1.$  We also find a generating set for the linear relations between Tutte polynomials of freedom matroids.  This generating set is indexed by a pair of intervals, one of size $2$ and one of size $4,$ in the weak order 
of freedom matroids.  This weak order is a distributive lattice and a sublattice of Young's partition lattice.    
\end{abstract}

\maketitle

\subjclass{Primary 05B35;
Secondary 05B20, 05C35, 05D99, 06C10, 51M04, 52B40}

%\keywords{Tutte polynomials, freedom matroids}
\maketitle

\section{Two matroid invariants}

We begin with the cadet, the $\mathcal{G}$-invariant, introduced by Derksen \cite{Derksen} in 2009.     
Let $M$ be an $(n,r)$-matroid, that is, a rank-$r$ matroid 
on the set $\{1,2,\ldots, n\}$ with rank function $\rk$ and closure $\cl.$  For a permutation $\pi$ on $\{1,2, \ldots, n\},$  the {\em rank sequence} $\underline{r}(\pi)$ of $\pi$ is the sequence   $ r_1 r_2 \ldots r_n$ 
defined by $r_1 = \rk(\{\pi(1)\})$ and for $j \geq 2,$
\[
r_j = \rk(\{\pi(1), \pi(2),\ldots,\pi(j)\}) - \rk(\{\pi(1), \pi(2),\ldots,\pi(j-1)\}). 
\]
It is immediate that $r_j = 0$ or $1,$ there are exactly $r$ $1$'s, and the set 
%
%$B(\underline{r}),$ defined by 
%\[ B(\underline{r}) = \{\pi(j): r_j =1\},   
%\] 
 $\{\pi(j): r_j =1\}$  
is a basis  of $M.$   

A {\sl bit sequence} is a sequence of zeros and ones, and an {\sl $(n,r)$-sequence} is a bit sequence of length $n$ with (exactly) $r$ $1$'s.  Let $[\underline{r}]$ be a variable or formal symbol, one for each $(n,r)$-sequence $\underline{r},$ and $\mathcal{G}(n,r)$ be the vector space of dimension $\binom {n}{r}$ consisting of all formal linear combination  
of symbols $[\underline{r}]$ with coefficients in a field $\mathbb{K}$ of characteristic $0.$   
The {\em $\mathcal{G}$-invariant} $\mathcal{G}(M)$ and its coefficients $g_{\underline{r}}(M)$ are defined by 
\[
\mathcal{G}(M) = \sum_{\pi}  [\underline{r}(\pi)] = \sum_{\underline{r}} g_{\underline{r}}(M) [\underline{r}],
\]
where the first sum ranges over all $n!$ permutations of $\{1,2,\ldots,n\}.$ 
A {\em specialization} of the $\mathcal{G}$-invariant taking values in an abelian group $\mathbb{A}$ is a function assigning a value in $\mathbb{A}$ to each symbol $[\underline{r}].$  
The $\mathcal{G}$-invariant is fundamental because by a theorem of Derksen and Fink \cite{DerksenFink},   
it is a universal valuative invariant on matroid base polytopes, in the sense that every 
valuative invariant on base polytopes is a specialization of the $\mathcal{G}$-invariant.    

The veteran is the Tutte polynomial.  It is a classical and well-studied object.      
To clarify notation, we recall the definition of the Tutte polynomial: for a rank-$r$ matroid $M$ on a set $S$ with $n$ elements, the Tutte polynomial $T(M)$ and its coefficients $t_{ij}(M)$ are defined by   
\begin{eqnarray*}
T(M) = T(M;x,y) 
& = &  
\sum_{i,j \geq 0}  t_{ij} (M) x^i y^j 
\\
& = &  \sum_{A \subseteq S}  (x-1)^{r - \rk (A)} (y-1)^{|A| - \rk(A)}. 
\end{eqnarray*}
We denote by $\mathcal{T}(n,r)$ the subspace in the algebra $\mathbb{K}[x,y]$ of polynomials in the variables $x$ and $y$ with coefficients in the field $\mathbb{K}$ spanned by the Tutte polynomials of $(n,r)$-matroids.    
We assume a basic acquaintance with the theory of Tutte polynomials (see \cite{BryOx, Handbook} for surveys).    

Derksen \cite{Derksen} showed that there is a specialization sending the $\mathcal{G}$-invariant to the Tutte polynomial.  
The specialization is given explicitly in the following lemma. 

\begin{lemma} \label{specialization}    The assignment $\mathcal{G}(n,r) \to \mathbb{K}[x,y],$
\[
[r_1r_2 \ldots r_n] \mapsto  \sum_{m=0}^n  
\frac {(x-1)^{r - \mathrm{wt}(r_1r_2 \ldots r_m)}(y-1)^{m-\mathrm{wt}(r_1r_2 \ldots r_m)}}{m! (n-m)!} ,
\] 
where the Hamming weight $\mathrm{wt}(r_1r_2 \ldots r_m)$ is the number of $1$'s in the initial segment $r_1r_2 \ldots r_m,$ sends the $\mathcal{G}$-invariant of a matroid to its Tutte polynomial.   
Explicitly, 
\[
T(M) = \frac {1}{n!} \sum_{\pi} \left(  \sum_{m=0}^n \binom {n}{k} (x-1)^{r - \rk(\{\pi(1),\pi(2),\ldots,\pi(m)\})}(y-1)^{m-\rk(\{\pi(1),\pi(2),\ldots,\pi(m)\})} \right).
\]
\end{lemma}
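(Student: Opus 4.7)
The proof is essentially a bookkeeping argument: apply the specialization term-by-term to $\mathcal{G}(M)=\sum_\pi [\underline{r}(\pi)]$ and swap the order of summation so that one sums over subsets of $S$ rather than over permutations.

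First I would note the key identity relating the rank sequence of a permutation to the rank function: if $\underline{r}(\pi)=r_1r_2\ldots r_n$, then by the definition of $\underline{r}(\pi)$, the partial Hamming weight satisfies
\[
\mathrm{wt}(r_1r_2\ldots r_m)=\rk(\{\pi(1),\pi(2),\ldots,\pi(m)\}).
\]
Consequently, applying the stated specialization to $\mathcal{G}(M)$ yields
\[
\sum_{\pi}\sum_{m=0}^{n}\frac{(x-1)^{r-\rk(\pi[m])}(y-1)^{m-\rk(\pi[m])}}{m!\,(n-m)!},
\]
where $\pi[m]:=\{\pi(1),\ldots,\pi(m)\}$. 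Since $\tfrac{1}{m!(n-m)!}=\tfrac{1}{n!}\binom{n}{m}$, this is visibly the ``explicit'' formula displayed in the lemma, so proving the two forms equal is immediate.

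Next I would interchange the sums and aggregate permutations by the initial set $\pi[m]$. For each $m\in\{0,1,\ldots,n\}$ and each subset $A\subseteq S$ with $|A|=m$, the number of permutations $\pi$ with $\pi[m]=A$ is exactly $m!\,(n-m)!$ (an arbitrary ordering of $A$ followed by an arbitrary ordering of $S\setminus A$). Substituting this count and observing the cancellation of $m!(n-m)!$ against the denominator in the specialization gives
\[
\sum_{m=0}^{n}\sum_{\substack{A\subseteq S\\ |A|=m}}(x-1)^{r-\rk(A)}(y-1)^{m-\rk(A)}=\sum_{A\subseteq S}(x-1)^{r-\rk(A)}(y-1)^{|A|-\rk(A)},
\]
which is the subset-expansion definition of $T(M;x,y)$.

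There is no real obstacle here beyond carefully handling the boundary cases $m=0$ and $m=n$ (where $\pi[m]=\emptyset$ or $\pi[m]=S$, giving the terms $(x-1)^{r}$ and $(y-1)^{n-r}$ respectively) and checking that the specialization is well-defined, i.e., depends on $\underline{r}(\pi)$ only through the sequence $r_1r_2\ldots r_n$ and not on $\pi$ itself. Both points follow from the identification $\mathrm{wt}(r_1\ldots r_m)=\rk(\pi[m])$ noted above; once this is in hand, the computation is a one-line swap of summation and the lemma follows.
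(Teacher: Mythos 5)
Your proof is correct: the telescoping identity $\mathrm{wt}(r_1\ldots r_m)=\rk(\{\pi(1),\ldots,\pi(m)\})$, the count $m!\,(n-m)!$ of permutations with a prescribed initial $m$-set, and the resulting cancellation against the denominator recover the subset expansion of $T(M)$ exactly. The paper itself gives no proof of this lemma (it is quoted from Derksen), but your argument is the standard one and is precisely what a proof here should look like; note only that the displayed ``explicit'' formula in the statement has a typo ($\binom{n}{k}$ should read $\binom{n}{m}$), which your rewriting $\tfrac{1}{m!(n-m)!}=\tfrac{1}{n!}\binom{n}{m}$ silently corrects.
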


This specialization extends to a linear transformation $\mathsf{Sp}: \mathcal{G}(n,r) \to \mathbb{K}[x,y].$   
Our objective in this paper is to determine the kernel of $\mathsf{Sp},$ show that its image is $\mathcal{T}(n,r),$ and describe natural bases for $\mathcal{T}(n,r).$       
To do this, we need a partial order on $(n,r)$-sequences.   This order is described in Section \ref{Younglattice}.  With this order, we describe  in Section \ref{szkernel} the syzygies of $\mathsf{Sp},$ that is, linear combinations in $\mathrm{ker}\,\mathsf{Sp}.$  Freedom matroids are introduced in Section \ref{freedom}.  The $\mathcal{G}$-invariants of freedom matroids form a natural basis of $\mathcal{G}(n,r).$  Hence the Tutte polynomials of freedom matroids span $\mathcal{T}(n,r);$  however, they fail to form a basis.  We describe two subsets of freedom matroids whose Tutte polynomials form a basis in Section \ref{Twobases} and a generating set for linear relations on Tutte polynomials of freedom matroids in Section \ref{LR}.  In Section \ref{coefficients},  we use a basis found in Section \ref{Twobases} to give another proof of a theorem of Brylawski \cite{Brylawski} describng a basis for linear relations on coefficients of Tutte polynomials.  
The last two sections are computational.   We give formulas for Tutte polynomials in one of the bases found in Section \ref{Twobases} and as an example, compute explicitly the freedom matroids and their relations in $\mathcal{T}(5,3).$  
%in the appendix.     

\section{A partial order on rank  sequences} \label{Younglattice} 

Let $\mathcal{S}(n,r)$ be the set of $(n,r)$-sequences.  We define the (partial) order $\trianglerighteq$ in the following way. If $\underline{r}$ and $\underline{s}$ are two $(n,r)$-sequences, then 
$\underline{s} \trianglerighteq \underline{r}$ if for every index $j,\, 1 \leq j \leq n,$ 
\[
s_1 + s_2 + \cdots + s_j   \geq   r_1 + r_2 + \cdots + r_j,
\] 
in other words, reading from the left, there are always at least as many $1$'s in $\underline{s}$ as there are in $\underline{r}.$    
Using the notation where $1^a$ stands for a sequence of $a$ (consecutive) $1$'s and $0^b$ a sequence of $b$ $0$'s, this order has maximum $1^r0^{n-r}$ and minimum $0^{n-r}1^r.$     
The partial order $(\mathcal{S}(n,r),\trianglerighteq)$ is a sublattice of Young's (partition) lattice (see, for example, \cite[p.~288]{EC2}).      We shall use $\trianglerighteq$ as the underlying order for ``straightening'' or Gr\"obner basis arguments.  In particular, no esoteric properties of Young's lattice will be used.  

An intuitive way to think of the order $\trianglerighteq$ is to view a sequence $\underline{r}$ as a lattice path from the origin $(0,0)$ to the corner $(r,n-r)$ where a $1$ is a north step and a $0$ is an east step.  Then 
$\underline{s} \trianglerighteq \underline{r}$ 
if and only if as lattice paths, $\underline{r}$ never goes higher than $\underline{s}.$   The lattice paths lie inside the rectangle with opposite corners $(0,0)$ and $(r,n-r).$ 
Tilting the rectangle so that it pirouettes on the corner $(0,0),$ we can also think of a sequence 
$\underline{r}$ as an order ideal on the direct product of a $r$-chain and an $(n-r)$-chain, with $\trianglerighteq$ equal to set-containment $\subseteq \!.$  As order ideals are subsets closed under intersections and unions, $(\mathcal{S}(n,r),\trianglerighteq)$  is a distributive lattice with meet equal to intersection and join equal to union.   

The sequence $\underline{r}$ has an {\em ascent} at position $i$ if $i \geq 2,$ $r_{i-1} = 0,$ and $r_i = 1.$  It has a {\em descent} at $i$ if $i \geq 2,$ $r_{i-1} = 1,$ and $r_i = 0.$  
In $(\mathcal{S}(n,r),\trianglerighteq),$ $\underline{s}$ covers $\underline{r}$ if 
\[
\underline{s} = \underline{r}_1 10 \underline{r}_2, \,\, \underline{r} = \underline{r}_1 01 \underline{r}_2 . 
\] 
An element $j$ in a lattice is a {\em join-irreducible} if $j$ covers at most one element.  In the lattice $(\mathcal{S}(n,r),\trianglerighteq),$ a sequence is join-irreducible if and only if it has at most one descent.  In particular, join-irreducibles are sequences of the form $0^a 1^b 0^c 1^d,$ with $a+b+c+d = n$ and $b+d = r.$    
Working upside-down, an element $m$ in a lattice is a {\em meet-irreducible} if $m$ covers at most one element.  A sequence is meet-irreducible if and only if it has at most one ascent and meet-irreducibles are sequences of the form $1^a 0^b 1^c 0^d,$ with $a+b+c+d =n$ and $a +c = r.$  We denote the set of join-irreducibles (respectively, meet-irreducibles) in $(\mathcal{S}(n,r),\trianglerighteq)$ by $\mathsf{J}$ (respectively, $\mathsf{M}$).  
A simple counting argument gives   
\[
|\mathsf{J}| = r(n-r)+1 = |\mathsf{M}|. 
\] 

By definition, $g_{\underline{r}}(M) \geq 0.$  The {\sl support} $\mathrm{supp}(M)$ of an $(n,r)$-matroid $M$ is the set $\{\underline{r}: g_{\underline{r}}(M) > 0 \}$.  

\begin{lemma}\label{support}
The support of an $(n,r)$-matroid $M$ is an order filter in $(\mathcal{S}(n,r),\trianglerighteq).$
\end{lemma}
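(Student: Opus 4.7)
The plan is to show directly that $\mathrm{supp}(M)$ is closed upward under covers in $(\mathcal{S}(n,r), \trianglerighteq)$. Since this poset is a lattice whose order is the transitive closure of its cover relation (described explicitly just before the lemma), this suffices to conclude that $\mathrm{supp}(M)$ is an order filter.

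So suppose $\underline{r} \in \mathrm{supp}(M)$ and $\underline{s}$ covers $\underline{r}$, i.e.\ there is an index $j$ with $r_j r_{j+1} = 01$, $s_j s_{j+1} = 10$, and $r_i = s_i$ for all other $i$. By assumption, we can fix a permutation $\pi$ whose rank sequence is $\underline{r}$. I would then define $\pi'$ to be $\pi$ with the values at positions $j$ and $j+1$ swapped, and verify that $\underline{r}(\pi') = \underline{s}$. Since $\pi'$ agrees with $\pi$ in positions $1, \ldots, j-1$ and in positions $j+2, \ldots, n$ (as a set at each stage), the only entries of the rank sequence that can change are those at positions $j$ and $j+1$, and the rank of the full initial segment of length $j+1$ is unchanged.

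The core computation is this: write $A = \{\pi(1), \ldots, \pi(j-1)\}$. From $r_j = 0$ we get $\pi(j) \in \cl(A)$, so $\cl(A \cup \{\pi(j)\}) = \cl(A)$; then $r_{j+1} = 1$ forces $\pi(j+1) \notin \cl(A)$. Under $\pi'$, the element at position $j$ is $\pi(j+1) \notin \cl(A)$, giving an increment of $1$, and the element at position $j+1$ is $\pi(j) \in \cl(A) \subseteq \cl(A \cup \{\pi(j+1)\})$, giving an increment of $0$. Hence $\underline{r}(\pi')$ has $10$ in positions $j, j+1$ and otherwise agrees with $\underline{r}$, so $\underline{r}(\pi') = \underline{s}$ and $g_{\underline{s}}(M) \geq 1$.

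I do not anticipate any real obstacle: the only place where a matroid-theoretic argument is needed is the short closure computation in the previous paragraph, and the rest is bookkeeping about the cover relation and about which coordinates of the rank sequence are affected by swapping two adjacent values of a permutation.
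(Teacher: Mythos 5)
Your proof is correct and is essentially the same as the paper's: both reduce to the cover relation and obtain $\underline{s}$ as the rank sequence of the permutation with the two relevant adjacent values swapped. The paper simply asserts that the swapped permutation has rank sequence $\underline{s}$, while you spell out the closure computation justifying it; no substantive difference.
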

\begin{proof} 
It suffices to show that if $\underline{s}$ covers $\underline{r},$ and $\underline{r} \in \mathrm{supp}(M),$ then 
$\underline{s} \in \mathrm{supp}(M).$  To do this, let $\underline{r}=\underline{r}_1 01 \underline{r}_2,$  and $\underline{s}=\underline{r}_1 10 \underline{r}_2,$ where $\underline{r}_1$ has length $\lambda.$   
As $g_{\underline{r}}(M) > 0,$  it is the rank sequence of a permutation $i_1 i_2 \ldots i_n$ (in one-line notation).  Then $\underline{s}$ is the rank sequence of the permutation $i_1 i_2 \ldots i_{\lambda} i_{\lambda+2}i_{\lambda+1} \ldots i_n$ and we conclude that $g_{\underline{s}}(M) \geq 1.$  
\end{proof}

\section{Syzygies for $\ker \mathsf{Sp}$}  \label{szkernel} 

The cornerstone of our theory is the following  lemma.  

\begin{lemma} \label{cornerstone}  
Let $\underline{r}$ be a $(\lambda,\rho)$-sequence and $\underline{s}$ a bit sequence such that $\underline{r} 01 \underline{s}$ is an $(n,r)$-sequence.  Then   
\[ 
\mathsf{Sp} ( [\underline{r} 10 \underline{s}] - [\underline{r} 01 \underline{s}])   
=  
\frac {(x-1)^{r- \rho -1} (x + y - xy)  (y-1)^{\lambda - \rho - 1}}{(\lambda + 1)!(n-\lambda -1)!}.
\] 
\end{lemma}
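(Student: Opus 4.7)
The plan is to apply the specialization $\mathsf{Sp}$ from Lemma \ref{specialization} to each of $[\underline{r}10\underline{s}]$ and $[\underline{r}01\underline{s}]$ term by term, and exploit the fact that these two sequences agree everywhere except on positions $\lambda+1$ and $\lambda+2$. The specialization writes each symbol as a sum over $m\in\{0,1,\ldots,n\}$ whose $m$th summand depends on the sequence only through the Hamming weight $w_m$ of its length-$m$ initial segment, so whenever the two sequences produce the same $w_m$ the corresponding summands are identical and cancel in the difference.

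The first step is the cancellation argument. For $m\le\lambda$ the two initial segments are literally the same, so those summands cancel. For $m\ge\lambda+2$ both length-$m$ initial segments consist of $\underline{r}$, followed by one $1$ and one $0$ (in some order) at positions $\lambda+1,\lambda+2$, followed by the same prefix of $\underline{s}$; hence their weights agree and these summands cancel as well. The only index that fails to cancel is $m=\lambda+1$, where the length-$(\lambda+1)$ initial segment of $\underline{r}10\underline{s}$ has weight $\rho+1$, while that of $\underline{r}01\underline{s}$ has weight $\rho$.

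The second step is to substitute these two weights into the formula of Lemma \ref{specialization} and simplify the difference. The surviving contribution is a two-monomial expression over the denominator $(\lambda+1)!(n-\lambda-1)!$; pulling out the largest common power of $(x-1)$ and the largest common power of $(y-1)$ from the two monomials leaves a bracket of the form $1-(x-1)(y-1)$, which equals $x+y-xy$. Reassembling the prefactors gives exactly the claimed right-hand side.

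No step is genuinely difficult: the whole proof reduces to bookkeeping of initial-segment weights under the local swap $01\leftrightarrow10$, together with the elementary identity $1-(x-1)(y-1)=x+y-xy$. The content of the lemma is really the observation that any such local swap produces a universal scalar multiple of the factor $x+y-xy$, which is the fuel for the weak-order syzygies studied in the rest of the paper.
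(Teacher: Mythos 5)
Your proof is correct and is essentially the same as the paper's: both isolate the single surviving summand at $m=\lambda+1$ in the specialization formula of Lemma \ref{specialization} (all other summands cancel because the initial-segment weights agree) and then factor the difference of the two monomials using $1-(x-1)(y-1)=x+y-xy$. One small caveat: the computation actually yields the exponent $(y-1)^{\lambda-\rho}$ rather than the $(y-1)^{\lambda-\rho-1}$ printed in the statement (the statement has an off-by-one typo, as one can check against Lemma \ref{TPPaving}, where $\lambda=\rho=r-1$ would otherwise produce a negative exponent), so your closing claim that the prefactors reassemble to \emph{exactly} the claimed right-hand side silently passes over this discrepancy.
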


\begin{proof} We use Lemma \ref{specialization}, noting that the summands in 
$\mathsf{Sp} ([\underline{r} 10 \underline{s}])$ and $\mathsf{Sp} ( [\underline{r} 01 \underline{s}])$ are the same except at 
$m = \lambda + 1.$  When $m = \lambda + 1,$  the summands for $\mathsf{Sp} ( [\underline{r} 10 \underline{s}])$ and  $\mathsf{Sp} ( [\underline{r} 01 \underline{s}])$ are 
\[
\frac {(x-1)^{r- \rho -1} (y-1)^{\lambda - \rho}}{(\lambda + 1)!(n-\lambda -1)!} \quad \mathrm{and} \quad
\frac {(x-1)^{r- \rho}   (y-1)^{\lambda - \rho+1}}{(\lambda + 1)!(n-\lambda -1)!}
\]   
and the lemma follows.  
\end{proof} 

An interval $I$ of height $2$  in $(\mathcal{S}(n,r),\trianglerighteq)$ 
has the form shown in Figure \ref{fig:interval}, 
\showhide{
\begin{figure}
  \centering
  \begin{tikzpicture}[scale=1]
%  \node[inner sep = 0.3mm] (l) at (4.5,1.8) {\small $11000$};
%  \node[inner sep = 0.3mm] (b) at (5.3,1.1) {\small $10100$};  
%\foreach \from/\to in {b/l} \draw(\from)--(\to);  
%
\node[inner sep = 0.3mm] (b) at (7.75,-0.5) {\small $\underline{r}_1 01 \underline{r}_2 01 \underline{r}_3$};
  \node[inner sep = 0.3mm] (l) at (6.5,0.3) {\small $\underline{r}_1 01 \underline{r}_2 10\underline{r}_3$};
  \node[inner sep = 0.3mm] (r) at (9.25,0.3) {\small $\underline{r}_110\underline{r}_2 01\underline{r}_3$};
  \node[inner sep = 0.3mm] (t) at (7.75,1.2) {\small $\underline{r}_110\underline{r}_2 10\underline{r}_3$};
  \foreach \from/\to in {b/l,b/r,t/l,t/r} \draw(\from)--(\to);
  \end{tikzpicture}
 \caption{Intervals of height $2.$ } 
 \label{fig:interval}
\end{figure}
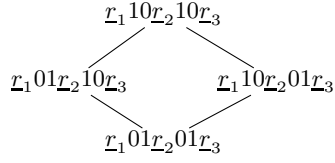
}
where $\underline{r}_1 10\underline{r}_2 10\underline{r}_3$ is an $(n,r)$-sequence.
We associate with $I$ the linear combination $\mathsf{sz}(I)$ defined by  
\[
\mathsf{sz}(I) = 
[\underline{r}_110\underline{r}_2 10\underline{r}_3]-[\underline{r}_1 10\underline{r}_2  01\underline{r}_3]
-[\underline{r}_1 01 \underline{r}_2 10 \underline{r}_3]+ [\underline{r}_1  01\underline{r}_2  01\underline{r}_3].    
\]

\begin{lemma} \label{kersp}  
For every height-$2$ interval $I$ in $(\mathcal{S}(n,r),\trianglerighteq),$ the linear combination $\mathsf{sz}(I)$ 
is in the kernel of $\mathsf{Sp}.$   
\end{lemma}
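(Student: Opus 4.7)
The plan is to exhibit $\mathsf{sz}(I)$ as a difference of two Lemma~\ref{cornerstone}-type pairs that map to the same polynomial under $\mathsf{Sp}$, so the net image is zero.

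First I would group the four terms along the ``inner'' swap:
\[
\mathsf{sz}(I)
= \bigl([\underline{r}_1 10\underline{r}_2 10\underline{r}_3] - [\underline{r}_1 10\underline{r}_2 01\underline{r}_3]\bigr)
- \bigl([\underline{r}_1 01\underline{r}_2 10\underline{r}_3] - [\underline{r}_1 01\underline{r}_2 01\underline{r}_3]\bigr).
\]
Each parenthesized expression has the shape $[\underline{r}\,10\,\underline{s}] - [\underline{r}\,01\,\underline{s}]$ required by Lemma~\ref{cornerstone}: for the first pair the prefix is $\underline{r} = \underline{r}_1 10 \underline{r}_2$ with suffix $\underline{s} = \underline{r}_3$; for the second pair the prefix is $\underline{r} = \underline{r}_1 01 \underline{r}_2$ with the same suffix $\underline{s} = \underline{r}_3$.

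Next I would apply Lemma~\ref{cornerstone} to each pair. Let $\underline{r}_1$ have length $a$ and weight $\alpha$, and let $\underline{r}_2$ have length $b$ and weight $\beta$. Then both prefixes $\underline{r}_1 10 \underline{r}_2$ and $\underline{r}_1 01 \underline{r}_2$ share the length $\lambda = a+b+2$ and the weight $\rho = \alpha+\beta+1$. Hence Lemma~\ref{cornerstone} produces the \emph{same} rational function
\[
\frac{(x-1)^{r-\rho-1}(x+y-xy)(y-1)^{\lambda-\rho-1}}{(\lambda+1)!\,(n-\lambda-1)!}
\]
for each of the two pairs, and the difference is zero. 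This gives $\mathsf{Sp}(\mathsf{sz}(I)) = 0$ as required.

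There is essentially no obstacle here beyond the bookkeeping of the length and weight parameters of the two middle blocks $10$ and $01$; the key observation, which makes the calculation trivial, is that swapping $10$ for $01$ in the prefix changes neither $\lambda$ nor $\rho$, so the two invocations of Lemma~\ref{cornerstone} produce identical outputs. Thus the proof is a one-line reduction to Lemma~\ref{cornerstone} after the appropriate regrouping.
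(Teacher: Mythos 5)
Your proof is correct and is essentially the paper's own argument: the paper likewise cancels the Lemma~\ref{cornerstone} differences on ``opposite'' sides of the diamond, which works precisely because swapping $10$ for $01$ inside the prefix changes neither its length $\lambda$ nor its weight $\rho$, so the two invocations of Lemma~\ref{cornerstone} return the same polynomial. You have simply made the length/weight bookkeeping explicit where the paper leaves it implicit.
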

\begin{proof} 
%Let $\underline{r}$ be a $(\lambda,\rho)$-sequence and $\underline{s}$ a bit sequence such that $\underline{r} 01 %\underline{s}$ is an $(n,r)$-sequence.  Then 
By Lemma \ref{cornerstone},  
%\[ 
%\mathsf{Sp} ( [\underline{r} 10 \underline{s}] - [\underline{r} 01 \underline{s}])   
%=  
%\frac {(x-1)^{r- \rho -1} (x + y - xy)  (y-1)^{\lambda - \rho - 1}}{(\lambda + 1)!(n-\lambda -1)!}.
%\] 
%It follows that 
the differences
$\mathsf{Sp} ( [\underline{r} 10 \underline{s}] - [\underline{r} 01 \underline{s}])$ on ``opposite'' sides of the height-$2$ interval $I$ cancel and $\mathsf{Sp}(\mathsf{sz}(I)) = 0.$     
\end{proof}

We note that since $(\mathcal{S}(n,r),\trianglerighteq)$ is distributive, the sublattice generated by the atoms in an interval of height $k$ is a Boolean algebra and give rise to a linear combination with $2^k$ terms analogous to the $4$-term linear combinations $\mathsf{sz}(I).$   The larger linear combinations are also in $\ker \mathsf{Sp}.$  An easy argument shows that they are linear combinations of $4$-term linear combinations $\mathsf{sz}(I).$    

Let $\mathcal{K}$ be the linear subspace in $\mathcal{G}(n,r)$ spanned by the linear combinations $\mathsf{sz}(I).$   If $\underline{r}$ is meet-reducible (that is, not meet-irreducible), then it  has two or more ascents and is the minimum of a height-$2$ interval.  
Hence, $[\underline{r}]$ can be written, modulo $\mathcal{K},$ as a linear combination with integer coefficients of symbols $[\underline{s}],$ where 
$\underline{s} \triangleright \underline{r}.$   Repeating this argument, we conclude that in the quotient $\mathcal{G}(n,r)/\mathcal{K},$ every symbol $[\underline{r}]$ can be written as an integral linear combination  of symbols of meet-irreducibles, that is,    
\[
[\underline{r}] = \sum_{\underline{m} \trianglerighteq \underline{r},\, \underline{m} \in \mathsf{M}}  \alpha_{\underline{m}}  [\underline{m}]  \qquad \mathrm{mod} \,\,\mathcal{K},   
\eqno(3.1)\]
where the coefficients $\alpha_{\underline{m}}$ are integers.   Working in the opposite way, one derives analogous assertions for join-reducibles and irreducibles.  

\begin{lemma} \label{spankersp}  
 The symbols $[\underline{m}],\, \underline{m} \in \mathsf{M},$ span the quotient $\mathcal{G}(n,r)/\mathcal{K}.$   The symbols, $[\underline{j}],\,\underline{j} \in \mathsf{J},$ span $\mathcal{G}(n,r)/\mathcal{K}.$   In particular, 
\[
\dim \mathcal{G}(n,r)/\mathcal{K} \,\,\leq\,\, r(n-r) + 1.  
\]
\end{lemma}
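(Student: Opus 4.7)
The paragraph preceding the statement essentially lays out the core move, so my plan is to turn it into a clean induction on the partial order $\trianglerighteq$ and then read off the dimension bound from $|\mathsf{M}|=|\mathsf{J}|=r(n-r)+1$.

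For the meet-irreducible spanning claim, I would induct on position in $\trianglerighteq$, going downward from the maximum $1^r0^{n-r}$. If $\underline{r}\in \mathsf{M}$ there is nothing to do. Otherwise $\underline{r}$ has at least two ascents, so I can choose a decomposition $\underline{r}=\underline{r}_1\,01\,\underline{r}_2\,01\,\underline{r}_3$ which exhibits $\underline{r}$ as the minimum of a height-$2$ interval $I$ as in Figure~\ref{fig:interval}. Lemma~\ref{kersp} gives $\mathsf{sz}(I)\in\mathcal{K}$, so solving for $[\underline{r}]$ yields
\[
[\underline{r}]\;\equiv\;[\underline{r}_1\,10\,\underline{r}_2\,01\,\underline{r}_3]+[\underline{r}_1\,01\,\underline{r}_2\,10\,\underline{r}_3]-[\underline{r}_1\,10\,\underline{r}_2\,10\,\underline{r}_3]\pmod{\mathcal{K}}.
\]
Each of the three sequences on the right strictly dominates $\underline{r}$ in $\trianglerighteq$, so the inductive hypothesis applies to each of them, producing the required expansion (3.1) with integer coefficients on the meet-irreducibles above $\underline{r}$. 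Well-foundedness is automatic since $\mathcal{S}(n,r)$ is finite.

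The join-irreducible claim is handled by the dual argument. If $\underline{r}$ is join-reducible it has two or more descents, so $\underline{r}=\underline{r}_1\,10\,\underline{r}_2\,10\,\underline{r}_3$ and $\underline{r}$ is the \emph{maximum} of a height-$2$ interval. The same syzygy, solved in the opposite direction, expresses $[\underline{r}]$ modulo $\mathcal{K}$ as an integer combination of the three symbols strictly below $\underline{r}$ in $I$, and induction upward from the minimum $0^{n-r}1^r$ (which is join-irreducible) finishes the reduction to join-irreducibles.

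The dimension bound is then immediate: each of the two spanning sets exhibited has cardinality $r(n-r)+1$, so $\dim\mathcal{G}(n,r)/\mathcal{K}\leq r(n-r)+1$. There is no real obstacle here; the only thing to keep honest is that the decomposition we choose really does yield a legal height-$2$ interval in $\mathcal{S}(n,r)$, but this is automatic from the definition of ascent/descent and the covering relations, so the argument is purely a straightening/reduction on a finite poset.
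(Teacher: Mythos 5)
Your proof is correct and takes essentially the same route as the paper: the paragraph preceding the lemma performs exactly this reduction, observing that a meet-reducible (resp.\ join-reducible) sequence is the minimum (resp.\ maximum) of a diamond-shaped height-$2$ interval so that $\mathsf{sz}(I)$ rewrites its symbol modulo $\mathcal{K}$ as an integral combination of strictly larger (resp.\ smaller) symbols, and the bound follows from $|\mathsf{J}|=|\mathsf{M}|=r(n-r)+1$. You have merely made the finite-poset induction and the choice of the two disjoint ascents/descents explicit.
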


We end this section with another consequence of Lemma \ref{cornerstone}

\begin{prop}\label{xy}
Let $M$ and $N$ be $(n,r)$-matroids.  Then $x+y-xy$ divides the difference $T(M)-T(N)$ of their Tutte polynomials. 
\end{prop}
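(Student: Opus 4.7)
The plan is to reduce $T(M)-T(N)$ to a linear combination of the elementary differences treated in Lemma \ref{cornerstone}, each of which is visibly divisible by $x+y-xy$, and then use a connectedness property of the order $\trianglerighteq$ to express the difference of $\mathcal{G}$-invariants in terms of covers.

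First, since $\mathsf{Sp}$ is linear and sends $\mathcal{G}(M)$ to $T(M)$, I would write
\[
T(M)-T(N) = \mathsf{Sp}\!\left(\sum_{\underline{r}} c_{\underline{r}}\,[\underline{r}]\right), \qquad c_{\underline{r}} := g_{\underline{r}}(M)-g_{\underline{r}}(N).
\]
The crucial observation is that $\sum_{\underline{r}} c_{\underline{r}} = 0$: indeed $g_{\underline{r}}(M)$ counts permutations of $\{1,\ldots,n\}$ having rank sequence $\underline{r}$, so $\sum_{\underline{r}} g_{\underline{r}}(M) = n! = \sum_{\underline{r}} g_{\underline{r}}(N)$.

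Next, because $(\mathcal{S}(n,r),\trianglerighteq)$ is a lattice, it is connected as a cover graph. Fix a reference sequence, say $\underline{r}^\ast = 1^r 0^{n-r}$, and for each $\underline{r}$ choose a saturated chain $\underline{r}=\underline{u}_0 \triangleleft \underline{u}_1 \triangleleft \cdots \triangleleft \underline{u}_k=\underline{r}^\ast$. Each cover step is exactly of the form given in Lemma \ref{cornerstone}: $\underline{u}_{i}$ differs from $\underline{u}_{i-1}$ by replacing a block $01$ by $10$ at some position, so we may write $\underline{u}_{i-1}=\underline{r}'\,01\,\underline{s}'$ and $\underline{u}_i=\underline{r}'\,10\,\underline{s}'$ for suitable prefix and suffix. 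Telescoping along each chain yields
\[
[\underline{r}]-[\underline{r}^\ast] = \sum_{i=1}^k \bigl([\underline{u}_{i-1}]-[\underline{u}_i]\bigr),
\]
and using $\sum_{\underline{r}} c_{\underline{r}} = 0$ I can subtract $[\underline{r}^\ast]$ freely to get
\[
\sum_{\underline{r}} c_{\underline{r}}\,[\underline{r}] = \sum_{\underline{r}} c_{\underline{r}}\bigl([\underline{r}]-[\underline{r}^\ast]\bigr),
\]
which is a linear combination of cover differences of the form $[\underline{r}'\,01\,\underline{s}']-[\underline{r}'\,10\,\underline{s}']$.

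Finally, Lemma \ref{cornerstone} shows that $\mathsf{Sp}$ sends each such cover difference to an expression in which $x+y-xy$ appears as an explicit factor. Applying $\mathsf{Sp}$ to the displayed combination and factoring out $x+y-xy$ from every term gives the divisibility claim. The only real content beyond Lemma \ref{cornerstone} is the coefficient-sum-zero observation; once that is in hand, the rest is routine telescoping along cover chains in $(\mathcal{S}(n,r),\trianglerighteq)$.
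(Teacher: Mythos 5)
Your proof is correct and follows essentially the same route as the paper: Lemma \ref{cornerstone} handles cover differences, connectedness of the Hasse diagram of $(\mathcal{S}(n,r),\trianglerighteq)$ extends this to arbitrary pairs, and the fact that $\sum_{\underline{r}} g_{\underline{r}}(M) = n! = \sum_{\underline{r}} g_{\underline{r}}(N)$ lets one write $\mathcal{G}(M)-\mathcal{G}(N)$ as a combination of pairwise differences. The only cosmetic difference is that you make the last step explicit via the coefficient-sum-zero observation and a fixed reference sequence, whereas the paper pairs the $n!$ permutation terms of $M$ and $N$ directly.
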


\begin{proof}
By Lemma \ref{cornerstone}, if $\underline{r}$ covers $\underline{s},$ or $\underline{r}$ is covered by $\underline{s}$ in $(\mathcal{S}(n,r),\trianglerighteq),$  
then $x+y-xy$ divides $\mathsf{Sp}([\underline{r}] - [\underline{s}]).$  Since the Hasse or covering diagram of   $(\mathcal{S}(n,r),\trianglerighteq)$ is connected, $x+y-xy$ divides $\mathsf{Sp}([\underline{r}] - [\underline{s}])$ for any pair of $(n,r)$-sequences.  Now observe that the difference $\mathcal{G}(M) - \mathcal{G}(N)$ is a sum of differences 
$[\underline{r}] - [\underline{s}].$  From this, we conclude that $T(M)-T(N)$ is a sum of differences 
$\mathsf{Sp}([\underline{r}] - [\underline{s}])$ and hence, $x+y-xy$ divides $T(M)-T(N).$   
\end{proof}

Proposition \ref{xy} implies that for points $(\alpha,\beta)$ on the curve $x+y-xy=0, $ the value of 
$  T(M;\alpha, \beta)  $ is constant on $(n,r)$-matroids $M.$     
In fact, if $M$ is an $(n,r)$-matroid and $\alpha \neq 1,$ $ T(M;\alpha, \beta) = \alpha^n  (\alpha -1)^{r-n}.$   

\section{Freedom matroids}  \label{freedom} 

Let $\underline{s}$ be an $(n,r)$-sequence and $b_1,b_2, \ldots,b_r$ be the positions where $1$'s occur in $\underline{s},$ arranged so that $b_1 < b_2 < \cdots < b_r.$ 
The {\em freedom matroid} $F(\underline{s})$ with {\sl defining sequence} $\underline{s}$  
is the matroid on the set $\{1,2,\ldots,n\}$   
in which 
\begin{enumerate} 
\item the elements $1,2, \ldots,b_1 - 1$ are loops (that is, in the closure $\mathrm{cl}(\emptyset)$), 
\item for $1 \leq j \leq r-1,$  $b_j$ is added as an isthmus and the elements $b_j, b_j + 1, b_j + 2, \ldots,b_{j+1}-1$ are freely positioned in  
$\mathrm{cl}(\{b_1,b_2, \ldots, b_{j}\}),$ and 
\item $b_r$ is added as an isthmus and $b_r,b_r+1, \ldots, n$ are freely positioned in the entire matroid. 
\end{enumerate}
The freedom matroid $F(\underline{s})$ has a distinguished flag (or maximal chain) of flats $X_0 \subset   X_1  \subset   \cdots   \subset   X_r,$  where $X_i = \{1,2,\ldots,b_{i+1} - 1\}$ for $0 \leq j \leq r-1,$ and 
$X_r=  \{1,2,\ldots,n\}.$  
Freedom matroids were first defined by Crapo in \cite{MR0190045};  they have been rediscovered many times and are also known 
as nested, counting, or Schubert matroids.  

The following lemma is immediate from the definition.  

\begin{lemma}\label{basis} 

(a) Let $Y$ be a rank-$i$ flat in $ F(\underline{s}).$  Then $|Y| \leq |X_i|.$   

\noindent
(b) A set $B$ in $\{1,2,\ldots,n\}$ is 
a basis of the freedom matroid $F(\underline{s})$ if and only if the $(n,r)$-sequence $\underline{r},$ defined by $r_j = 1$ if $j \in B,$ 
satisfies $\underline{s} \trianglerighteq \underline{r}.$ 
\end{lemma}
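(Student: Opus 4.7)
My plan is to handle part (b) first and then deduce part (a) by a parallel inductive argument exploiting the construction.

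For (b), I would first translate $\underline{s} \trianglerighteq \underline{r}$ into the matroid-theoretic condition $j_i \geq b_i$ for each $i = 1, \ldots, r$, where $j_1 < \cdots < j_r$ are the positions of the $1$'s in $\underline{r}$. This is immediate: the partial sum of $\underline{s}$ only jumps at the positions $b_1, \ldots, b_r$, so the partial-sum inequality need only be checked just before those jumps. Next I would prove the stronger statement that $A \subseteq \{1, \ldots, n\}$ is independent in $F(\underline{s})$ if and only if $|A \cap X_i| \leq i$ for all $i$. The forward direction uses only $\rk(X_i) = i$. For the converse I would induct on $|A|$: let $e = \max A$, and let $k$ be the unique index with $b_k \leq e < b_{k+1}$ (taking $b_{r+1} = n+1$), so that by construction $e$ is placed freely in $X_k$. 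Maximality of $e$ gives $A \subseteq X_k$, and hence $|A| = |A \cap X_k| \leq k$. By induction, $A \setminus \{e\}$ is independent of size $< k$, so has rank less than $\rk(X_k) = k$ and cannot span $X_k$; freeness of $e$ in $X_k$ then gives $e \notin \cl(A \setminus \{e\})$, so $A$ is independent. Specializing to $|A| = r$ produces (b).

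For (a) I would induct on $r$. The base $r = 0$ is immediate. For the inductive step, given a rank-$i$ flat $Y$, either $Y \subseteq X_{r-1}$ or $Y$ contains some element of $\{b_r, b_r+1, \ldots, n\}$. In the first subcase, $Y$ is a flat of the sub-freedom matroid $F(\underline{s})|X_{r-1}$, whose defining sequence is the prefix $s_1 \cdots s_{b_r-1}$ and whose $X_i$ (for $i < r$) agrees with that of $F(\underline{s})$; the inductive hypothesis gives $|Y| \leq |X_i|$. In the second subcase, let $s$ denote the number of elements of $Y$ in $\{b_r, \ldots, n\}$, so $s \geq 1$. Because each such element is placed freely in $X_r$, it lies outside the closure of any set of rank $< r$, so adjoining these $s$ elements to $Y' := Y \cap X_{r-1}$ raises the rank by exactly $s$; hence $Y'$ is a rank-$(i-s)$ flat of $F(\underline{s})|X_{r-1}$. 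Induction gives $|Y'| \leq |X_{i-s}|$, so
\[
|Y| = |Y'| + s \leq |X_{i-s}| + s \leq |X_i|,
\]
the last inequality because $|X_i| - |X_{i-s}| = b_{i+1} - b_{i-s+1} \geq s$.

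The main obstacle is rigorously justifying the rank computation $\rk(Y') = i - s$ in the second subcase of (a): one must verify that the $s$ elements of $Y \cap \{b_r, \ldots, n\}$ remain independent when adjoined to $Y'$ and that each lies outside the closure of the previous ones. This rests on the freeness of those elements in $X_r$, which is the substantive content of the construction of $F(\underline{s})$; once it is invoked, both halves of the lemma follow from the two inductions above.
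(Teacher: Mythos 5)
Your proof is correct. The paper states this lemma without proof (``immediate from the definition''), so there is no argument of the author's to compare against; your write-up supplies the details. The substantive step --- that $A$ is independent in $F(\underline{s})$ if and only if $|A\cap X_i|\le i$ for all $i$, proved by peeling off $\max A$ and invoking the freeness of that element in $X_k$ --- is the standard characterization of independent sets of a nested/Schubert matroid, and part (b) follows from it together with your (correct) translation of $\underline{s}\trianglerighteq\underline{r}$ into $b_i\le j_i$ for all $i$. The induction for part (a) also goes through: the only point you pass over silently is the edge case $i=r$, where the claim $\rk(Y')=i-s$ could fail because the free elements may stop adding rank once the set spans; but there $Y$ is a spanning flat, hence all of $\{1,\dots,n\}$, and $|Y|=n=|X_r|$ holds trivially, so nothing is lost.
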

 
By Lemma \ref{basis}(b), if $\underline{s}_1 \trianglerighteq \underline{s}_2,$ then every basis of $F(\underline{s}_2)$
is a basis of $F(\underline{s}_1);$  in other words, $F(\underline{s}_1) \geq_w F(\underline{s}_2),$ where $\geq_w$ 
is the weak order on $(n,r)$-matroids.     

\begin{cor} 
The weak order on rank-$r$ freedom matroids on $\{1,2,\ldots,n\}$ is isomorphic to $(\mathcal{S}(n,r),\trianglerighteq).$ 
\end{cor}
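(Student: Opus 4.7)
The plan is to leverage Lemma \ref{basis}(b) directly, since it already characterizes the basis family of $F(\underline{s})$ in terms of the order $\trianglerighteq$. The forward direction (order-preservation) is recorded in the paragraph just before the corollary: if $\underline{s}_1 \trianglerighteq \underline{s}_2$, then any basis $B$ of $F(\underline{s}_2)$ corresponds via Lemma \ref{basis}(b) to a sequence $\underline{r}$ with $\underline{s}_2 \trianglerighteq \underline{r}$; transitivity of $\trianglerighteq$ gives $\underline{s}_1 \trianglerighteq \underline{r}$, so $B$ is a basis of $F(\underline{s}_1)$ as well, yielding $F(\underline{s}_1) \geq_w F(\underline{s}_2)$.

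For the converse, I would use the distinguished basis trick. The sequence $\underline{s}_2$ itself satisfies $\underline{s}_2 \trianglerighteq \underline{s}_2$, so by Lemma \ref{basis}(b) the set $B_0 = \{j : (\underline{s}_2)_j = 1\}$ is a basis of $F(\underline{s}_2)$. If $F(\underline{s}_1) \geq_w F(\underline{s}_2)$, then $B_0$ must also be a basis of $F(\underline{s}_1)$, and a second application of Lemma \ref{basis}(b) (this time inside $F(\underline{s}_1)$) gives $\underline{s}_1 \trianglerighteq \underline{s}_2$. Thus $\underline{s}_1 \trianglerighteq \underline{s}_2$ if and only if $F(\underline{s}_1) \geq_w F(\underline{s}_2)$.

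Finally, injectivity of the assignment $\underline{s} \mapsto F(\underline{s})$ follows from the antisymmetry of the weak order: if $F(\underline{s}_1) = F(\underline{s}_2)$ then $F(\underline{s}_1) \geq_w F(\underline{s}_2)$ and $F(\underline{s}_2) \geq_w F(\underline{s}_1)$, which by the equivalence just established forces $\underline{s}_1 = \underline{s}_2$. Combined with the order equivalence, this shows that $\underline{s} \mapsto F(\underline{s})$ is an order isomorphism from $(\mathcal{S}(n,r),\trianglerighteq)$ onto the poset of rank-$r$ freedom matroids on $\{1,2,\ldots,n\}$ under $\geq_w$.

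There is no real obstacle here; the content is all in Lemma \ref{basis}(b), and the only ``trick'' is to notice that $\underline{s}$ is always a basis sequence of its own freedom matroid, so that the inequality $F(\underline{s}_1) \geq_w F(\underline{s}_2)$ can be tested against this single distinguished basis.
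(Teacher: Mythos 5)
Your proof is correct and rests on the same ingredient the paper uses, namely Lemma \ref{basis}(b); the paper only records the forward implication ($\underline{s}_1 \trianglerighteq \underline{s}_2$ implies $F(\underline{s}_1) \geq_w F(\underline{s}_2)$) in the paragraph preceding the corollary and leaves the rest implicit. Your ``distinguished basis'' observation --- that $\underline{s}_2$ is itself the indicator sequence of a basis of $F(\underline{s}_2)$, so the converse and injectivity follow at once --- is exactly the natural completion of that argument.
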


\showhide{
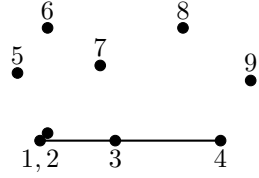
\begin{figure}
  \centering
  \begin{tikzpicture}[scale=1]
%  \draw [help lines] (0,0) grid (9,4);
  \filldraw (-.3,.9) node[above] {$5$} circle  (2pt);
  \filldraw (0.1,1.5) node[above] {$6$} circle  (2pt);
  \filldraw (0.8,1.0) node[above] {$7$} circle  (2pt); \filldraw (1.9,1.5) node[above] {$8$} circle  (2pt); \filldraw (2.8,.8) node[above] {$9$} circle  (2pt);
  \filldraw (0,0) node[below] {$1,2$} circle  (2pt);
  \filldraw (0.1,0.1) node[below] {} circle  (2pt);
    \filldraw (1,0) node[below] {$3$\rule{0pt}{6.5pt}} circle  (2pt);
  \filldraw (2.4,0) node[below] {$4$} circle  (2pt);
  \draw[thick](0,0)--(2.4,0);
  %\draw[thick](0,1)--(2,1);
  %\node at (1,-0.75) {$M$};
 \end{tikzpicture}
  \caption{The freedom matroid $F(101010000).$  }
  \label{fig:freedommatroid}
\end{figure}
}

We turn now to $\mathcal{G}$-invariants of freedom matroids.    

\begin{lemma}\label{terms} 

(a)  If $g_{\underline{r}}(F(\underline{s})) \neq 0,$ then $\underline{r} \trianglerighteq \underline{s}.$   

\noindent
(b)  The coefficient $g_{\underline{s}} (F(\underline{s}))$ is non-zero.   
\end{lemma}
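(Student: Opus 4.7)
The plan is to derive both parts from a single observation about $F(\underline{s})$: among all $j$-subsets of $\{1,2,\ldots,n\}$, the initial segment $\{1,2,\ldots,j\}$ attains the minimum rank, and that rank equals the partial sum $s_1+s_2+\cdots+s_j$. Once this is in place, part (a) falls out directly from the definition of $\trianglerighteq$, and part (b) follows by exhibiting the identity permutation.

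First I would compute $\rk(\{1,\ldots,j\})$ in $F(\underline{s})$ directly from the construction. With the conventions $b_0=0$ and $b_{r+1}=n+1$, locate the unique $i$ with $b_i \leq j < b_{i+1}$. Then $\{1,\ldots,j\} \subseteq X_i$, which is a flat of rank $i$, while $\{b_1,\ldots,b_i\} \subseteq \{1,\ldots,j\}$ is independent, so $\rk(\{1,\ldots,j\})=i$. On the other hand, counting $1$'s in $\underline{s}$ up through position $j$ gives the same integer $i = s_1+\cdots+s_j$.

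The minimum-rank claim is exactly what Lemma \ref{basis}(a) is tailor-made for. If $A$ is any $j$-subset of $\{1,\ldots,n\}$ with $\rk(A)=k$, then $\cl(A)$ is a rank-$k$ flat, so $j \leq |\cl(A)| \leq |X_k| = b_{k+1}-1$. This forces $b_{k+1} > j$, and since the $b_\ell$ are strictly increasing with $b_i \leq j$, we must have $k \geq i$. Hence $\rk(A) \geq \rk(\{1,\ldots,j\})$.

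Part (a) then follows immediately: if $\underline{r} = \underline{r}(\pi)$ for some permutation $\pi$, then $r_1+\cdots+r_j = \rk(\{\pi(1),\ldots,\pi(j)\}) \geq \rk(\{1,\ldots,j\}) = s_1+\cdots+s_j$ for every $j$, which is precisely the definition of $\underline{r} \trianglerighteq \underline{s}$. For part (b), taking $\pi$ to be the identity permutation turns every one of these inequalities into an equality, so $\underline{r}(\mathrm{id}) = \underline{s}$; this single permutation already contributes $1$ to $g_{\underline{s}}(F(\underline{s}))$, and since all the coefficients $g_{\underline{r}}$ are nonnegative, the coefficient is strictly positive. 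The only substantive step is the minimum-rank observation, which is a direct application of Lemma \ref{basis}(a); I anticipate no further obstacle.
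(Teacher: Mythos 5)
Your proof is correct and rests on the same pivot as the paper's, namely Lemma \ref{basis}(a): for part (a) the paper applies that lemma to the flats $Y_i=\cl(\{\pi(1),\ldots,\pi(b_i)\})$ to bound the positions of the $1$'s in $\underline{r}$ against those in $\underline{s}$, while you package the very same bound as the statement that the initial segments $\{1,\ldots,j\}$ have minimum rank among all $j$-sets and then compare partial sums directly, which is a clean equivalent formulation. For part (b) you are a bit more economical: you exhibit the identity permutation as a single witness with rank sequence $\underline{s}$, whereas the paper writes $g_{\underline{s}}(F(\underline{s}))$ as an explicit product of factorials times a count of maximal flags; both arguments suffice.
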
  
\begin{proof}  
Let $\underline{r}$ be the rank sequence associated with the permutation $\pi$ and $b_1, b_2, \ldots, b_r$ be the elements of $\{j: r_j = 1\}$ arranged in increasing order.   
Then $\underline{r}$ defines a flag $Y_0 \subset Y_1 \subset \cdots \subset Y_r,$ where $Y_i = \cl(\{ \pi(1),\pi(2), \ldots, \pi(b_i) \}).$   By Lemma \ref{basis}(a),  $b_i \leq |Y_i| \leq |X_i|.$   This implies that $\underline{r} \trianglerighteq \underline{s}.$    

To prove part (b), note that  
\[
g_{\underline{s}} (F(\underline{s})) =  (b_1 - 1)! (b_2 - b_1)! (b_3 - b_2)!  \cdots (b_r - b_{r-1})! (n-b_r +1)! \Phi,
\] 
where $\Phi$ is the number of flags $Y_0 \subset Y_1 \subset \cdots \subset Y_r$ in $F(\underline{s})$ such that $|Y_i| = |X_i|$ for all $i.$
\end{proof}

It follows from Lemma \ref{terms} that the system of equations 
\[
\sum_{\underline{r}}  g_{\underline{r}}(F(\underline{s})) [\underline{r}] = \mathcal{G}(F(\underline{s}))  
\eqno(4.1)\]
is triangular with non-zero diagonal coefficients.  Thus, we can invert the system (with a triangular matrix) and write a symbol as a linear combination of $\mathcal{G}$-invariants of freedom matroids. 

For example, when $n=4$ and $r=2,$  the matrix $\big(g_{\underline{r}}(F(\underline{s}))\big)$ and its inverse are 
\[
\left(
\begin{array}{cccccc}
4   &   0   &   0   &   0  &   0   &   0 
\\
4   &   2   &   0   &   0  &   0   &   0
\\
4   &   4   &   6   &   0  &   0   &   0
\\
4   &   4   &   0   &   6  &   0   &   0  
\\
4   &   6   &   6   &   6  &   4   &   0
\\
4   &   8   &   12   &  12  &  20   &   24
\end{array}
\right), \, \left(
\begin{array}{cccccc}
\tfrac {1}{4}    &   0   &   0   &   0  &   0   &   0 
\\
-\tfrac {1}{2}    &   \tfrac {1}{2}    &   0   &   0  &   0   &   0
\\
\tfrac {1}{6}    &   -\tfrac {1}{3}    &   \tfrac {1}{6}    &   0  &   0   &   0
\\
\tfrac {1}{6}    &   -\tfrac {1}{3}    &   0   &   \tfrac {1}{6}   &   0   &   0  
\\
0                &   \tfrac {1}{4}    &   -\tfrac {1}{4}    &   -\tfrac {1}{4}   &   \tfrac {1}{4}    &   0
\\
-\tfrac {1}{24}    &  - \tfrac {1}{24}    &   \tfrac {1}{8}    &  \tfrac {1}{8}   &  -\tfrac {5}{24}    &   \tfrac {1}{24} \end{array}
\right),
\]
where the rows/columns are indexed by the symbols/freedom matroids with defining sequences  
$0011,0101,0110,1001,1010,1100.$

\begin{thm}\label{freedombasis}  
The $\mathcal{G}$-invariants $\mathcal{G}(F(\underline{r})),$  $\underline{r} \in \mathcal{S}(n,r),$ form a basis for 
the vector space $\mathcal{G}(n,r).$  The change-of-basis matrices between the symbol basis and the $\mathcal{G}$-invariant basis are triangular. 
In particular, the $\mathcal{G}$-invariants $\mathcal{G}(M),$ where $M$ is an $(n,r)$-matroid, span $\mathcal{G}(n,r)$ and $\mathcal{T}(n,r)$ equals the image of $\mathsf{Sp}.$   
\end{thm}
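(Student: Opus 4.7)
The plan is to read the entire theorem off the triangular structure supplied by Lemma~\ref{terms}, so the argument is essentially a routine linear-algebra tidy-up on top of that lemma.

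First I would fix a linear extension of the partial order $\trianglerighteq$ on $\mathcal{S}(n,r)$ and list the $\binom{n}{r}$ sequences in $\trianglerighteq$-increasing order. Using this common ordering for rows (indexed by symbols $[\underline{r}]$) and columns (indexed by freedom matroids $F(\underline{s})$), form the matrix $A$ with entries $A_{\underline{r},\underline{s}} = g_{\underline{r}}(F(\underline{s}))$, so that the $\underline{s}$-th column of $A$ records the expansion of $\mathcal{G}(F(\underline{s}))$ in the symbol basis given by equation (4.1).

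Next I would invoke Lemma~\ref{terms}(a) to conclude that $A_{\underline{r},\underline{s}} = 0$ whenever $\underline{r}\not\trianglerighteq\underline{s}$, which by the chosen ordering means $A$ is lower triangular; and Lemma~\ref{terms}(b) to conclude that every diagonal entry $A_{\underline{s},\underline{s}} = g_{\underline{s}}(F(\underline{s}))$ is nonzero. Hence $A$ is invertible, and its inverse $A^{-1}$ is again lower triangular. Since the symbols $[\underline{r}]$ are by construction a basis of $\mathcal{G}(n,r)$ with cardinality $\binom{n}{r}$, and the $\binom{n}{r}$ vectors $\mathcal{G}(F(\underline{s}))$ are related to them by the invertible matrix $A$, the family $\{\mathcal{G}(F(\underline{s})) : \underline{s} \in \mathcal{S}(n,r)\}$ is also a basis, and the change-of-basis matrices in both directions are triangular, establishing the first two sentences of the theorem.

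For the ``in particular'' clause, I would observe that each $\mathcal{G}(F(\underline{s}))$ is by definition the $\mathcal{G}$-invariant of the $(n,r)$-matroid $F(\underline{s})$, so the family of $\mathcal{G}$-invariants of $(n,r)$-matroids contains a spanning set of $\mathcal{G}(n,r)$ and hence itself spans. Applying the linear map $\mathsf{Sp}$ to a spanning set gives a spanning set of the image, and since $\mathsf{Sp}(\mathcal{G}(M)) = T(M)$ by Lemma~\ref{specialization}, the image of $\mathsf{Sp}$ is spanned by Tutte polynomials of $(n,r)$-matroids, which is precisely $\mathcal{T}(n,r)$ by definition. Conversely, every $T(M)$ lies in the image, so $\mathrm{image}(\mathsf{Sp}) = \mathcal{T}(n,r)$.

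No real obstacle is anticipated: the content of the theorem is absorbed into the two parts of Lemma~\ref{terms}, and the remainder is the standard observation that a triangular matrix with nonzero diagonal is invertible. The only point requiring minor care is choosing the row/column ordering consistently so that Lemma~\ref{terms}(a) genuinely yields a triangular matrix rather than merely a ``staircase'' pattern; any linear extension of $\trianglerighteq$ works.
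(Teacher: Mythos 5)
Your proposal is correct and follows essentially the same route as the paper: the paper likewise reads the theorem off the triangularity of the system $\sum_{\underline{r}} g_{\underline{r}}(F(\underline{s}))[\underline{r}] = \mathcal{G}(F(\underline{s}))$ (equation (4.1)), which is triangular with nonzero diagonal by Lemma~\ref{terms}, and then inverts it. The only addition you make is spelling out the choice of a linear extension of $\trianglerighteq$, which the paper leaves implicit.
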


%The next proposition  
% and says that the basis $\{T(F(\underline{m})), \underline{m}\in\mathsf{M}\}$ is a ``standard''  basis with respect to the $\trianglerighteq$-order. 
%follows from Corollary \ref{TPsubspace} and Proposition \ref{integral}.      

\begin{prop} \label{upper} 
If $\mathcal{G}(F(\underline{r}))$ occurs in the expansion of $\mathcal{G}(M)$ with non-zero coefficient, then $\underline{r} \in \mathrm{supp}(M).$   
\end{prop}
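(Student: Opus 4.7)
The plan is to combine the triangular change-of-basis between symbols and freedom-matroid $\mathcal{G}$-invariants coming from Lemma \ref{terms} (and reflected in Theorem \ref{freedombasis}) with the order-filter property of $\mathrm{supp}(M)$ from Lemma \ref{support}. First, I would write $\mathcal{G}(M) = \sum_{\underline{s}} c_{\underline{s}} \mathcal{G}(F(\underline{s}))$ in the freedom-matroid basis, expand each $\mathcal{G}(F(\underline{s}))$ in the symbol basis, and collect the coefficient of $[\underline{r}]$ on both sides.  This yields the triangular system
\[
g_{\underline{r}}(M) \;=\; \sum_{\underline{s}\,\trianglelefteq\,\underline{r}} g_{\underline{r}}(F(\underline{s}))\, c_{\underline{s}},
\]
the restriction to $\underline{s}\trianglelefteq\underline{r}$ being exactly the content of Lemma \ref{terms}(a).

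Since the diagonal entries $g_{\underline{s}}(F(\underline{s}))$ are nonzero by Lemma \ref{terms}(b), I would then solve this system by forward substitution along a linear extension of $\trianglerighteq$ starting at the minimum.  The result is an expression of the form
\[
c_{\underline{r}} \;=\; \sum_{\underline{t}\,\trianglelefteq\,\underline{r}} \beta_{\underline{r},\underline{t}}\, g_{\underline{t}}(M) \qquad (\beta_{\underline{r},\underline{t}} \in \mathbb{K}),
\]
so that $c_{\underline{r}}$ is a $\mathbb{K}$-linear combination of the coefficients $g_{\underline{t}}(M)$ with $\underline{t}\trianglelefteq\underline{r}$.

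To finish, I would observe that if $c_{\underline{r}} \neq 0$ then some $g_{\underline{t}}(M)$ appearing in this combination must be nonzero; non-negativity of the $g$-coefficients upgrades this to $g_{\underline{t}}(M) > 0$, i.e., $\underline{t} \in \mathrm{supp}(M)$ for some $\underline{t}\trianglelefteq\underline{r}$.  A final appeal to Lemma \ref{support}, which says $\mathrm{supp}(M)$ is an order filter in $(\mathcal{S}(n,r),\trianglerighteq)$, pushes $\underline{t}$ up to $\underline{r}$, giving $\underline{r}\in\mathrm{supp}(M)$.

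The main conceptual obstacle is just to notice that nonzero $c_{\underline{r}}$ does \emph{not} directly force $g_{\underline{r}}(M)$ itself to be positive: the triangular inversion only produces a nonzero $g_{\underline{t}}(M)$ for some $\underline{t}$ weakly below $\underline{r}$ in $\trianglerighteq$, and the order-filter property of $\mathrm{supp}(M)$ is precisely what is needed to transfer this non-vanishing back up to $\underline{r}$.  Once the direction of the triangular inversion is correctly identified, the non-negativity argument and the invocation of Lemma \ref{support} are routine.
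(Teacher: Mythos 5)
Your proof is correct and follows essentially the same route as the paper: triangularity of the change-of-basis matrix via Lemma \ref{terms}, triangularity of its inverse with respect to $\trianglerighteq$, and then Lemma \ref{support} to push a nonzero $g_{\underline{t}}(M)$ with $\underline{t}\trianglelefteq\underline{r}$ up to $\underline{r}$. The only cosmetic difference is that the paper packages the ``inverse of a triangular matrix is triangular'' step in the language of incidence algebras, whereas you obtain it by explicit forward substitution along a linear extension.
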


\begin{proof} 
We use the theory of incidence algebras on partially ordered sets (see \cite{Rota,EC1}).  Since $g_{\underline{r}}(F(\underline{s})) \neq 0$ only if $\underline{s} \trianglerighteq \underline{r},$  the entries of the matrix $\big(g_{\underline{r}}(F(\underline{s}))\big)$ form an incidence function on the partially ordered set $(\mathcal{S}(n,r),\trianglerighteq).$  Hence, by incidence-algebra theory, the entries of the inverse matrix, which is the change-of-basis matrix $\Delta$ from the symbol basis to the freedom-matroid basis, form an incidence function, that is, the $(\underline{s},\underline{r})$-entry of $\Delta$ is non-zero only if  $\underline{s} \trianglerighteq\underline{r}.$  

Let $\vec{M}$ be the vector $(g_{\underline{r}}(M))$ of coefficients of $\mathcal{G}(M)$ in the symbol basis,  Then $\Delta \vec{M}$ is the vector of coefficients of $\mathcal{G}(M)$  in the freedom-matroid basis.  As the entries of $\Delta$ form an incidence function, the coefficient of $\mathcal{G}(F(\underline{r}))$ is non-zero only if $\underline{r} \trianglerighteq \underline{s}$ for some sequence $\underline{s}$ in $\mathrm{supp}(M),$ and by Lemma \ref{support}, only if $\underline{r} \in \mathrm{supp}(M).$  
\end{proof}

% in $(\mathcal{S}(n,r),\trianglerighteq).$   
%It is not hard to see that $\mathrm{Upper}(M)$ is an order filter.  

%\begin{prop}\label{straightening} 
%Let $M$ be an $(n,r)$-matroid.   
%%$\mathrm{sym}(M)$ be the set of $(n,r)$-sequences $\underline{r}$ such that $g_{\underline{r}}(M) \neq 0,$ 
%Then the Tutte polynomial $T(M)$ is an integral  linear combination of Tutte polynomials 
%$T(F(\underline{m}))$ where $\underline{m} \in \mathsf{M} \cap \mathrm{Upper}(M).$  
%\end{prop} 

The linear map $\mathsf{Sp}$ sends $\mathcal{G}(M)$ to $T(M).$  Since $\mathcal{G}(M)$ can be written (uniquely) as a linear combination of $\mathcal{G}$-invariants of freedom matroids, we have the following corollary.  

\begin{cor}\label{TPsubspace} 
The Tutte polynomial of an $(n,r)$-matroid is a linear combination (not necessarily unique) 
of the Tutte polynomials 
$T(F(\underline{r})),$ $\underline{r} \in \mathcal{S}(n,r)$  of freedom matroids.     
\end{cor}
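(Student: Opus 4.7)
The plan is to derive this corollary directly from Theorem \ref{freedombasis} together with the linearity of $\mathsf{Sp}$; essentially the preceding paragraph of the excerpt already lays out the argument, and I would just formalize it. First, by Theorem \ref{freedombasis}, the family $\{\mathcal{G}(F(\underline{r})) : \underline{r} \in \mathcal{S}(n,r)\}$ is a basis of $\mathcal{G}(n,r)$, so for any $(n,r)$-matroid $M$ there exist unique scalars $c_{\underline{r}} \in \mathbb{K}$ with
$$
\mathcal{G}(M) \;=\; \sum_{\underline{r} \in \mathcal{S}(n,r)} c_{\underline{r}} \,\mathcal{G}(F(\underline{r})).
$$

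Next, I would apply the linear map $\mathsf{Sp}$ to both sides. By Lemma \ref{specialization}, $\mathsf{Sp}$ sends the $\mathcal{G}$-invariant of any matroid to its Tutte polynomial, so $\mathsf{Sp}(\mathcal{G}(M)) = T(M)$ and $\mathsf{Sp}(\mathcal{G}(F(\underline{r}))) = T(F(\underline{r}))$. Invoking linearity of $\mathsf{Sp}$ then yields
$$
T(M) \;=\; \sum_{\underline{r} \in \mathcal{S}(n,r)} c_{\underline{r}} \,T(F(\underline{r})),
$$
which is precisely the desired expression.

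Finally, I would note why the parenthetical ``not necessarily unique'' is unavoidable: although the coefficients $c_{\underline{r}}$ in the $\mathcal{G}$-invariant expansion are unique, the map $\mathsf{Sp}$ has a non-trivial kernel (it contains all the syzygies $\mathsf{sz}(I)$ of Lemma \ref{kersp}), so distinct elements of $\mathcal{G}(n,r)$ may have the same image in $\mathbb{K}[x,y]$. Consequently, a given Tutte polynomial admits many representations in terms of $T(F(\underline{r}))$'s. There is no substantial obstacle in this argument; all the real work is packaged into Theorem \ref{freedombasis}, and the corollary is merely an application of linearity.
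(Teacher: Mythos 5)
Your proposal is correct and is exactly the argument the paper gives: expand $\mathcal{G}(M)$ uniquely in the basis $\{\mathcal{G}(F(\underline{r}))\}$ from Theorem \ref{freedombasis} and apply the linear map $\mathsf{Sp}$, which sends each $\mathcal{G}$-invariant to the corresponding Tutte polynomial. Your added remark on why uniqueness fails (the nontrivial kernel of $\mathsf{Sp}$) is accurate and consistent with the paper's later results.
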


We end with a deletion-contraction lemma.  

\begin{lemma} \label{cdlemma}
Let $\underline{r}_1 1\check{0} \underline{r}_2$ be an $(n,r)$-sequence with a descent (indicated by a $\,\check{\,}$) at position $i.$    Then 
\[
T(F(\underline{r}_1 10 \underline{r}_2)) = T(F(\underline{r}_1 1 \underline{r}_2)) +  
T(F(\underline{r}_1 0 \underline{r}_2)). 
\]
\end{lemma}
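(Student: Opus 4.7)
The plan is to apply the standard Tutte-polynomial deletion-contraction identity $T(M) = T(M \setminus e) + T(M / e)$, which holds whenever the element $e$ is neither a loop nor an isthmus, to $M = F(\underline{r}_1 1 0 \underline{r}_2)$ with $e$ the element at position $i$ (the ``$0$'' of the descent).

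First, I would verify that $e$ is neither a loop nor an isthmus. Let $j$ be the number of $1$'s in $\underline{r}_1 1$, so that in the notation of the freedom-matroid definition $b_j = i - 1$. The element $e$ lies in the rank-$j$ flat $X_j$ but not in $X_{j-1}$; since $j \ge 1$, $e$ is not a loop. Since $b_j$ also lies in $X_j \setminus X_{j-1}$, the set $\{b_1, \ldots, b_r\}$ is a basis avoiding $e$, so $e$ is not an isthmus.

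Next, I would identify the two matroids $F(\underline{s}) \setminus e$ and $F(\underline{s}) / e$, where $\underline{s} = \underline{r}_1 1 0 \underline{r}_2$, by comparing bases via Lemma \ref{basis}(b). After relabeling the ground set $\{1, \ldots, n\} \setminus \{i\}$ as $\{1, \ldots, n-1\}$ by shifting indices larger than $i$ down by $1$, the flag sizes of $F(\underline{s}) \setminus e$ are precisely those read from the defining sequence $\underline{r}_1 1 \underline{r}_2$, giving $F(\underline{s}) \setminus e = F(\underline{r}_1 1 \underline{r}_2)$. For the contraction, a subset $B' \subseteq \{1,\ldots,n\}\setminus\{i\}$ is a basis of $F(\underline{s})/e$ iff $B' \cup \{e\}$ is a basis of $F(\underline{s})$, equivalently $|B' \cap X_k| \le k$ for $k < j$ and $|B' \cap X_k| \le k - 1$ for $k \ge j$. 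Under the same relabeling, these conditions match the basis constraints of $F(\underline{r}_1 0 \underline{r}_2)$, because its rank-$(j-1)$ flat is the image of $X_j \setminus \{e\}$, and for $k \ge j$ its rank-$k$ flat is the image of $X_{k+1} \setminus \{e\}$.

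Combining these three steps with the deletion-contraction identity yields the lemma. The delicate step is the contraction: one must observe that in the contracted matroid the pair $X_{j-1} \subset X_j$ collapses to a single rank-$(j-1)$ flat $X_j \setminus \{e\}$, matching the ``new'' rank-$(j-1)$ flat introduced by the $0$ at position $i-1$ of $\underline{r}_1 0 \underline{r}_2$. Once this alignment is observed, the remaining bookkeeping is routine.
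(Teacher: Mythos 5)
Your proposal is correct and follows the same route as the paper: apply deletion--contraction at the descent element, which is neither a loop nor an isthmus, and identify the deletion and contraction as the freedom matroids $F(\underline{r}_1 1 \underline{r}_2)$ and $F(\underline{r}_1 0 \underline{r}_2)$. The paper states these identifications without proof, whereas you verify them via the basis characterization of Lemma \ref{basis}(b); that verification is sound.
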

\begin{proof}
Since a descent occurs at $i,$ the element $i$ is neither a loop nor an isthmus.  Hence, by the deletion-contraction recursion for Tutte polynomials,
\[
T(F(\underline{r}_1 10 \underline{r}_2)) = 
T(F(\underline{r})) \backslash i + T(F(\underline{r})) / i = 
T(F(\underline{r}_1 1 \underline{r}_2)) +  
T(F(\underline{r}_1 0 \underline{r}_2)). 
\qedhere  \]  
\end{proof}

\section{Two bases for $\mathcal{T}(n,r)$}  \label{Twobases}

In this section, we find two bases for $\mathcal{T}(n,r),$ one coming from the join-irreducibles, the other from the meet-irreducibles of $(\mathcal{S}(n,r),\trianglerighteq).$
We begin with the basis built from the join-irreducibles.  

\begin{thm}  
The Tutte polynomials $T(F(0^a1^b0^c1^d)),$ where $a+b+c+d = n$ and $b+d=r,$  are linearly independent in $\mathcal{T}(n,r).$   
\end{thm}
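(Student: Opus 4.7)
The plan is to exploit the product structure of the Tutte polynomials at stake and then reduce $(n, r)$ to $(n-2, r-1)$ by induction.  A direct inspection of the construction in Section \ref{freedom} shows that $F(0^a 1^b 0^c 1^d)$ decomposes as the direct sum of $a$ loops, the uniform matroid $U_{b,b+c}$, and $d$ isthmuses, so
\[ P_{a,d} := T\bigl(F(0^a 1^b 0^c 1^d)\bigr) = y^a x^d\, T(U_{b,b+c}), \]
with $b = r-d$ and $c = n-r-a$.  The degenerations $b=0$ and $c=0$ both collapse this polynomial to $P^{\min} := x^r y^{n-r}$, so the $r(n-r)+1$ distinct polynomials are $P^{\min}$ together with the ``generic'' family $\{P_{a,d}:(a,d)\in\{0,\ldots,n-r-1\}\times\{0,\ldots,r-1\}\}$.

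Because $U_{b,b+c}$ is loopless and coloopless when $b,c\geq 1$, its Tutte polynomial splits into purely $x$- and purely $y$-monomials: $T(U_{b,b+c};x,y)=P(x)+Q(y)$, and $P(0)+Q(0)=T(U_{b,b+c})(0,0)=0$ either by Proposition \ref{xy} or directly from $(1-1)^{b+c}=0$.  Writing $\hat P(x)=P(x)-P(0)$ and $\hat Q(y)=Q(y)-Q(0)$, we obtain $T(U_{b,b+c})=\hat P(x)+\hat Q(y)$ with $\hat P(0)=\hat Q(0)=0$, so each generic $P_{a,d}$ is supported on the ``L-shape'' $\{(i,a):d\leq i\leq r\}\cup\{(d,j):a\leq j\leq n-r\}$, which avoids the corner $(r,n-r)$ whenever $a\leq n-r-1$ and $d\leq r-1$.

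I would proceed by induction on $n+r$, the cases $r\in\{0,n\}$ being trivial (single polynomial) and $r=1$ or $n-r=1$ handled without further recursion.  Given a vanishing combination $\sum_{(a,d)} c_{a,d}\,P_{a,d}+c^{\min}\,P^{\min}=0$, the coefficient of $x^r y^{n-r}$ immediately forces $c^{\min}=0$.  Specializing at $y=0$ kills every $P_{a,d}$ with $a\geq 1$ and reduces the relation to
\[ \sum_{d=0}^{r-1} c_{0,d}\, x^d \hat P_{r-d,n-d}(x)=0. \]
Each summand has lowest nonzero term $\binom{n-d-2}{r-d-1}\,x^{d+1}$, with the binomial strictly positive whenever $r<n$.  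The distinct lowest $x$-degrees make these summands linearly independent in $\mathbb{K}[x]$, giving $c_{0,d}=0$ for all $d$; the symmetric specialization at $x=0$ gives $c_{a,0}=0$ for all $a$.

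The residual relation $\sum_{a,d\geq 1} c_{a,d}\,y^a x^d\,T(U_{r-d,n-a-d})=0$ has the common factor $xy$; cancelling it and reindexing $a=a'+1$, $d=d'+1$ yields exactly a vanishing linear combination of the generic polynomials of the $(n-2,r-1)$-family (with zero coefficient on the corresponding $P^{\min}$), which forces the remaining $c_{a,d}$ to vanish by the induction hypothesis.  The step I expect to require the most care is the initial boundary book-keeping that identifies all degenerate parameterizations (those with $b=0$ or $c=0$) with the single polynomial $P^{\min}$; once this counting is in place, the evaluate-and-reduce argument is essentially mechanical.
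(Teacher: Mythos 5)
Your argument is correct, and it reaches the conclusion by a genuinely different route from the paper. Both proofs rest on the same two facts — formula (5.1) for $T(U_{b,b+c})$ and the product formula $T(F(0^a1^b0^c1^d))=y^aT(U_{b,b+c})x^d$, hence the ``L-shaped'' support of each polynomial anchored at $(d,a)$ — but the paper converts this into an explicit coefficient matrix $\Gamma$, orders the monomials and the defining sequences into blocks, and verifies that $\Gamma$ is block lower triangular with invertible (upper triangular) diagonal blocks, so it has full column rank $r(n-r)+1$. You instead isolate $P^{\min}=x^ry^{n-r}$ by the corner coefficient, peel off the blocks $a=0$ and $d=0$ by the specializations $y=0$ and $x=0$ (using the strictly increasing lowest degrees $d+1$, resp.\ $a+1$, with positive leading binomials), and then divide the residual relation by $xy$ to land exactly on the generic $(n-2,r-1)$-family, closing the induction; I checked the reindexing $b'=r-d$, $c'=n-r-a$ and it is consistent, and the base cases $r\le 1$ or $n-r\le 1$ indeed terminate without recursion. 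Your identification of the degenerate parameters ($b=0$ or $c=0$) with the single polynomial $x^ry^{n-r}$ matches the paper's implicit indexing by join-irreducible sequences. What the paper's version buys is the explicit matrix $\Gamma$ and its block structure (Lemma \ref{blockmatrices}), which makes the rank statement and the subsequent dimension count immediate; what your version buys is a shorter, bookkeeping-free argument that avoids choosing a monomial order, at the cost of an induction on $(n,r)$. Either way the theorem and the consequence $\dim\mathcal{T}(n,r)\ge r(n-r)+1$ follow.
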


We begin the proof with two formulas.  The freedom matroid $F(1^r 0^{n-r})$ is the uniform matroid $U_{r,n}.$  
Writing $r=b$ and $n-r=c,$ its Tutte polynomial is given by 
\[
T(U_{b,b+c}) = \sum_{j=0}^{b-1} \binom {c-1+j}{j}x^{b-j}  + \sum_{k=0}^{c-1} \binom {b-1+k}{k} y^{c-k}.     
\eqno(5.1)\]
For example, 
\[
T(U_{5,9})= x^5 + 4 x^4 + 10 x^3 + 20 x^2 +35x + 35 y + 15 y^2 + 5 y^3 + y^4.  
\]
The freedom matroids $F(0^a1^b0^c1^d)$ are direct sums of the uniform matroid $U_{b,b+c}$ with $a$ loops and $d$ isthmuses and hence, 
\[
T(F(0^a1^b0^c1^d)) = y^a T(U_{b,b+c})x^d.   
\eqno(5.2)\] 
For example, 
\[
T(F(0^2 1^3 0^2  1^3)) = y^2x^6 + 2y^2x^5 + 3y^2x^4 + 3y^3x^3 + y^4 x^3.     
\]

Using these formulas, we write down the matrix $\Gamma$ of coefficients of Tutte polynomials $T(F(0^a1^b0^c1^d)).$ The columns of $\Gamma$ are indexed by freedom matroids with defining sequences  
\begin{eqnarray*}
&&
1^r0^{n-r}, 1^{r-1}0^{n-r}1, \ldots, 1^2 0^{n-r}1^{n-r-2}, 10^{n-r}1^{n-r-1}, \,\, 
\\
&&
01^r0^{n-r-1}, 01^{r-1}0^{n-r-1}1, \ldots, 01^2 0^{n-r-1}1^{r-2}, 010^{n-r-1}1^{r-1}, \,\,
\\
&& \qquad \vdots 
\\
&&
0^{n-r-1} 1^r0, 0^{n-r-1} 1^{r-1}01, \ldots, 0^{n-r-1} 1^2 01^{n-r-2}, 0^{n-r-1} 101^{r-1},\,\,
\\
&& 0^{n-r}1^r 
\end{eqnarray*} 
in the given order and the rows are indexed by monomials  
\begin{eqnarray*}
&&  
x^r,x^{r-1},\ldots,x^2,x, \,\,   
\\
&&
yx^r, yx^{r-1},\ldots, yx^2, yx,   \,\, 
\\
&&
y^2x^r,y^2 x^{r-1},\ldots,y^2x^2, y^2x, \,\,
\\
&& \qquad \vdots 
\\
&& 
y^{n-r-1}x^r,  y^{n-r-1}x^{r-1},\ldots,y^{n-r-1}x^2,y^{n-r-1}x, \,\,
\\
&& y^{n-r}x^r,    
\end{eqnarray*} 
in the given order, 
with the remaining monomials following in any order.   
Note that the columns and the first $r(n-r)+1$ rows are divided into $n-r$ {\em blocks}, each with $r$ indices, and one additional index.   We label the blocks on the rows by the number of $0$'s at the beginning of the defining sequence and the blocks on the columns by the exponent of the variable $y;$ in both cases, the label ranges from $0$ to $n-r-1.$  
For example, when $r=3$ and $n=5,$ there are two blocks, each of size $3,$ and $\Gamma$ is the $11 \times 7$ matrix  
\[
\begin{array}{cccccccc}
\,    &11100  &  11001  &  10011  &  01110  & 01101  & 01011  & 00011
\\
x^3 &  1  &   1      &   1    &     0    &        0          &        0     &     0        
\\
x^2 &  2  &    2     &    0   &      0   &        0          &          0   &       0      
\\
x &    3  &     0      &     0   &    0      &         0         &         0    &            0 
\\
yx^3 &  0   &   0    &    0    &    1   &           1        &       1      &        0         
\\
yx^2 &  0   &     0  &     1  &     1   &           1       &        0     &       0         
\\
yx &   0  &   2        &     0   &      1    &         0        &        0     &         0       
\\
y^2x^3  & 0   & 0  &    0    &     0   &           0        &       0      &        1       
\\
y  & 3   &      0       &     0    &       0  &            0       &       0      &          0      
\\
y^2  &  1   &    0    &    0   &     1   &           0        &        0     &      0         
\\
y^2x^2  &   0  &  0  &   1    &    0   &          0         &       1     &  0             
\\
y^2x  &  0 &   1      &    0   &   0     &           1         &        0     &           0     
\end{array}
\]

It follows from formulas (5.1) and (5.2) that in the Tutte polynomial $T(F(0^a 1^b 0^c 1^d)),$ all the monomial in blocks $0$ to $a-1$ have zero coefficient, and in block $a,$ only the monomials $y^a x^r, y^ax^{r-1}, \ldots, y^a x^{r-b+1}$ have  non-zero coefficients.   In addition, the last Tutte polynomial, $T(F(0^{n-r}1^r)),$ equals $y^{n-r}x^r$ and 
the monomial $y^{n-r}x^r$ does not occur (with non-zero coefficient) in any other 
Tutte polynomials $T(F(0^a 1^b 0^c 1^d)).$  

\begin{lemma}\label{blockmatrices}
The block diagonal submatrix $U_i,$   with rows and columns indexed by the $i$th block,  
are upper triangular matrices with non-zero diagonal entries.  The upper block submatrices, with rows indexed by the $i$th block and  columns indexed by the $j$th block, with $i < j,$   
are zero matrices.  
The only non-zero entry in the $(r(n-r)+1)$st row and $(r(n-r)+1)$st column is the diagonal entry.  
\end{lemma}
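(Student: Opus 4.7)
The strategy is to derive all three assertions from a single explicit coefficient expansion of $T(F(0^a 1^b 0^c 1^d))$. Combining formulas (5.1) and (5.2) and writing $c=n-r-a$, $d=r-b$, we have
\[
T(F(0^a 1^b 0^c 1^d)) \;=\; \sum_{j=0}^{b-1} \binom{c-1+j}{j}\, y^{a}\, x^{r-j} \;+\; \sum_{k=0}^{c-1} \binom{b-1+k}{k}\, y^{a+c-k}\, x^{d}.
\]
The decisive observation is that monomials in the first sum all carry $y$-exponent exactly $a$, while those in the second sum carry $y$-exponents in $\{a+1,\ldots,a+c\}=\{a+1,\ldots,n-r\}$. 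This clean separation by $y$-exponent drives the structural claims.

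The zero-blocks claim is immediate: for a column in block $a$ and a row in block $i<a$, the relevant monomial $y^{i}x^{s}$ has $y$-exponent $i$, which is out of range for both sums, so the entry vanishes. For the block-diagonal piece $U_a$ the row constraint $i=a$ selects only the first sum, making the entry at row $y^{a}x^{s}$ and column $(b,d)=(b,r-b)$ equal to $\binom{c-1+(r-s)}{r-s}$ when $s\ge r-b+1$ and $0$ otherwise. The non-zero entries of $U_a$ therefore fill the triangular region $\{(s,b):s+b\ge r+1\}$, and the boundary line $s+b=r+1$ carries the entries $\binom{c+b-2}{b-1}$, which are strictly positive because $c=n-r-a\ge 1$ throughout the $n-r$ regular blocks. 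This realises the claimed triangular shape with non-vanishing diagonal (after the standard reversal of row or column order within each block).

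Finally, the special defining sequence $0^{n-r}1^{r}$ has $b=r$, $c=d=0$, so formula (5.2) collapses to $T(F(0^{n-r}1^{r}))=y^{n-r}x^{r}$; hence the $(r(n-r)+1)$st column has only its diagonal entry nonzero. Conversely, the monomial $y^{n-r}x^{r}$ can appear in $T(F(0^a 1^b 0^c 1^d))$ only via the first sum with $j=0$ and $a=n-r$, or via the second sum with $d=r$, i.e.\ $b=0$; neither holds for any regular column ($b\ge 1$, $a\le n-r-1$), so the $(r(n-r)+1)$st row is also zero off the diagonal. The whole argument is routine binomial bookkeeping once the coefficient expansion above is in hand; the only item needing a moment's thought is the strict positivity of the extremal binomials $\binom{c+b-2}{b-1}$, which is immediate from $c\ge 1$.
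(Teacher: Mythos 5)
Your proof is correct and follows essentially the same route as the paper, which derives the lemma directly from formulas (5.1) and (5.2) by observing that the first sum contributes only monomials with $y$-exponent exactly $a$ while the second contributes only $y$-exponents in $\{a+1,\dots,n-r\}$. Your parenthetical about reversing an index order within each block is a worthwhile clarification: under the paper's stated orderings each $U_i$ is anti-triangular (nonzero exactly where $s+b\ge r+1$, as the displayed $U_0$ in the $r=3$, $n=5$ example shows) rather than literally upper triangular, but this does not affect the invertibility of the diagonal blocks, which is all the rank argument requires.
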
  

Lemma \ref{blockmatrices} implies that the matrix $\Gamma$ restricted to the first $r(n-r)+1$ rows has the form 
\[
\left(\begin{array}{cccccc}
U_0    &    0   &   0   &  \ldots  &  0  &  0
\\
*        & U_1  &   0   &             &  0  &  0
\\
*        &   *    &   U_2 &           &  0  &  0 
\\
          &&&                     \ddots  &&   \vdots
\\
*       &*      &*        &             & U_{n-r-1}  &  0  
\\
0       &   0     & 0         &  \ldots             &   0                 &   1
\end{array}\right),
\]
where the block diagonal submatrices $U_i$ are $r \times r$ upper triangular matrices and the asterisks $*$ are $r \times r$ matrices.
From this, it is evident that the matrix $\Gamma$ has rank $r(n-r)+1.$  
We conclude that 
\[
\dim \mathcal{T}(n,r) \geq r(n-r)+1.  
\]   
Combining this with Corollary \ref{spankersp} and the first homomorphism theorem for vector spaces, we obtain 
\[
r(n-r)+1 \geq \dim \mathcal{G}(n,r)/\mathcal{K} \geq \dim \mathcal{T}(n,r) \geq r(n-r)+1, 
\]
and hence $\mathcal{K} = \ker \mathsf{Sp}$ and 
\[
 \mathcal{G}(n,r)/\mathcal{K} \cong \mathcal{T}(n,r).
\] 
The following results are immediate consequences.    

\begin{thm} \label{main1}
The linear combinations $\mathsf{sz}(I),$ where $I$ ranges over all height-$2$ intervals of $(\mathcal{S}(n,r),\trianglerighteq),$ span $\ker \mathsf{Sp}.$ 
The symbols $[\underline{j}], \, \underline{j} \in \mathsf{J},$ form a basis for the quotient $\mathcal{G}(n,r)/\ker \mathsf{Sp}$ and the symbols $[\underline{m}], \, \underline{m} \in \mathsf{M},$ form a basis for $\mathcal{G}(n,r)/\ker \mathsf{Sp}.$    
\end{thm}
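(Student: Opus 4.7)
The plan is to observe that Theorem \ref{main1} is essentially a bookkeeping consequence of the dimension chain already assembled in the paragraph preceding it. The forward containment $\mathcal{K} \subseteq \ker \mathsf{Sp}$ is exactly Lemma \ref{kersp}, so the identity on $\mathcal{G}(n,r)$ descends to a surjection $\mathcal{G}(n,r)/\mathcal{K} \twoheadrightarrow \mathcal{G}(n,r)/\ker \mathsf{Sp}$. By the first isomorphism theorem combined with Theorem \ref{freedombasis}, the target is isomorphic to $\mathrm{im}\,\mathsf{Sp} = \mathcal{T}(n,r)$.

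First I would chain the four bounds that are already in the text,
\[
r(n-r)+1 \;\geq\; \dim \mathcal{G}(n,r)/\mathcal{K} \;\geq\; \dim \mathcal{G}(n,r)/\ker \mathsf{Sp} \;=\; \dim \mathcal{T}(n,r) \;\geq\; r(n-r)+1,
\]
where the outer upper bound comes from Lemma \ref{spankersp} and the outer lower bound is the rank computation for the matrix $\Gamma$ obtained via Lemma \ref{blockmatrices}. All inequalities must therefore collapse to equalities. The equality of quotient dimensions, together with $\mathcal{K} \subseteq \ker \mathsf{Sp}$, forces $\mathcal{K} = \ker \mathsf{Sp}$, which is the first assertion of the theorem.

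For the two basis claims, Lemma \ref{spankersp} states that the symbols $[\underline{j}]$ for $\underline{j} \in \mathsf{J}$ (respectively $[\underline{m}]$ for $\underline{m} \in \mathsf{M}$) span the quotient $\mathcal{G}(n,r)/\mathcal{K} = \mathcal{G}(n,r)/\ker \mathsf{Sp}$. Since $|\mathsf{J}| = |\mathsf{M}| = r(n-r)+1$ and this quotient has dimension exactly $r(n-r)+1$, each spanning set has the right cardinality to be linearly independent, and so is a basis.

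The main obstacle here is not in Theorem \ref{main1} itself — it has effectively been discharged by the preceding pieces. The substantive work lies in three earlier ingredients: Lemma \ref{kersp} producing the height-$2$ syzygies, Lemma \ref{spankersp} using those syzygies to reduce modulo $\mathcal{K}$ to the (meet- or join-) irreducible symbols (an upper bound of $r(n-r)+1$ on the quotient dimension), and the explicit block-triangular analysis of $\Gamma$ supplying the matching lower bound on $\dim\mathcal{T}(n,r)$. Once those three are in place, Theorem \ref{main1} is just a pigeonhole-style collation of matching upper and lower dimension bounds.
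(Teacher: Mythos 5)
Your proposal is correct and follows essentially the same route as the paper: the author likewise chains $r(n-r)+1 \geq \dim \mathcal{G}(n,r)/\mathcal{K} \geq \dim \mathcal{T}(n,r) \geq r(n-r)+1$ (using Lemma \ref{spankersp}, the containment $\mathcal{K} \subseteq \ker\mathsf{Sp}$ from Lemma \ref{kersp} together with the first homomorphism theorem, and the rank computation for $\Gamma$), forces equality throughout, and deduces the basis statements from the cardinality count $|\mathsf{J}|=|\mathsf{M}|=r(n-r)+1$. No gaps.
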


\begin{thm}\label{mainjoin}
The Tutte polynomials $T(F(\underline{j})), \, \underline{j} \in \mathsf{J},$ form a basis for $\mathcal{T}(n,r).$  In particular, 
\[
\dim \mathcal{T}(n,r) =r(n-r)+1. 
\]  
\end{thm}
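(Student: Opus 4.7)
The plan is to combine the linear independence established in the previous (unnumbered) theorem with the upper bound from Lemma \ref{spankersp} and the inclusion $\mathcal{K} \subseteq \ker\mathsf{Sp}$ from Lemma \ref{kersp}, producing a squeeze of dimensions.

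First I would recall that the join-irreducibles $\underline{j} \in \mathsf{J}$ of $(\mathcal{S}(n,r),\trianglerighteq)$ are exactly the $(n,r)$-sequences of the form $0^a 1^b 0^c 1^d$ with $a+b+c+d=n$ and $b+d=r$, and that $|\mathsf{J}| = r(n-r)+1$ by the counting argument given in Section \ref{Younglattice}. Thus the previous theorem (linear independence of $T(F(0^a1^b0^c1^d))$), whose matrix-based proof via $\Gamma$ occupies the bulk of this section, is exactly the assertion that the family $\{T(F(\underline{j}))\}_{\underline{j} \in \mathsf{J}}$ is linearly independent in $\mathcal{T}(n,r)$. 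This immediately gives the lower bound
\[
\dim \mathcal{T}(n,r) \,\geq\, r(n-r)+1.
\]

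Next I would obtain the matching upper bound. Lemma \ref{kersp} shows $\mathcal{K} \subseteq \ker\mathsf{Sp}$, so $\mathsf{Sp}$ factors through the quotient $\mathcal{G}(n,r)/\mathcal{K}$. By Theorem \ref{freedombasis} the image of $\mathsf{Sp}$ is exactly $\mathcal{T}(n,r)$, and therefore
\[
\dim \mathcal{T}(n,r) \,\leq\, \dim \mathcal{G}(n,r)/\mathcal{K} \,\leq\, r(n-r)+1,
\]
where the last inequality is Lemma \ref{spankersp}. Chaining these inequalities forces equality throughout, so $\dim \mathcal{T}(n,r) = r(n-r)+1$, the join-irreducible Tutte polynomials are a basis of $\mathcal{T}(n,r)$, and as a byproduct $\mathcal{K} = \ker\mathsf{Sp}$ and $\mathcal{G}(n,r)/\ker\mathsf{Sp} \cong \mathcal{T}(n,r)$ (which is Theorem \ref{main1}).

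No step here presents a genuine obstacle: the conceptual work has already been absorbed into the block-triangular analysis of $\Gamma$ (linear independence) and into the straightening argument via height-$2$ intervals (upper bound). The only thing to verify in writing up the proof is the identification of $\mathsf{J}$ with the sequences $0^a 1^b 0^c 1^d$, which is exactly what was observed in Section \ref{Younglattice}.
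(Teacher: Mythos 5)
Your proposal is correct and follows essentially the same route as the paper: the lower bound $\dim\mathcal{T}(n,r)\geq r(n-r)+1$ comes from the block-triangular analysis of the coefficient matrix $\Gamma$ (linear independence of the $T(F(0^a1^b0^c1^d))$), and the upper bound comes from chaining $\dim\mathcal{T}(n,r)\leq\dim\mathcal{G}(n,r)/\mathcal{K}\leq r(n-r)+1$ via Lemma \ref{kersp}, Theorem \ref{freedombasis}, and Lemma \ref{spankersp}. The squeeze and its byproducts ($\mathcal{K}=\ker\mathsf{Sp}$, Theorem \ref{main1}) are exactly as in the paper.
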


\noindent 
We call $\{T(F(\underline{j})): \underline{j} \in \mathsf{J} \}$ the {\sl  join-irreducible basis} of $\mathcal{T}(n,r).$

We will now consider the basis built from the meet-irreducibles, that is, $(n,r)$-sequences of the form $1^a0^b1^c0^d.$   When $c =1,$ $F(1^{r-1}0^b10^{n-r-b})$ is the paving matroid on $\{1,2,\ldots,n\}$ with one non-trivial copoint $\{1,2,\ldots, r-1+b\}.$    
%Freedom matroids defined by meet-irreducible $(n,r)$-sequences are generalizations of paving matroids with exactly one %non-trivial copoint.   
Freedom matroids defined by meet-irreducibles can be characterized by their cyclic flats. Recall that a set is {\sl cyclic} if it is a union of circuits.  

\begin{lemma}   Let $n > r.$ 
An $(n,r)$-matroid $M$ is isomorphic to a freedom matroid $F(\underline{s}),$ where $\underline{s}$ is a meet-irreducible, if and only if $M$ contains exactly one cyclic flat, or exactly two cyclic flats, one of which is the entire set $\{1,2,\ldots,n\}.$    
\end{lemma}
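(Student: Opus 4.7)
The plan is to use the fact that freedom matroids are nested matroids --- their cyclic flats form a chain --- and that a nested matroid is determined up to isomorphism by its chain of cyclic flats together with their ranks. Meet-irreducible sequences come in five shapes: $1^r 0^{n-r}$, $0^{n-r} 1^r$, $1^a 0^b 1^c$, $0^b 1^c 0^d$, and $1^a 0^b 1^c 0^d$ (with the exponents in each shape all positive), and each specifies a concrete freedom matroid whose cyclic flats I will read off from the construction.

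For the forward direction I enumerate the non-empty cyclic flats of $F(\underline{s})$. Among the distinguished flats $X_0 \subset X_1 \subset \cdots \subset X_r$, the only candidates are: the set of leading loops $X_0$, present iff $a = 0$; the ``thickened'' intermediate flat $X_a = \{1, \dots, a+b\}$, on which the restriction is $U_{a,a+b}$ and hence cyclic precisely when both $a \geq 1$ and $b \geq 1$; and the full set $E = X_r$, cyclic iff $M$ has no coloops, which happens iff $d \geq 1$ or $c = 0$. A short case check on the five shapes gives either one cyclic flat or two cyclic flats with $E$ among them, as the lemma claims.

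For the reverse direction the pivotal step is a uniform characterization: if the only cyclic flat of $M$ is $E$, then $M \cong U_{r,n}$. I plan to prove this by showing every proper flat $F$ of $M$ is independent --- its cyclic part $C_F$ (the elements of $F$ lying in some circuit of $M|F$) is a flat of $M$ because $\cl_M$ of any subset of $F$ lies in $F$, and is cyclic by construction, so would be a cyclic flat of $M$ strictly inside $E$, forcing $C_F = \emptyset$ --- and then every $r$-subset of $E$ must be a basis. From here I case-split: (i) $\{E\}$ gives $M \cong U_{r,n} = F(1^r 0^{n-r})$; (ii) $\{Z\}$ with $Z \subsetneq E$ makes the coloops of $M$ equal $E \setminus Z$ (the complement of the union of all cyclic flats), and after the stability step below $M|Z$ has $Z$ as its sole cyclic flat, so $M \cong U_{k, |Z|} \oplus (\text{coloops})$, a two- or three-block meet-irreducible; (iii) $\{Z, E\}$ with loops forces $Z$ to be the loop set and reduces to case (i) on the non-loop part, giving $F(0^{|Z|} 1^r 0^{n-r-|Z|})$; (iv) $\{Z, E\}$ with no loops produces uniform $M|Z$ and $M/Z$, and the chain $Z \subsetneq E$ with $k = \rk(Z)$ identifies $M$ as $F(1^k 0^{|Z|-k} 1^{r-k} 0^{n-r-|Z|+k})$.

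The main obstacle will be the stability step: showing that if $Z$ is a cyclic flat of $M$, then any cyclic flat of $M|Z$ is a cyclic flat of $M$ (and dually for $M/Z$). My plan is to fix $C \subsetneq Z$ cyclic in $M|Z$, set $C' := \cl_M(C)$, and observe that $C'$ is cyclic in $M$: for any $e \in C' \setminus C$, a basis $B$ of $M|C$ makes $B \cup \{e\}$ a circuit through $e$. Hence $C'$ must be one of the (one or two) cyclic flats of $M$; the constraints $C' \cap Z = C$ and $\rk(C') = \rk(C) \leq \rk(Z) < r$ pin down $C' = Z$, so $C = Z$, contradicting $C \subsetneq Z$. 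The contraction analog follows from the standard bijection between cyclic flats of $M/Z$ and cyclic flats of $M$ strictly containing $Z$.
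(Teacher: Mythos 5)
Your proposal is correct in substance and, on the forward direction, follows the same route as the paper: a case analysis on the shape $1^a0^b1^c0^d$ of a meet-irreducible sequence, reading off the (nonempty) cyclic flats as the loop set or the thickened flat $\{1,\ldots,a+b\}$ together with, when there are no coloops, the ground set. Where you genuinely add value is the reverse direction, which the paper dispatches with ``this argument can be reversed.'' Your two workhorses there are sound: (1) the cyclic part of any flat $F$ (the union of the circuits inside $F$) is itself a cyclic flat, so a matroid whose only nonempty cyclic flat is the ground set has every proper flat independent and is therefore uniform; (2) since a cyclic flat $Z$ is in particular a flat, the cyclic flats of $M|Z$ are exactly the cyclic flats of $M$ contained in $Z$ (your detour through $C'=\cl_M(C)$ is correct but unnecessary, as $\cl_M(C)\subseteq\cl_M(Z)=Z$ already forces $C'=C$), and dually for $M/Z$ via the standard correspondence. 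These reduce every case to uniform restrictions/contractions and coloops/loops.

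The one step you should tighten is the final identification in case (iv). Knowing that $M|Z\cong U_{k,|Z|}$ and $M/Z\cong U_{r-k,n-|Z|}$ does \emph{not} by itself determine $M$ up to isomorphism (e.g.\ elements outside a line $Z$ could be pairwise parallel without changing either minor); what saves you is the extra hypothesis that $Z$ and $E$ are the \emph{only} nonempty cyclic flats, combined with the theorem (Brylawski, Bonin--de Mier) that a matroid is determined by its cyclic flats and their ranks --- equivalently, the characterization of nested matroids you invoke at the outset. That theorem is citable, but since the paper never states it, the cleaner fix is elementary and uses only your lemma (1): show directly that $A$ is independent in $M$ if and only if $|A|\leq r$ and $|A\cap Z|\leq k$ (any circuit $C\subseteq A$ lies in a cyclic flat, hence either $C\subseteq Z$ and $|C|=k+1$, or $\cl(C)=E$ and $|C|=r+1$), and then match this against Lemma~\ref{basis}(b) for $F(1^k0^{|Z|-k}1^{r-k}0^{n-r-|Z|+k})$. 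The same remark applies to the forward direction, where restricting candidate cyclic flats to the distinguished flats $X_i$ silently uses the nested-matroid structure; the independent-set description closes that gap too. With that one clarification the proof is complete, and it is considerably more informative than the paper's.
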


\begin{proof}  
If $a = r,$ then $c=0$ and $F(1^a 0^b 1^c 0^d) \cong U_{r,n}.$ If $a < r$ and $d = 0,$ then 
$F(1^a 0^b 1^c 0^d)$ is the direct sum of the uniform matroid $U_{a,a+b}$ and $r-a$ isthmuses.   In the general case, when $a < r$ and $d > 0,$ $F(1^a 0^b 1^c 0^d)$ has two cyclic flats, $\{1,2,\ldots,a+b\}$ and $\{1,2,\ldots,n\}.$   
This argument can be reversed and the lemma follows.  
\end{proof} 

In contrast to the Tutte polynomials in the join-irreducible basis, the Tutte polynomials $T(F(1^a0^b1^c0^d))$ have  complicated formulas.  These formulas will be described in Section \ref{mi}.  The next lemma gives a formula for the case $b=c=1.$           

\begin{lemma}\label{TPPaving}  Let $r \geq 1$ and $n \geq 2.$    Then 
\[
T(F(1^{r-1} 0 1 0^{n-r-1})) =  T(F(1^r  0^{n-r}))- (x + y -xy).  
\]
\end{lemma}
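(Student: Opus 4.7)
My plan is to prove the identity by a direct rank-function comparison. I claim that $U_{r,n} = F(1^r 0^{n-r})$ and $F(1^{r-1}010^{n-r-1})$ have rank functions that agree on every subset of $\{1,\ldots,n\}$ except the distinguished set $H = \{1,\ldots,r\}$, which has rank $r$ in $U_{r,n}$ but rank $r-1$ in $F(1^{r-1}010^{n-r-1})$. Granted this, the corank-nullity expansion $T(M)=\sum_{A}(x-1)^{r-\rk(A)}(y-1)^{|A|-\rk(A)}$ yields
\[
T(U_{r,n}) - T(F(1^{r-1}010^{n-r-1})) = (x-1)^0(y-1)^0 - (x-1)^1(y-1)^1 = 1 - (x-1)(y-1) = x+y-xy
\]
in a single-term calculation, which is the asserted identity.

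To establish the rank comparison, I would first identify the bases of both matroids using Lemma \ref{basis}(b). For $\underline{s} = 1^r 0^{n-r}$ the partial sums reach $r$ at position $r$ and stay there, so every $r$-subset is a basis, confirming that this freedom matroid is $U_{r,n}$. For $\underline{s} = 1^{r-1}010^{n-r-1}$ the partial sums $1,2,\ldots,r-1,r-1,r,r,\ldots,r$ impose exactly one non-trivial constraint, namely at position $r$, which forces any basis to omit at least one element of $H$. Hence the bases of $F(1^{r-1}010^{n-r-1})$ are precisely the $r$-subsets of $\{1,\ldots,n\}$ distinct from $H$. From this description it is routine to read off that $\rk(A) = \min(|A|,r)$ for every $A \ne H$ (such an $A$ either sits inside a non-$H$ basis, or itself is a non-$H$ basis, or contains one by dropping an element), while $\rk(H) = r-1$ in the freedom matroid and $\rk(H) = r$ in the uniform matroid.

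The only delicate point is confirming that for every $A \ne H$ one can indeed exhibit a basis inside or around $A$ that avoids $H$; this is easy but needs to be checked in the three cases $|A| < r$, $|A| = r$, and $|A| > r$. An equally clean alternative that sidesteps this case analysis is an induction on $n$ using Lemma \ref{cdlemma}: applied at the descent at position $r+2$ of $\underline{s} = 1^{r-1}010^{n-r-1}$ it gives
\[
T(F(1^{r-1}010^{n-r-1})) = T(F(1^{r-1}010^{n-r-2})) + T(U_{r-1,n-1}),
\]
which combined with the companion recursion $T(U_{r,n}) = T(U_{r,n-1}) + T(U_{r-1,n-1})$ (Lemma \ref{cdlemma} applied to $1^r 0^{n-r}$) and the base case $n = r+1$, where $F(1^{r-1}01)$ is a circuit $U_{r-1,r}$ direct-summed with an isthmus and can be computed by formula (5.1), closes the induction.
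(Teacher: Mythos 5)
Your proof is correct, but it takes a genuinely different route from the paper's. You argue at the level of the rank function: you identify $F(1^{r-1}010^{n-r-1})$ as the paving matroid whose bases are all $r$-subsets except $H=\{1,\ldots,r\}$, so that $U_{r,n}=F(1^r0^{n-r})$ is its circuit--hyperplane relaxation, and then the corank--nullity expansion changes in exactly the one term indexed by $H$, giving $1-(x-1)(y-1)=x+y-xy$. The paper instead stays inside the $\mathcal{G}$-invariant framework: it computes $\mathcal{G}(F(1^r0^{n-r}))$ and $\mathcal{G}(F(1^{r-1}010^{n-r-1}))$ explicitly, observes that their difference is $r!(n-r)!([1^r0^{n-r}]-[1^{r-1}010^{n-r-1}])$, and applies $\mathsf{Sp}$ via Lemma \ref{cornerstone}. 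Your single-term computation is more elementary and self-contained, and it is in effect the standard proof of the circuit--hyperplane relaxation identity that the paper records (for general $M$) as Proposition \ref{circuithyperplane}; the paper's route has the advantage of also producing the $\mathcal{G}$-invariant version of that identity and of exercising the $\mathsf{Sp}$/cornerstone machinery that drives the rest of Section \ref{szkernel}. Your fallback induction via Lemma \ref{cdlemma} is also sound (the base case $n=r+1$ and the parallel recursion for $T(U_{r,n})$ both check out) and is exactly the alternative the paper alludes to when it remarks that the lemma ``can also be proved using induction and deletion-contraction.'' The only points worth making explicit are the standing hypothesis $n\ge r+1$ (needed for the sequence $1^{r-1}010^{n-r-1}$ to exist and for the basis-exchange step that extends a set $A\ne H$ to a basis avoiding $H$) and the routine verification that $\rk(A)=\min(|A|,r)$ for $A\ne H$, which you correctly flag and which does go through.
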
 

%\begin{proof} 
%We proceed by induction on $n$ and $r.$  We begin with the case $r=1$ and $n \geq 2.$  Then 
%by equation (5.1), 
%\begin{eqnarray*}
%T(F(10^{n-1})) - T(F(010^{n-2})) 
%&=& 
%x + y + y^2 + \cdots + y^{n-1} - y(x + y + y^2 +\cdots+y^{n-2})
%\\
%&=& x + y - xy . 
%\end{eqnarray*}
% Now suppose that $r ,n \geq 2.$    By Lemma \ref{cdlemma}, 
%\begin{eqnarray*}
%T(F(1^{r} 0^{n-r}) &=& T(F(1^{r} 0^{n - r - 1}))  + T(F(1^{r-1} 0^{n-r}))
%\\
%T(F(1^{r-1} 0 10^{n-r-1})) &=& T(F(1^r 0^{n-r-1}))  + T(F(1^{r-2} 01 0^{n-r-1})).
%\end{eqnarray*} 
%Hence, 
%\[
%T(F(1^r0^{n-r})) - T(F(1^{r-1}010^{n-r-1})) = T(F(1^{r-1} 0^{n-r})) - T(F(1^{r-2} 01 0^{n-r-1}))    
%\] 
%the lemma follows by induction.    
%\end{proof}

\begin{proof}
A simple calculation yields 
\begin{eqnarray*} 
\mathcal{G}(F(1^r0^{n-r})) &=& \binom {n}{r} r! (n-r)! [1^r0^{n-r}], 
\\
\mathcal{G}(F(1^{r-1}010^{n-r-1})) &=& \left( \binom {n}{r}-1 \right)  r! (n-r)! [1^r0^{n-r}], 
+ r!(n-r)![  1^{r-1}010^{n-r-1}  ] 
\end{eqnarray*}
and hence 
\[
\mathcal{G}(F(1^r0^{n-r})) - 
\mathcal{G}(F(1^{r-1}010^{n-r-1})) = r! (n-r)!( [1^r0^{n-r}] -  [  1^{r-1}010^{n-r-1}  ]).
\]
We can now finish the proof by applying $\mathsf{Sp}$ and  Lemma \ref{cornerstone}. 
\end{proof}

Lemma \ref{TPPaving} can also be proved using induction and deletion-contraction.   The method of proof yields a more general result, which we state without a proof.    
(For the definition of circuit-hyperplane relaxation, see \cite[p.~39]{ox}.)  

\begin{prop} \label{circuithyperplane} 
Let $M^{\prime}$ be obtained from $M$ by a circuit-hyperplane relaxation.  Then 
\begin{eqnarray*} 
\mathcal{G}(M^{\prime}) - 
\mathcal{G}(M) &=&   r! (n-r)!( [1^r0^{n-r}] -  [  1^{r-1}010^{n-r-1}  ]),  
\\
T(M^{\prime})-T(M) &=& x+y-xy. 
\end{eqnarray*} 
\end{prop}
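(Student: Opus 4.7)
The plan is to prove the $\mathcal{G}$-invariant identity by tracking precisely which permutations contribute different rank sequences to $\mathcal{G}(M)$ versus $\mathcal{G}(M')$, and then to deduce the Tutte polynomial identity by applying $\mathsf{Sp}$ and invoking Lemma \ref{cornerstone}.

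The first observation is that since $H$ is a circuit, $|H| = \rk_M(H) + 1$, and since $H$ is a hyperplane, $\rk_M(H) = r - 1$; hence $|H| = r$. A routine check shows that relaxation changes the rank function only at $H$ itself: $\rk_{M'}(H) = r$, while every other subset $A \subseteq \{1,\ldots,n\}$ satisfies $\rk_{M'}(A) = \rk_M(A)$ (for $A \supsetneq H$ both ranks already equal $r$ because $H$ is a hyperplane, and otherwise the independent subsets of $A$ are the same in $M$ and $M'$). Consequently, a permutation $\pi$ contributes a different rank sequence to $\mathcal{G}(M')$ than to $\mathcal{G}(M)$ if and only if some initial segment of $\pi$ equals $H$, and since $|H| = r$ this is equivalent to $\{\pi(1), \ldots, \pi(r)\} = H$. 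There are exactly $r!(n-r)!$ such permutations.

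Next I would compute the two rank sequences for these permutations. In $M$, since $H$ is a rank-$(r-1)$ circuit, any $r-1$ of its elements form a basis of $H$, so the ranks of the first $r$ initial segments are $1, 2, \ldots, r-1, r-1$; the $(r+1)$-st element lies outside $H$ and pushes the rank up to $r$, after which nothing changes. This gives rank sequence $1^{r-1} 0 1 0^{n-r-1}$. In $M'$, the first $r$ elements exhaust the basis $H$, so the prefix ranks climb $1, 2, \ldots, r$, yielding $1^r 0^{n-r}$. Summing over the $r!(n-r)!$ affected permutations produces the stated formula for $\mathcal{G}(M') - \mathcal{G}(M)$.

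The Tutte polynomial identity then follows by applying $\mathsf{Sp}$ to the $\mathcal{G}$-invariant identity and invoking Lemma \ref{cornerstone} with $\underline{r} = 1^{r-1}$ and $\underline{s} = 0^{n-r-1}$, so that $\lambda = \rho = r-1$. The factorial prefactor $r!(n-r)!$ cancels the denominator $(\lambda+1)!(n-\lambda-1)!$, and the exponents of $(x-1)$ and $(y-1)$ collapse, leaving exactly $x + y - xy$. There is no serious obstacle in the argument; the only step that demands care is the precise identification of the two rank sequences on the $r!(n-r)!$ affected permutations, since everything else flows mechanically from that identification.
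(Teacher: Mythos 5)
Your proof is correct and fills in, with full details, exactly the argument the paper gestures at (the paper states this proposition without proof, saying it follows from the method used for Lemma \ref{TPPaving}): you identify the $r!(n-r)!$ permutations whose initial length-$r$ segment is the relaxed circuit-hyperplane, show these are precisely the permutations whose rank sequence changes from $1^{r-1}010^{n-r-1}$ to $1^r0^{n-r}$, and then apply $\mathsf{Sp}$ via Lemma \ref{cornerstone}. One remark: the exponent of $(y-1)$ in the displayed formula of Lemma \ref{cornerstone} should be $\lambda-\rho$ rather than $\lambda-\rho-1$ (as its own proof shows), and your claim that the exponents ``collapse'' to give $x+y-xy$ is correct for that corrected formula with $\lambda=\rho=r-1$.
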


To show that $\{T(F(\underline{m})): \, \underline{m} \in \mathsf{M}\}$ is a basis, we first show that 
the $\mathcal{G}$-invariants $\mathcal{G}(F(\underline{m})),\, \underline{m} \in \mathsf{M}$ form a basis of the quotient vector space $\mathcal{G}(n,r)/\ker \mathsf{Sp}.$     To see this, 
we use equations (3.1) and (4.1) to obtain, for $\underline{m} \in \mathsf{M},$  
\begin{eqnarray*} 
\mathcal{G}(F(\underline{m})) &=& \sum_{\underline{r} \trianglerighteq \underline{m}}  g_{\underline{r}}(F(\underline{m})) 
[\underline{r}]   
\\
&= &  
\sum_{\underline{r} \trianglerighteq \underline{m}}  g_{\underline{r}}(F(\underline{m}))   
\left(  \sum_{\underline{m}^{\prime} \trianglerighteq \underline{r}}  
\alpha_{\underline{m}^{\prime}} [\underline{m}^{\prime}] \right)   
\\
& = & 
\sum_{\underline{m}^{\prime} \trianglerighteq \underline{m}}  h_{\underline{m}^{\prime}}(F(\underline{m}))   
[\underline{m}^{\prime}]     
\end{eqnarray*}
in the quotient $\mathcal{G}(n,r)/\ker \mathsf{Sp}.$  Note that $h_{\underline{m}}(F(\underline{m})) = g_{\underline{m}}(F(\underline{m}))$ and hence $h_{\underline{m}}(F(\underline{m})) \neq 0.$    
Applying $\mathsf{Sp},$ we obtain 
\[
\sum_{\underline{m}^{\prime} \trianglerighteq \underline{m}}  h_{\underline{m}^{\prime}}(F(\underline{m}))  \mathsf{Sp} [\underline{m}^{\prime}]= T(F(\underline{m})),   
\eqno(5.3)\] 
a triangular system of equations with non-zero diagonal coefficients relating the sets $\{T(F(\underline{m})): \, \underline{m} \in \mathsf{M}\}$ and $\{\mathsf{Sp}[\underline{m}]:\, \underline{m} \in \mathsf{M}\}$ in $\mathcal{T}(n,r).$  
However, Theorem \ref{main1} implies that $\{\mathsf{Sp} [\underline{m}]:\, \underline{m} \in \mathsf{M}\}$ is a basis for $\mathcal{T}(n,r).$   Hence, we obtain the following theorem.      

\begin{thm}\label{main2}
The Tutte polynomials $T(F(\underline{m})),$ where $\underline{m} \in \mathsf{M},$ form a basis for $\mathcal{T}(n,r).$    
\end{thm}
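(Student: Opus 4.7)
The plan is to imitate, at the level of meet-irreducibles, the triangularity argument that was already used in Theorem \ref{freedombasis} for the full symbol basis, combined with the fact (already established in Theorem \ref{main1}) that $\{\mathsf{Sp}[\underline{m}]:\underline{m}\in\mathsf{M}\}$ is a basis of $\mathcal{T}(n,r)$. The goal is to show that the transition from $\{\mathsf{Sp}[\underline{m}]\}$ to $\{T(F(\underline{m}))\}$ is governed by a triangular matrix with non-zero diagonal on the meet-irreducible poset, so the latter set is also a basis.

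First, I would pass everything to the quotient $\mathcal{G}(n,r)/\ker\mathsf{Sp}$. By (3.1), in this quotient every symbol $[\underline{r}]$ is an integral linear combination of $[\underline{m}']$ with $\underline{m}'\trianglerighteq\underline{r}$. By Lemma \ref{terms} (formula (4.1)), $\mathcal{G}(F(\underline{m}))=\sum_{\underline{r}\trianglerighteq\underline{m}}g_{\underline{r}}(F(\underline{m}))[\underline{r}]$. Substituting the first expansion into the second yields, modulo $\ker\mathsf{Sp}$, an expression
\[
\mathcal{G}(F(\underline{m}))\;\equiv\;\sum_{\underline{m}'\trianglerighteq\underline{m}}h_{\underline{m}'}(F(\underline{m}))\,[\underline{m}'],
\]
because composing two order-respecting substitutions stays order-respecting. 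Writing this out as in the block of displayed equations preceding (5.3) makes transparent both the triangularity (only $\underline{m}'\trianglerighteq\underline{m}$ can appear) and the identification of the diagonal coefficient $h_{\underline{m}}(F(\underline{m}))=g_{\underline{m}}(F(\underline{m}))$, which is non-zero by Lemma \ref{terms}(b).

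Next, I apply $\mathsf{Sp}$, which sends $\mathcal{G}(F(\underline{m}))$ to $T(F(\underline{m}))$ and kills $\ker\mathsf{Sp}$. This converts the triangular relation into the triangular system (5.3) of Tutte polynomials, relating the family $\{T(F(\underline{m})):\underline{m}\in\mathsf{M}\}$ to the family $\{\mathsf{Sp}[\underline{m}]:\underline{m}\in\mathsf{M}\}$ inside $\mathcal{T}(n,r)$. Since Theorem \ref{main1} tells us that the latter family is a basis of $\mathcal{T}(n,r)$ (of cardinality $|\mathsf{M}|=r(n-r)+1$) and the change-of-basis matrix is triangular with non-zero diagonal entries, it is invertible; hence $\{T(F(\underline{m})):\underline{m}\in\mathsf{M}\}$ is also a basis.

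No single step presents a real obstacle: the whole strategy rests on the order-compatible triangularity, and the only thing to check carefully is that the two successive upper-triangular substitutions compose to give an upper-triangular matrix whose diagonal entry $h_{\underline{m}}(F(\underline{m}))$ is non-zero. This is precisely the point where one must observe that the inner sum of (3.1), when the index of summation starts at $\underline{r}=\underline{m}$, contributes $\alpha_{\underline{m}}[\underline{m}]=[\underline{m}]$ (the meet-irreducible expansion of a meet-irreducible is itself), so the diagonal coefficient equals $g_{\underline{m}}(F(\underline{m}))$, which is non-zero by Lemma \ref{terms}(b).
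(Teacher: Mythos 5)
Your proposal is correct and follows essentially the same route as the paper: expand $\mathcal{G}(F(\underline{m}))$ via (4.1), rewrite each symbol via (3.1) in the quotient $\mathcal{G}(n,r)/\ker\mathsf{Sp}$, observe that the composition of the two order-respecting expansions is triangular with diagonal entry $g_{\underline{m}}(F(\underline{m}))\neq 0$ by Lemma \ref{terms}(b), and then apply $\mathsf{Sp}$ and invoke Theorem \ref{main1}. This is exactly the derivation of the triangular system (5.3) given in the text, so nothing further is needed.
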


\noindent 
We  call $\{ T(F(\underline{m})):\, \underline{m} \in \mathsf{M}\}$ the {\sl meet-irreducible basis} of 
$\mathcal{T}(n,r).$

\section{Building linear relations} \label{LR}

In hindsight, the key to finding linear relations on Tutte polynomials of freedom matroids is Lemma \ref{TPPaving}. This lemma says that the difference $T(F(1^r0^{n-r})) - T(F(1^{r-1}010^{n-r-1}))$ equals $x + y - xy,$ a polynomial not depending on $r$ and $n.$    
We will show that similar assertions hold for linear combinations derived from height-$2$ intervals in $(\mathcal{S}(n,r),\trianglerighteq).$  The next lemma gives the smallest case.     

\begin{lemma}\label{smallest} 
\[
T(F(1010)) - T(F(1001))
-  T(F(0110)) +  T(F(0101)) = x + y - xy.
\eqno(6.1)\]
\end{lemma}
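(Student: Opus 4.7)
The plan is to apply the deletion-contraction recurrence of Lemma \ref{cdlemma} to each of the four freedom matroids on the left-hand side and then invoke Lemma \ref{TPPaving} at the very end. Each of the four defining sequences $1010$, $1001$, $0110$, $0101$ has at least one descent, so Lemma \ref{cdlemma} is available in every case.

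Choosing a convenient descent in each sequence, I would obtain
\begin{align*}
T(F(1010)) &= T(F(110)) + T(F(010)), \\
T(F(1001)) &= T(F(101)) + T(F(001)), \\
T(F(0110)) &= T(F(011)) + T(F(010)), \\
T(F(0101)) &= T(F(011)) + T(F(001)).
\end{align*}
For $1010,$ I use the descent at position $2;$ the three other sequences each have only one descent and the choice is forced.

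Substituting these four identities into the left side of $(6.1)$, the six contributions coming from $F(010),$ $F(001),$ and $F(011)$ cancel in pairs, leaving just $T(F(110))-T(F(101)).$ Finally, Lemma \ref{TPPaving} applied with $r=2$ and $n=3$ (so that $1^r0^{n-r}=110$ and $1^{r-1}010^{n-r-1}=101$) yields $T(F(110))-T(F(101))=x+y-xy,$ which is exactly the required identity. The only place that calls for judgment is selecting descents in the first display that produce matching intermediate $(3,r)$-matroids across the four terms; once that bookkeeping is arranged, Lemma \ref{TPPaving} finishes the proof with no separate Tutte polynomial computations required.
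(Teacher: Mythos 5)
Your proof is correct and follows essentially the same route as the paper's: apply Lemma \ref{cdlemma} to each of the four terms so that all but two of the resulting size-$3$ Tutte polynomials cancel. The only (harmless) difference is your choice of descent in $1010$ — the paper splits at position $4$, leaving the residual $T(F(100))-T(F(010))$, which it evaluates directly, whereas your split at position $2$ leaves $T(F(110))-T(F(101))$, which you correctly dispatch by citing Lemma \ref{TPPaving} with $r=2$, $n=3$.
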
 
\begin{proof}  By Lemma \ref{cdlemma},      
\begin{eqnarray*} 
&& T(F(1010)) = T(F(101)) + T(F(100)),  
\\
&& T(F(1001)) = T(F(101)) + T(F(001)),  
\\
&&  T(F(0110)) = T(F(011)) + T(F(010)), 
\\
&&  
T(F(0101)) = T(F(011)) + T(F(001)).
\end{eqnarray*}
Hence, the left-hand side of equation (6.1) equals $T(F(100)) - T(F(010)),$  which in turn, equals 
$ (x + y + y^2) - (xy + y^2)  $ by direct computation.   
\end{proof}

We introduce a compact notation for $4$-term linear combinations.  Define 
\[
L(\underline{r}_1 \| \underline{r}_2 \| \underline{r}_3) =  T(F(\underline{r}_1 10 \underline{r}_2 10 \underline{r}_3))
-   T(F(\underline{r}_1 10 \underline{r}_2 01 \underline{r}_3))
-   T(F(\underline{r}_1 01 \underline{r}_2 10 \underline{r}_3)) + 
 T(F(\underline{r}_1 01 \underline{r}_2 01 \underline{r}_3)). 
\]

\begin{prop} \label{nodescent}   
\[
L (0^a1^b  \|  0^c1^d  \| 0^e 1^f )  = x^f y^a(x + y - xy).
\]
\end{prop}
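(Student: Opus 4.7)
The strategy is to reduce the identity to Lemma~\ref{smallest} by a two-phase simplification.

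\emph{Phase 1 (factor out $y^a x^f$).} In every one of the four defining sequences $\underline{r}_1 \alpha \underline{r}_2 \beta \underline{r}_3$ with $\alpha,\beta \in \{10,01\}$, the leading $a$ positions form $0^a$, preceding any $1$, so they index loops in the corresponding freedom matroid. Symmetrically, the trailing $f$ positions are the $1^f$ of $\underline{r}_3$, and each is added as an isthmus with empty free range (the next position in the sequence, when it exists, is also a $1$ of the trailing block). Multiplicativity of the Tutte polynomial over direct sums gives the common factor $y^a x^f$ in all four terms of $L$, reducing the claim to $L(1^b \| 0^c 1^d \| 0^e) = x + y - xy$.

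\emph{Phase 2 (induction on $b+c+d+e$).} The base case $b=c=d=e=0$ is Lemma~\ref{smallest}. For the inductive step, I peel off one unit from a positive parameter by applying Lemma~\ref{cdlemma} to an appropriate descent in each of the four sequences. To illustrate with $b \geq 1$, the four sequences are
\[
A = 1^{b+1} 0^{c+1} 1^{d+1} 0^{e+1}, \quad B = 1^{b+1} 0^{c+1} 1^d 01 0^e,
\]
\[
C = 1^b 01 0^c 1^{d+1} 0^{e+1}, \quad D = 1^b 01 0^c 1^d 01 0^e.
\]
Apply Lemma~\ref{cdlemma} in $A, B$ at the descent at position $b+2$ (immediately after $1^{b+1}$), and in $C, D$ at the descent at position $b+1$ (available because $b \geq 1$, so the last $1$ of $1^b$ meets the $0$ of the ``$01$''). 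Setting $X_1 = 1^{b+1} 0^c 1^{d+1} 0^{e+1}$ and $X_2 = 1^{b+1} 0^c 1^d 01 0^e$, one obtains
\[
T(F(A)) = T(F(X_1)) + T(F(A')), \quad T(F(C)) = T(F(X_1)) + T(F(C')),
\]
\[
T(F(B)) = T(F(X_2)) + T(F(B')), \quad T(F(D)) = T(F(X_2)) + T(F(D')),
\]
where $A', B', C', D'$ are the four defining sequences of $L(1^{b-1} \| 0^c 1^d \| 0^e)$. In the alternating sum the $T(F(X_1))$ terms cancel between $A$ and $C$, and the $T(F(X_2))$ terms cancel between $B$ and $D$, giving $L(1^b \| 0^c 1^d \| 0^e) = L(1^{b-1} \| 0^c 1^d \| 0^e)$. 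Entirely analogous cancellations peel off $c$, $d$, and $e$, and driving all four parameters to zero lands us on Lemma~\ref{smallest}.

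\emph{Main obstacle.} The principal subtlety is the bookkeeping: for each parameter being peeled off, the descent position used in the ``$10$''-sequences $A, B$ differs from that used in the ``$01$''-sequences $C, D$, since an ``$01$'' pattern acquires a descent only after being rewritten to absorb a $1$ from an adjacent block. One must verify in each case that the two ``extra'' sequences produced by Lemma~\ref{cdlemma} agree across the cancelling pairs $\{A, C\}$ and $\{B, D\}$. The non-emptiness hypothesis (e.g.\ $b \geq 1$) is precisely what guarantees the required descent exists in the ``$01$''-sequences.
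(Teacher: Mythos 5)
Your proof is correct, and its endgame differs from the paper's in a way worth noting. Both arguments share Phase~1 (factoring out $y^a x^f$ by multiplicativity over direct sums) and both use Lemma~\ref{cdlemma} as the workhorse; the paper likewise peels off $d$ one unit at a time exactly as you do. But for the remaining parameters the paper does not continue the induction: it applies four deletion--contraction identities to $L(1^b \| 0^c \| 0^e)$ simultaneously, collapses the alternating sum in one step to $T(F(1^{b+1}0^{c+e+2})) - T(F(1^b 01 0^{c+e+1}))$, and then invokes Lemma~\ref{TPPaving} (whose proof goes through the $\mathcal{G}$-invariant and Lemma~\ref{cornerstone}). You instead drive all four parameters to zero and land on the explicitly computed identity of Lemma~\ref{smallest} in $\mathcal{T}(4,2)$. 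What your route buys is self-containment at the Tutte-polynomial level --- no appeal to Lemma~\ref{TPPaving} or the $\mathcal{G}$-invariant machinery --- and it anticipates the uniform ``descent-tree'' reduction the paper deploys later for the more general Theorem~\ref{Linterval}; what the paper's route buys is a shorter computation once Lemma~\ref{TPPaving} is in hand. One small correction to your bookkeeping: the cancelling pairs are $\{A,C\}$ and $\{B,D\}$ only when peeling $b$ or $c$ (where the surviving terms are, respectively, the deletions and the contractions); when peeling $d$ or $e$ the extra terms instead agree across $\{A,B\}$ and $\{C,D\}$. The cancellations do occur in every case, so the induction goes through, but the general principle as you state it is not literally the one that operates in all four reductions.
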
 
\begin{proof}  
We begin by observing that $F(0^a \underline{u} 1^f)$ is the direct sum of $F(\underline{u}),$ $a$ loops, and $f$ isthmuses 
and hence, 
\[
T(F(0^a \underline{u} 1^f)) = x^f y^a T(F(\underline{u}))   
\]
and 
\[
L (0^a1^b  \|  0^c1^d  \| 0^e 1^f )  = x^f y^a L (1^b  \|  0^c1^d  \| 0^e).
\]
Thus, it suffices to prove 
\[
L (1^b  \|  0^c1^d  \| 0^e)  = x + y - xy.
\]
We first show that value of $L (1^b  \|  0^c1^d  \| 0^e)$ is independent of $d.$  To do this, we use Lemma \ref{cdlemma} at the descent indicated by a $\,\check{\,}\,$ to obtain the 
deletion-contraction identities  
\begin{eqnarray*} 
T(F(1^b  10 0^c1^d 1\check{0}  0^e)) &=& T( F(1^b 10^{c+1}1^{d+1} 0^e)) + T( F(1^{b} 10  0^c 1^{d-1} 10 0^e)), 
\\
T(F(1^b  10 0^c1^d \check{0} 1  0^e)) &=& T( F(1^b10^{c+1}1^{d+1}0^e)) + T(F(1^b  10 0^c 1^{d-1} 01 0^e)), 
\\
T(F(1^b 01 0^c1^d 1\check{0} 0^e)) &=& T(F(1^b 01 0^{c}1^{d+1} 0^e)) + T(F(1^b 01 0^c 1^{d-1} 10 0^e)), 
\\
T(F(1^b 01 0^c1^d \check{0}1 0^e)) &=& T(F(1^b 010^{c}1^{d+1} 0^e)) + T(F(1^b 01 0^c 1^{d-1} 01 0^e)).   
\end{eqnarray*} 
These identities imply that  
\[
L (1^b  \|  0^c1^d  \| 0^e) = L (1^b  \|  0^c1^{d-1}  \| 0^e).  
\]
By induction, 
\[
{L} (1^b  \|  0^c1^d  \| 0^e) = {L} (1^b  \|  0^c \| 0^e).
\]

We deal next with $c.$   We use the deletion-contraction recursions (at the element indicated by a $\check{\,}$)  
\begin{eqnarray*} 
T(F(1^b 10 0^c 1\check{0} 0^e)) &=& T(F(1^b 10 0^{c} 1 0^e)) + T(F(1^{b+1}  0^{c+e+2})), 
\\
T(F(1^b 1\check{0} 0^c 01 0^e)) &=& T(F(1^b 1 0^{c} 0 1 0^e)) + T(F(1^{b} 0^{c+2} 1 0^e)), 
\\
T(F(1^b 01 0^c 1\check{0} 0^e)) &=& T(F(1^b 01 0^{c}1 0^e)) + T(F(1^b 01 0^{c+e+1})), 
\\
T(F(1^b 01 \check{0}0^{c-1} 01 0^e)) &=& T(F(1^b 010^{c}1 0^e)) + T(F(1^b 0^{c+2} 1 0^e)). 
\end{eqnarray*}
to conclude that 
\[
{L} (1^b  \|  0^c \| 0^e) =  
T(F(1^{b+1} 0^{c+ e +2})) - T(F(1^b 01 0^{c+e+1})).
\]
By Lemma \ref{TPPaving},  
\[
L (1^b  \|  0^c \| 0^e) =  x + y -xy,  
\]
completing the proof.  
\end{proof}

Noting that a bit sequence has no descent if and only if it equals $0^a 1^b,$  Proposition \ref{nodescent} allows us to calculate    
$L (\underline{r}_1 \| \underline{r}_2 \| \underline{r}_3)$ when none of the bit sequences $\underline{r}_1,  \underline{r}_2, \underline{r}_3$ has a descent.   We shall describe a {\sl reduction} which writes $L (\underline{r}_1 \| \underline{r}_2 \| \underline{r}_3)$ as a linear combination of $4$-term linear combinations 
$L (\underline{s}_1 \| \underline{s}_2 \| \underline{s}_3),$ where $\underline{s}_1, \underline{s}_2,$ or $\underline{s}_3,$ have fewer descents. To keep track of the reductions, we use a binary tree.  

Let $\underline{u}$ be a bit sequence.  A {\sl descent tree} of $\underline{u}$ is a binary tree constructed in the following way.  Start with the sequence $\underline{u}.$  If $\underline{u}$ has a descent, say 
$\underline{u} = \underline{u}_1 10 \underline{u}_2,$ then add two descendants 
$\underline{u}_1 1 \underline{u}_2$ and $\underline{u}_1 0 \underline{u}_2$ to the node $\underline{u}.$  Continue for each bit sequence in the partially constructed tree until all the leaves are bit sequences with no descents.  
There are many descent trees, one for each ordering by which we choose the descents.

\showhide{
\begin{figure}
  \centering
  \begin{tikzpicture}[scale=1]
%  \node[inner sep = 0.3mm] (l) at (4.5,1.7) {\small $11000$};
%  \node[inner sep = 0.3mm] (b) at (5.3,1.1) {\small $10100$};  
%\foreach \from/\to in {b/l} \draw(\from)--(\to);  
\node[inner sep = 0.3mm] (rll3) at (9.9,-0.3) {\small $01$}; 

\node[inner sep = 0.3mm] (rlr3) at (8.8,-0.3) {\small $00$};
\node[inner sep = 0.3mm] (rr2) at (11.1,0.4) {\small $011$};
\node[inner sep = 0.3mm] (rl2) at (9.3,0.4) {\small $0\underline{10}$};
%\node[inner sep = 0.3mm] (lrl3) at (6.3,-0.3) {\small $00$};
%\node[inner sep = 0.3mm] (lrr3) at (7.4,-0.3) {\small $01$};
\node[inner sep = 0.3mm] (lr2) at (6.9,0.4) {\small $001$};
\node[inner sep = 0.3mm] (ll1) at (5.1,0.4) {\small $000$};
  \node[inner sep = 0.3mm] (l1) at (6.1,1) {\small $00\underline{10}$};
  \node[inner sep = 0.3mm] (r1) at (9.9,1) {\small $01\underline{10}$};
  \node[inner sep = 0.3mm] (t) at (8.0,1.9) {\small $0\underline{10}10$};
  \foreach \from/\to in {t/l1,t/r1,ll1/l1,lr2/l1,rl2/r1,rr2/r1,rll3/rl2,rlr3/rl2} \draw(\from)--(\to);
  \end{tikzpicture}
 \caption{A descent tree of $01010;$  the leaves yield the Tutte polynomial $x^2y + xy +xy^2 +y^2 + y^3 $ of $F(10101).$} 
 \label{descenttree}
\end{figure}
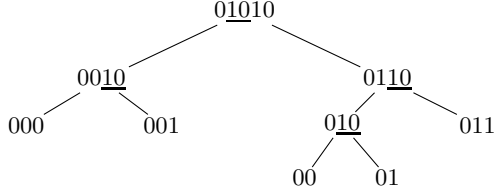
}

By Lemma \ref{cdlemma}, each branching of a descent tree gives a deletion-contraction decomposition of a freedom matroid.  Hence, a descent tree of $\underline{u}$ gives a decomposition of the freedom matroid $F(\underline{u})$   into a linear combination of
direct sums of loops and isthmuses in the Tutte-Grothendieck ring of matroids (see \cite[p.~243]{Brylawski} or \cite[ Section 6.2, p.~124]{BryOx}).  This yields the following lemma.  

\begin{lemma}  \label{leaves}   
The multiset of leaves of a descent tree of a bit sequence $\underline{u}$ depends only on $\underline{u}.$  Under the specialization $0^a 1^b \mapsto x^by^a,$ the sum over the multiset of leaves in a descent tree of $\underline{u}$  
equals the Tutte polynomial $T(F(\underline{u});x,y).$  
\end{lemma}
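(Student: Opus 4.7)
The plan is to prove both parts by induction on $|\underline{u}|$, establishing the polynomial identity first and then using it to deduce well-definedness of the leaf multiset.

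For the polynomial identity, the base case is $\underline{u} = 0^a 1^b$ with no descents: the tree is a single leaf, and $F(\underline{u})$ is a direct sum of $a$ loops and $b$ isthmuses, whose Tutte polynomial is $x^b y^a$, matching the specialization of the unique leaf. In the inductive step, let $\underline{u} = \underline{r}_1 10 \underline{r}_2$ be the descent used at the root branching of the given tree. Lemma \ref{cdlemma} expresses $T(F(\underline{u}))$ as the sum of $T(F(\underline{r}_1 1 \underline{r}_2))$ and $T(F(\underline{r}_1 0 \underline{r}_2))$; each child has length $|\underline{u}|-1$ and carries its own (sub)descent tree, so the induction hypothesis applied to both subtrees yields the result. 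Because the argument never distinguishes which descent is chosen at any branching, the identity holds for every descent tree of $\underline{u}$.

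For part (a), every leaf is of the form $0^a 1^b$, and the correspondence $(a,b) \mapsto x^b y^a$ is an injection into the monomials of $\mathbb{K}[x,y]$. Hence the multiset of leaves is recoverable from the multiset of monomials it produces, namely from the polynomial obtained by summing those monomials. By part (b) that polynomial equals $T(F(\underline{u});x,y)$, which depends only on $\underline{u}$, so the multiset of leaves does too.

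I do not anticipate any serious obstacle. The one subtlety worth flagging is that reducing at a descent can create a new descent elsewhere in the sequence (for instance, $110 \mapsto 10$ eliminates one descent but leaves another in one child), so an induction on the number of descents is awkward. Induction on length circumvents this entirely, since each branching strictly decreases the length by one and the process therefore terminates in at most $|\underline{u}|-1$ steps.
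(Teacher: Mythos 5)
Your proof is correct, and it reverses the logical order of the paper's argument in a way worth noting. The paper proves both assertions at once by observing that each branching is a deletion--contraction (Lemma~\ref{cdlemma}) and then appealing to the classical Tutte--Grothendieck ring: a descent tree decomposes $F(\underline{u})$ into a linear combination of direct sums of loops and isthmuses, and uniqueness of that decomposition in the ring gives the well-definedness of the leaf multiset, with the Tutte polynomial identity falling out of the universal property. You instead prove the polynomial identity first, by a self-contained induction on length (base case $0^a1^b$, inductive step via Lemma~\ref{cdlemma} at the root, applied to the two subtrees), and then recover the first assertion from the second: since distinct no-descent sequences $0^a1^b$ map to distinct monomials $x^by^a$, the multiplicity of each leaf is read off as a coefficient of $T(F(\underline{u}))$, which is independent of the tree. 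Your route avoids citing the Tutte--Grothendieck machinery and uses only what the paper has already established; the paper's route is shorter given that machinery and makes the conceptual point that a descent tree is literally a computation in the Tutte--Grothendieck ring. Your flagged subtlety (inducting on length rather than on the number of descents) is well taken and is exactly why the induction closes cleanly.
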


We define the following functions:  
\[ f(\underline{u}) = T(F(\underline{u});1,y),  
\,\,
g(\underline{u})  = T(F(\underline{u});x,1), 
\,\, 
\tau(\underline{u}) = T(F(\underline{u});1,1).   
\]
%\begin{eqnarray*} 
%&& f(\underline{u}) = T(F(\underline{u});1,y),  
%\\
%&& g(\underline{u})  = T(F(\underline{u});x,1), 
%\\ 
%&& \tau(\underline{u}) = T(F(\underline{u});1,1).   
%\end{eqnarray*}   
%Note that $\tau(\underline{u})$ is the number of leaves in a descent tree of $\underline{u}$ or the number of bases in $F(\underline{u}).$

\begin{thm}\label{Linterval}   
$\,
L(\underline{r}_1 \| \underline{r}_2 \| \underline{r}_3) =  f(\underline{r}_1) \tau(\underline{r}_2)g(\underline{r}_3) (x + y - xy).
$
\end{thm}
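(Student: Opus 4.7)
The plan is to reduce everything to Proposition \ref{nodescent} by applying deletion-contraction independently inside each of the three blocks $\underline{r}_1, \underline{r}_2, \underline{r}_3$. The key first step is a \emph{block-wise splitting identity}: if $\underline{r}_1 = \underline{u}10\underline{v}$ and the indicated $10$ is a descent of $\underline{r}_1$, then in each of the four sequences $\underline{r}_1 \epsilon_1 \underline{r}_2 \epsilon_2 \underline{r}_3$ (with $\epsilon_1, \epsilon_2 \in \{10, 01\}$) appearing in $L$, this descent sits at the same absolute position, since the blocks $\underline{r}_i$ are unaffected by the $\epsilon$-flips. Applying Lemma \ref{cdlemma} to each of the four Tutte polynomials and taking the alternating sum yields
\[
L(\underline{u}10\underline{v} \| \underline{r}_2 \| \underline{r}_3) = L(\underline{u}1\underline{v} \| \underline{r}_2 \| \underline{r}_3) + L(\underline{u}0\underline{v} \| \underline{r}_2 \| \underline{r}_3),
\]
and analogous identities hold for descents internal to $\underline{r}_2$ or $\underline{r}_3$.

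Iterating this reduction along a descent tree of each block $\underline{r}_1, \underline{r}_2, \underline{r}_3$ expands
\[
L(\underline{r}_1 \| \underline{r}_2 \| \underline{r}_3) = \sum_{i,j,k} L(\ell^{(1)}_i \| \ell^{(2)}_j \| \ell^{(3)}_k),
\]
where the triple sum ranges over triples of leaves of the three descent trees; each leaf, having no descents, has the form $0^a 1^b$. Proposition \ref{nodescent} then gives $L(\ell^{(1)}_i \| \ell^{(2)}_j \| \ell^{(3)}_k) = x^{b_k}\, y^{a_i}\,(x+y-xy)$, where $a_i$ is the leading $0$-count of $\ell^{(1)}_i$ and $b_k$ is the trailing $1$-count of $\ell^{(3)}_k$. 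Crucially, this value does not involve the middle leaf $\ell^{(2)}_j$ at all, nor the trailing part of $\ell^{(1)}_i$ or leading part of $\ell^{(3)}_k$, so the triple sum factors:
\[
L(\underline{r}_1 \| \underline{r}_2 \| \underline{r}_3) = (x+y-xy)\Bigl(\sum_i y^{a_i}\Bigr)\Bigl(\sum_j 1\Bigr)\Bigl(\sum_k x^{b_k}\Bigr).
\]

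Finally, I would invoke Lemma \ref{leaves} to identify each bracketed sum as a specialization of a Tutte polynomial of a freedom matroid. The leaf specialization $0^a 1^b \mapsto x^b y^a$ of Lemma \ref{leaves} reproduces the full Tutte polynomial, so setting $x = 1$ gives $\sum_i y^{a_i} = T(F(\underline{r}_1); 1, y) = f(\underline{r}_1)$; setting $x = y = 1$ gives $\sum_j 1 = \tau(\underline{r}_2)$; and setting $y = 1$ gives $\sum_k x^{b_k} = g(\underline{r}_3)$. Assembling these yields the theorem. No step is genuinely hard; the only substantive point is the block-wise splitting identity, which rests on the observation that a descent internal to one block is untouched by the $10 \leftrightarrow 01$ flips at the two separator positions, so Lemma \ref{cdlemma} applies uniformly to all four terms of $L$ at once.
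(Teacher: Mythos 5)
Your proof is correct and follows the same route as the paper: the block-wise splitting identity obtained by applying Lemma \ref{cdlemma} at a descent internal to one block across all four terms of $L$, iteration over descent trees to reduce to a sum over leaf triples, Proposition \ref{nodescent} for the base case, and Lemma \ref{leaves} (specialized at $x=1$, $x=y=1$, $y=1$) to identify the three factored sums with $f(\underline{r}_1)$, $\tau(\underline{r}_2)$, $g(\underline{r}_3)$. You in fact spell out the final factorization and identification step more explicitly than the paper does.
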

\begin{proof} 
We use the reduction process.  Suppose $\underline{r}_1 = \underline{r}_{11} 10 \underline{r}_{12}.$  Then by deletion-contraction,   
\[
T(F(\underline{r}_1 01 \underline{r}_2 01 \underline{r}_3))  = T(F(\underline{r}_{11} 0 \underline{r}_{12}    01 \underline{r}_2 01 \underline{r}_3)) + 
T(F(\underline{r}_{11} 1 \underline{r}_{12}    01 \underline{u}_2 01 \underline{r}_3)).   
\]
Similar equations hold for the other three Tutte polynomials in the linear combination $L(\underline{r}_1 \| \underline{r}_2 \| \underline{r}_3)$ and combining these equations, we obtain 
\[
{L}(\underline{r}_1 \| \underline{r}_2 \| \underline{r}_3) ={L}(\underline{r}_{11} 0 \underline{r}_{12} \| \underline{r}_2 \| \underline{r}_3) + {L}(\underline{r}_{11} 1 \underline{r}_{12} \| \underline{r}_2 \| \underline{r}_3).
\]  
Iterating the reduction on $\underline{r}_1,$  and repeating the entire process on $\underline{r}_2$ and $\underline{r}_3,$ we obtain 
\[
{L}(\underline{r}_1 \| \underline{r}_2 \| \underline{r}_3) = \sum_{(\ell_1, \ell_2, \ell_3)}  {L}(\ell_1 \| \ell_2  \| \ell_3), 
\]
the sum ranging over all triples $(\ell_1,\ell_2,\ell_3),$ with $\ell_i$ a leaf in a descent tree of $\underline{r}_i.$  To finish, we apply Lemmas \ref{nodescent} and \ref{leaves}.  
\end{proof}

The next lemma generalizes Lemma \ref{TPPaving} and can be proved by the method in the proof of Theorem \ref{Linterval}.  

\begin{thm} \label{TPPaving1}
Let $a, b \geq 1.$   Then 
\[
T(F(\underline{r}_1  1^a0^b  \underline{r}_3))-
T(F(\underline{r}_1  1^{a-1} 01 0^{b-1} \underline{r}_3)) = f(\underline{r}_1) g(\underline{r}_3) (x + y -xy).
\]
\end{thm}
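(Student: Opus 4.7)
The plan is to mimic the reduction strategy used in Theorem~\ref{Linterval}. Define
\[
D(\underline{r}_1 \| \underline{r}_3) := T(F(\underline{r}_1 1^a 0^b \underline{r}_3)) - T(F(\underline{r}_1 1^{a-1} 01 0^{b-1} \underline{r}_3)).
\]
The goal is to show $D(\underline{r}_1 \| \underline{r}_3) = f(\underline{r}_1) g(\underline{r}_3) (x+y-xy)$ by reducing $\underline{r}_1$ and $\underline{r}_3$ to bit sequences without descents (leaves of descent trees), computing $D$ directly in that base case, and then summing.

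For the reduction, suppose $\underline{r}_1$ has a descent, say $\underline{r}_1 = \underline{r}_{11} 10 \underline{r}_{12}$. Applying Lemma~\ref{cdlemma} to both Tutte polynomials on the right-hand side of the definition of $D$ (at the same position in $\underline{r}_1$, which lies in neither a loop nor an isthmus block) and subtracting yields
\[
D(\underline{r}_1 \| \underline{r}_3) = D(\underline{r}_{11} 0 \underline{r}_{12} \| \underline{r}_3) + D(\underline{r}_{11} 1 \underline{r}_{12} \| \underline{r}_3).
\]
The same identity holds with descents of $\underline{r}_3$ on the right. Iterating, following the branchings of a descent tree of $\underline{r}_1$ and a descent tree of $\underline{r}_3$, we obtain
\[
D(\underline{r}_1 \| \underline{r}_3) = \sum_{(\ell_1,\ell_3)} D(\ell_1 \| \ell_3),
\]
where $\ell_1,\ell_3$ range over leaves of descent trees of $\underline{r}_1$ and $\underline{r}_3$, respectively.

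For the base case, each leaf has the form $0^c 1^d$ (no descents). For $\ell_1 = 0^{c_1}1^{d_1}$ and $\ell_3 = 0^{c_3}1^{d_3}$, the freedom matroids $F(0^{c_1}1^{d_1+a}0^{b+c_3}1^{d_3})$ and $F(0^{c_1}1^{d_1+a-1} 01 0^{b-1+c_3}1^{d_3})$ each decompose as a direct sum of $c_1$ loops, a rank-$(d_1+a)$ matroid on $d_1+a+b+c_3$ elements, and $d_3$ isthmuses. Thus their Tutte polynomials factor as $y^{c_1}(\cdot) x^{d_3}$, and Lemma~\ref{TPPaving} (applied with $r = d_1 + a$ and $n = d_1 + a + b + c_3$) gives
\[
D(\ell_1 \| \ell_3) = y^{c_1} x^{d_3}(x+y-xy).
\]

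It remains to sum these contributions. By Lemma~\ref{leaves}, under the specialization $0^c 1^d \mapsto x^d y^c$ the sum over the multiset of leaves of a descent tree of $\underline{u}$ equals $T(F(\underline{u});x,y)$. Specializing further to $x=1$ (for the $\underline{r}_1$-factor) and to $y=1$ (for the $\underline{r}_3$-factor) gives
\[
\sum_{\ell_1} y^{c_1} = T(F(\underline{r}_1);1,y) = f(\underline{r}_1), \qquad \sum_{\ell_3} x^{d_3} = T(F(\underline{r}_3);x,1) = g(\underline{r}_3).
\]
Multiplying yields $D(\underline{r}_1 \| \underline{r}_3) = f(\underline{r}_1) g(\underline{r}_3)(x+y-xy)$, as required. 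The only thing to verify with any care is that the deletion-contraction step is legitimate (the descent position is not a loop or isthmus of the enlarged freedom matroid), which follows directly from Lemma~\ref{cdlemma}; this is the main, and essentially only, potential pitfall.
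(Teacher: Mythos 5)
Your proof is correct and is precisely the argument the paper intends: the paper gives no written proof, remarking only that the theorem ``can be proved by the method in the proof of Theorem \ref{Linterval},'' and your descent-tree reduction of $\underline{r}_1$ and $\underline{r}_3$ via Lemma \ref{cdlemma}, the base case $D(0^{c_1}1^{d_1}\|0^{c_3}1^{d_3})=y^{c_1}x^{d_3}(x+y-xy)$ obtained from Lemma \ref{TPPaving}, and the identification of the leaf sums with $f(\underline{r}_1)$ and $g(\underline{r}_3)$ via Lemma \ref{leaves} is exactly that method.
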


Theorems \ref{Linterval} and \ref{TPPaving1} give the building blocks for making linear relations on 
Tutte polynomials of freedom matroids.   The {\sl smallest} linear relation, holding in $\mathcal{T}(4,2),$ is 
\begin{eqnarray*}
 T(F(1100)) - T(F(1010)) &=& x + y -xy 
\\
& = &   T(F(1010)) -  T(F(1001)) - T(F(0110)) +T(F(0101)).  
\end{eqnarray*} 
A more complicated example is obtained by considering the two overlapping height-$2$ intervals in Figure \ref{fig:twodiamonds}.    The lower height-$2$ interval gives 
\[
L( \underline{r}_1 \| \underline{r}_2 0 \| \underline{r}_3) = f(\underline{r}_1)\tau(\underline{r}_2 0) g(\underline{r}_3)(x + y -xy) 
\]     
while the upper height-$2$ interval gives 
\[
L( \underline{r}_1 \| \underline{r}_2  \| 0\underline{r}_3) = f(\underline{r}_1)\tau(\underline{r}_2)  
g(0\underline{r}_3)(x + y -xy). 
\]     
Noting that $g(0\underline{r}_3) = g(\underline{r}_3),$ this yields 
\[
\tau   L( \underline{r}_1 \| \underline{r}_2 0 \| \underline{r}_3)  
= \tau^{\prime}   L( \underline{r}_1 \| \underline{r}_2  \| 0\underline{r}_3), 
\]
where $\tau = \tau(\underline{r}_2)$ and $\tau^{\prime} =  \tau(\underline{r}_2 0).$  Explicitly, 
\begin{eqnarray*} 
&& \tau T(F(\underline{r}_1 01 \underline{r}_2 001 \underline{r}_3))   
- (\tau + \tau^{\prime}) T(F(\underline{r}_1 01 \underline{r}_2  010 \underline{r}_3))  
+ \tau^{\prime}  T(F(\underline{r}_1 01 \underline{r}_2  100 \underline{r}_3))  
\\
&&  \quad -  \tau T(F(\underline{r}_1 10 \underline{r}_2 001 \underline{r}_3))   
+ (\tau + \tau^{\prime})  T(F(\underline{r}_1 10 \underline{r}_2  010 \underline{r}_3))  
- \tau^{\prime} T(F(\underline{r}_1 10 \underline{r}_2  100 \underline{r}_3))  
= 0.
\end{eqnarray*}

\showhide{
\begin{figure}
  \centering
  \begin{tikzpicture}[scale=1]
%  \node[inner sep = 0.3mm] (l) at (4.0,1.9) {\small $11000$};
%    \node[inner sep = 0.3mm] (b) at (5.3,1.3) {\small $10100$};  
%\foreach \from/\to in {b/l} \draw(\from)--(\to);  
%
\node[inner sep = 0.3mm] (b) at (7.75,-0.5) {\small $\underline{r}_1 01 \underline{r}_2 001 \underline{r}_3$};
  \node[inner sep = 0.3mm] (l) at (6.5,0.3) {\small $\underline{r}_1 10 \underline{r}_2 001 \underline{r}_3$};
  \node[inner sep = 0.3mm] (r) at (9.5,0.3) {\small $\underline{r}_1 01 \underline{r}_2 010 \underline{r}_3$}; 
  \node[inner sep = 0.3mm] (r1) at (11.1,1.0) {\small $\underline{r}_1 01 \underline{r}_2 100 \underline{r}_3$};
  \node[inner sep = 0.3mm] (t) at (8.0,1.0) {\small $\underline{r}_1 10 \underline{r}_2  010 \underline{r}_3$}; 
  \node[inner sep = 0.3mm] (t1) at (9.7,1.8) {\small $\underline{r}_1 10 \underline{r}_2  100 \underline{r}_3$};
  \foreach \from/\to in {b/l,b/r,t/l,t/r,r/r1,t/t1,r1/t1} \draw(\from)--(\to);
  \end{tikzpicture}
  \caption{}
  \label{fig:twodiamonds}
\end{figure}
}
%Another example (chosen at random) is  
%\[
%L(10 \| 10 \| o) = 2y(x + y -xy) = 2L(10 \| 01 \| o).  
%\]

A generating set for all linear relations is the set $\mathsf{L}(n,r)$ consisting of the linear relations 
\begin{eqnarray*}  
&&  \tau(\underline{r}_2) [T(F(\underline{r}_1 1^{a+2} 0^{b+2} \underline{r}_3)) -
T(F(\underline{r}_1 1^{a+1} 010^{b+1} \underline{r}_3))]  = 
\\
%&&
%L(\underline{r}_1 \| \underline{r}_2  \| \underline{r}_3)) = 
%\\
&&  
\quad 
T(F(\underline{r}_1 10 \underline{r}_2 10 \underline{r}_3))
- T(F(\underline{r}_1 10 \underline{r}_2 01\underline{r}_3))
- T(F(\underline{r}_1 01 \underline{r}_2 10 \underline{r}_3))
 + T(F(\underline{r}_1 01 \underline{r}_2  01  \underline{r}_3)),
\end{eqnarray*}
where $\underline{r}_1,\underline{r}_2,\underline{r}_3$ are bit sequences such that 
$\underline{r}_1 01 \underline{r}_2 01 \underline{r}_2$ is an $(n,r)$-sequence,  $a$ is the number of $1$'s in $\underline{r}_2,$  and $b$ is the number of $0$'s in $\underline{r}_2.$   
The linear relations in $\mathsf{L}(n,r)$ are indexed by two intervals in $(\mathcal{S}(n,r),\trianglerighteq)$: the height-$2$ interval 
$[ \underline{r}_1 01 \underline{r}_2 01 \underline{r}_3, \underline{r}_1 10 \underline{r}_2 10\underline{r}_3]$ 
and lying above it, the height-$1$ interval $[\underline{r}_1 1^{a+1} 01 0^{b+1} \underline{r}_3, 
\underline{r}_1 1^{a+2} 0^{b+2} \underline{r}_3 ].$   The two intervals are disjoint or intersect at one sequence. 
As an example, the pair of intervals indexing the relation 
\[
T(F(11000)) - T(F(10100)) = T(F(10010)) - T(F(10001)) - T(F(01010)) + T(F(01001) )   
\] 
is shown in Figure \ref{fig:linrelation}.      

\begin{thm} \label{syzygies} 
The set $\mathsf{L}(n,r)$  is a generating or spanning set for linear relations or syzygies on Tutte polynomials of rank-$r$ size-$n$ freedom matroids.     
\end{thm}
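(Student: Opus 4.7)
The plan is to first verify that each element of $\mathsf{L}(n,r)$ is a genuine linear relation on Tutte polynomials of freedom matroids, and then to prove the spanning claim by an order-theoretic reduction argument that uses the meet-irreducible basis of Theorem \ref{main2} as the stopping point of an induction.

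To verify the identity in $\mathsf{L}(n,r)$ indexed by a triple $(\underline{r}_1,\underline{r}_2,\underline{r}_3)$, I would apply Theorem \ref{TPPaving1} with exponents $a+2$ and $b+2$ (both at least $1$) to obtain
\[
T(F(\underline{r}_1 1^{a+2}0^{b+2}\underline{r}_3)) - T(F(\underline{r}_1 1^{a+1}010^{b+1}\underline{r}_3)) = f(\underline{r}_1)g(\underline{r}_3)(x+y-xy).
\]
Multiplying by $\tau(\underline{r}_2)$ and invoking Theorem \ref{Linterval}, which gives $L(\underline{r}_1\|\underline{r}_2\|\underline{r}_3) = f(\underline{r}_1)\tau(\underline{r}_2)g(\underline{r}_3)(x+y-xy)$, the two sides of the identity match, confirming that each element of $\mathsf{L}(n,r)$ is a valid syzygy.

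For the spanning claim, I fix a linear extension $\prec$ of the partial order $\trianglerighteq$ on $\mathcal{S}(n,r)$ (so $\underline{s}' \triangleright \underline{s}$ forces $\underline{s} \prec \underline{s}'$) and consider an arbitrary vanishing linear combination $\sum_{\underline{s}} c_{\underline{s}} T(F(\underline{s})) = 0$. If no meet-reducible occurs in its support, the combination is supported on a linearly independent set by Theorem \ref{main2}, forcing it to be trivial. Otherwise, let $\underline{s}_0$ be the $\prec$-least meet-reducible in the support; since $\underline{s}_0$ has at least two ascents, it admits a decomposition $\underline{s}_0 = \underline{r}_1 01 \underline{r}_2 01 \underline{r}_3$. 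The element of $\mathsf{L}(n,r)$ indexed by this triple expresses $T(F(\underline{s}_0))$ as a $\mathbb{K}$-linear combination of Tutte polynomials of the five other sequences in the identity, each of which lies strictly above $\underline{s}_0$ in $\trianglerighteq$. Using this expression to eliminate $T(F(\underline{s}_0))$ from the relation produces an equivalent relation in which $\underline{s}_0$ is absent from the support and every newly introduced sequence is $\succ \underline{s}_0$; the $\prec$-least meet-reducible of the new relation is therefore strictly greater than $\underline{s}_0$.

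Iterating, this reduction strictly advances the $\prec$-least meet-reducible in the support and terminates in finitely many steps at a relation supported entirely on meet-irreducibles, which by Theorem \ref{main2} must be zero. Unwinding the reductions exhibits the original relation as a $\mathbb{K}$-linear combination of elements of $\mathsf{L}(n,r)$. The main obstacle, and the only nontrivial point, is checking that all five target sequences lie strictly above $\underline{s}_0$: the three arising from the height-$2$ interval above $\underline{s}_0$ are immediate from the cover relations, while for the remaining two one must confirm that both $\underline{r}_1 1^{a+1}010^{b+1}\underline{r}_3$ and $\underline{r}_1 1^{a+2}0^{b+2}\underline{r}_3$ dominate $\underline{r}_1 01 \underline{r}_2 01 \underline{r}_3$ in the prefix-sum comparison defining $\trianglerighteq$, which follows because front-loading the $1$'s in the middle block never decreases any prefix weight.
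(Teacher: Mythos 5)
Your proposal is correct and follows essentially the same route as the paper: verify each relation in $\mathsf{L}(n,r)$ by combining Theorems \ref{Linterval} and \ref{TPPaving1}, then run a straightening argument that repeatedly eliminates a meet-reducible sequence from the support using one of these relations until only the meet-irreducible basis of Theorem \ref{main2} remains. You make explicit the termination argument and the check that every replacement sequence lies strictly above the eliminated one in $\trianglerighteq$, details the paper leaves implicit, but the underlying argument is the same.
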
 
\begin{proof}
Propositions \ref{Linterval} and \ref{TPPaving1} imply that the linear relations in $\mathsf{L}(n,r)$ hold.  To show that $\mathsf{L}(n,r)$ is a generating set, note that if $\underline{s}$  is meet-reducible, then it is the minimum of a height-$2$ interval.  Using one of the linear relations in $\mathsf{L}(n,r),$ we can write 
$T(F(\underline{s}))$ as a linear combination with integer coefficients of Tutte polynomials $T(F(\underline{r })), \, \underline{r} \,\triangleright\, \underline{s}.$  Repeating this, we can write $T(F(\underline{r}))$ as an integral linear combination with integer coefficients   
of elements in the meet-irreducible basis.  We conclude that $\mathsf{L}(n,r)$ is a generating set.  
\end{proof} 

From the proof of Theorem \ref{syzygies}, we obtain the following proposition.  

\begin{prop}\label{integral}  
The Tutte polynomial of a freedom matroid $F(\underline{s}), \, \underline{s} \in \mathcal{S}(n,r),$ is a linear combination with integer coefficients of elements $T(F(\underline{m}))$ in the meet-irreducible basis such that $\underline{m} \trianglerighteq \underline{s}.$       
\end{prop}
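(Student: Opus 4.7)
The plan is to carry out downward induction on $\underline{s}$ with respect to the order $\trianglerighteq$, essentially formalizing the final paragraph of the proof of Theorem~\ref{syzygies} while tracking the inequalities carefully. The base case is when $\underline{s} \in \mathsf{M}$ is meet-irreducible, in which case $T(F(\underline{s}))$ is itself a basis element and the claim holds trivially with integer coefficient $1$.

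For the inductive step, if $\underline{s}$ is meet-reducible it has at least two ascents, so I can factor $\underline{s} = \underline{r}_1\, 01\, \underline{r}_2\, 01\, \underline{r}_3$ for appropriate bit sequences $\underline{r}_1, \underline{r}_2, \underline{r}_3$ (with $\underline{r}_2$ of weight $a$ and length $a+b$). Rearranging the corresponding linear relation in $\mathsf{L}(n,r)$ yields
\begin{align*}
T(F(\underline{s})) &= \tau(\underline{r}_2)\, T(F(\underline{r}_1\, 1^{a+2} 0^{b+2}\, \underline{r}_3)) - \tau(\underline{r}_2)\, T(F(\underline{r}_1\, 1^{a+1} 01\, 0^{b+1}\, \underline{r}_3)) \\
&\quad - T(F(\underline{r}_1\, 10\, \underline{r}_2\, 10\, \underline{r}_3)) + T(F(\underline{r}_1\, 10\, \underline{r}_2\, 01\, \underline{r}_3)) + T(F(\underline{r}_1\, 01\, \underline{r}_2\, 10\, \underline{r}_3)).
\end{align*}
Since $\tau(\underline{r}_2) = T(F(\underline{r}_2);1,1)$ is a non-negative integer (namely the number of bases of the freedom matroid $F(\underline{r}_2)$), every coefficient on the right lies in $\mathbb{Z}$. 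Applying the induction hypothesis to each of the five Tutte polynomials on the right and invoking transitivity of $\trianglerighteq$ produces the desired integral expansion of $T(F(\underline{s}))$ in the meet-irreducible basis $\{T(F(\underline{m})) : \underline{m} \in \mathsf{M}\}$, with all $\underline{m} \trianglerighteq \underline{s}$.

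The main point needing verification is that every one of the five defining sequences on the right-hand side strictly dominates $\underline{s}$ in $(\mathcal{S}(n,r), \trianglerighteq)$. Three of them, $\underline{r}_1\, 10\, \underline{r}_2\, 10\, \underline{r}_3$ and the two ``atoms'' $\underline{r}_1\, 10\, \underline{r}_2\, 01\, \underline{r}_3$ and $\underline{r}_1\, 01\, \underline{r}_2\, 10\, \underline{r}_3$, lie strictly above $\underline{s}$ by construction of the height-$2$ interval. For the remaining two sequences from the accompanying height-$1$ interval, sliding $1$'s leftward within the middle block $01\, \underline{r}_2\, 01$ only weakly increases every partial sum, so both $\underline{r}_1\, 1^{a+2} 0^{b+2}\, \underline{r}_3$ and $\underline{r}_1\, 1^{a+1} 01\, 0^{b+1}\, \underline{r}_3$ are $\triangleright \underline{s}$. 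Once these inequalities are in hand the induction runs without further obstruction, and integrality is automatic because the only nontrivial coefficients are $\pm \tau(\underline{r}_2) \in \mathbb{Z}$.
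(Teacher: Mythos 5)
Your proof is correct and is essentially the paper's own argument: the paper derives Proposition~\ref{integral} directly from the final paragraph of the proof of Theorem~\ref{syzygies}, which performs exactly this iterated rewriting of $T(F(\underline{s}))$ via a relation in $\mathsf{L}(n,r)$ whenever $\underline{s}$ is meet-reducible. The only difference is that you spell out the detail the paper leaves implicit, namely that the two sequences $\underline{r}_1 1^{a+2}0^{b+2}\underline{r}_3$ and $\underline{r}_1 1^{a+1}01\,0^{b+1}\underline{r}_3$ coming from the height-$1$ interval also satisfy $\triangleright\,\underline{s}$, which is a worthwhile check.
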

%
%Proposition \ref{integral} is false for the join-irreducible basis.  The smallest linear relation shows that the basis built from the join-irreducibles is not an integral basis for Tutte polynomials of %freedom matroids in $\mathcal{T}(4,2)$.   

%The next proposition  
% and says that the basis $\{T(F(\underline{m})), \underline{m}\in\mathsf{M}\}$ is a ``standard''  basis with respect to the $\trianglerighteq$-order. 
%follows from Corollary \ref{TPsubspace} and Proposition \ref{integral}.      
%For an $(n,r)$-matroid $M,$ let $\mathrm{Upper}(M)$ be the subset $\{ \underline{r}:  g_{\underline{r}}(M) \neq 0  \}.$
% in $(\mathcal{S}(n,r),\trianglerighteq).$   
%It is not hard to see that $\mathrm{Upper}(M)$ is an order filter.  
%
The next proposition follows from Propositions \ref{upper} and \ref{integral}.  

\begin{prop}\label{straightening} 
Let $M$ be an $(n,r)$-matroid.   
%%$\mathrm{sym}(M)$ be the set of $(n,r)$-sequences $\underline{r}$ such that $g_{\underline{r}}(M) \neq 0,$ 
Then the Tutte polynomial $T(M)$ is a linear combination of Tutte polynomials 
$T(F(\underline{m}))$ where $\underline{m} \in \mathsf{M} \cap \mathrm{supp}(M).$  
\end{prop}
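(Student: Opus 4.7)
The plan is to chain together the two cited propositions and exploit that the support $\mathrm{supp}(M)$ is an order filter.

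First I would expand $\mathcal{G}(M)$ in the freedom-matroid basis guaranteed by Theorem \ref{freedombasis}, obtaining
\[
\mathcal{G}(M) = \sum_{\underline{r} \in \mathcal{S}(n,r)} c_{\underline{r}} \, \mathcal{G}(F(\underline{r})).
\]
By Proposition \ref{upper}, the only terms with $c_{\underline{r}} \neq 0$ are those with $\underline{r} \in \mathrm{supp}(M)$. Applying the specialization $\mathsf{Sp}$ and using $\mathsf{Sp}(\mathcal{G}(N)) = T(N)$ gives
\[
T(M) = \sum_{\underline{r} \in \mathrm{supp}(M)} c_{\underline{r}} \, T(F(\underline{r})).
\]

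Next I would invoke Proposition \ref{integral} on each summand: for $\underline{r} \in \mathrm{supp}(M)$,
\[
T(F(\underline{r})) = \sum_{\underline{m} \in \mathsf{M},\, \underline{m} \trianglerighteq \underline{r}} d_{\underline{m},\underline{r}} \, T(F(\underline{m}))
\]
with integer coefficients $d_{\underline{m},\underline{r}}$. The crucial observation is that by Lemma \ref{support} the support $\mathrm{supp}(M)$ is an order filter in $(\mathcal{S}(n,r),\trianglerighteq)$; therefore $\underline{m} \trianglerighteq \underline{r}$ and $\underline{r} \in \mathrm{supp}(M)$ force $\underline{m} \in \mathrm{supp}(M)$. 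Consequently every meet-irreducible $\underline{m}$ appearing in the combined expansion lies in $\mathsf{M} \cap \mathrm{supp}(M)$, which is exactly what the proposition claims.

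There is no real obstacle here beyond bookkeeping: the two preceding propositions have already done the heavy lifting (one controls which $\underline{r}$'s appear when writing $\mathcal{G}(M)$ in the freedom basis, the other controls which meet-irreducibles $\underline{m}$ appear when straightening a single $T(F(\underline{r}))$). The only thing the proof must add is the upward-closure observation from Lemma \ref{support} that lets the two filters line up. I would therefore write this as a short three-line proof that substitutes the expansion of each $T(F(\underline{r}))$ into the expansion of $T(M)$ and cites Lemma \ref{support} to keep the meet-irreducibles inside $\mathrm{supp}(M)$.
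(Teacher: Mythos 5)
Your proof is correct and follows exactly the route the paper intends: the paper justifies this proposition in one line as a consequence of Propositions \ref{upper} and \ref{integral}, and your argument is precisely that chain, with the additional (necessary and correctly supplied) observation that Lemma \ref{support} makes $\mathrm{supp}(M)$ upward-closed so that the meet-irreducibles $\underline{m}\trianglerighteq\underline{r}$ produced by Proposition \ref{integral} remain in the support. No gaps; this is the same argument, just written out in full.
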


\showhide{
\begin{figure}
  \centering
  \begin{tikzpicture}[scale=1]
  \node[inner sep = 0.3mm] (l) at (4.0,1.9) {\small $11000$};
  \node[inner sep = 0.3mm] (b) at (5.3,1.3) {\small $10100$};  
\foreach \from/\to in {b/l} \draw(\from)--(\to);  
\node[inner sep = 0.3mm] (b) at (7.75,-0.5) {\small $01001$};
  \node[inner sep = 0.3mm] (l) at (7,0.3) {\small $10001$};
  \node[inner sep = 0.3mm] (r) at (8.5,0.3) {\small $01010$};
  \node[inner sep = 0.3mm] (t) at (7.75,1.0) {\small $10010$};
  \foreach \from/\to in {b/l,b/r,t/l,t/r} \draw(\from)--(\to);
  \end{tikzpicture}
%  \caption{The two intervals behind the linear relation \[T(F(11000)) - T(F(10100)) - L(o\|0\|o) = 0. \]} 
\caption{} 
 \label{fig:linrelation}
\end{figure}
}

%Let $M^{\perp}$ be the (orthogonal) dual of $M.$  Then $T(M^{\perp};x,y) = T(M;y,x)$ and  
%\[
%F(r_1r_2 \ldots r_n)^{\perp}  = F(\bar{r}_n \bar{r}_{n-1} \ldots \bar{r}_1), 
%\]
%where $\bar{r} \equiv r+1 \,\mathrm{mod}\,2.$   The function 
%$\mathsf{rev}$ from $\mathcal{S}(n,r)$ to $\mathcal{S}(n,n-r)$ defined by $\, r_1r_2 \ldots r_n\mapsto \bar{r}_n %\bar{r}_{n-1} \ldots \bar{r}_1$ preserves the order $\trianglerighteq  \!.$  Hence most of the formulas in this paper hold %under duality, in the sense that if one applies the function $\mathsf{rev}$ and switches the variables
% $x \leftrightarrow y.$    

The {\sl girth} (or {\sl spark}) of a matroid $M$ is the minimum size of a circuit in $M.$  
Let $\mathcal{T}_k(n,r)$ be the subspace of $\mathcal{T}(n,r)$ spanned by the Tutte polynomials of $(n,r)$-matroids of girth at least $k.$  

\begin{cor}\label{girth}   
The subspace $\mathcal{T}_k(n,r)$ has as basis the freedom matroids $T(F(\underline{m})), \, \underline{m} \in \mathsf{M}$ and $\underline{m}$ starts with $k-1$ $1$'s. 
In particular, 
\[ 
\dim \mathcal{T}_k(n,r) = (n-r)(r - k + 1)+1.    
\]
\end{cor}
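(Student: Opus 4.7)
The plan is to identify which meet-irreducibles $\underline{m}$ give freedom matroids $F(\underline{m})$ of girth at least $k$, observe that this condition matches ``$\underline{m}$ starts with $k-1$ ones,'' and then combine with Proposition \ref{straightening} and the meet-irreducible basis (Theorem \ref{main2}) to conclude.

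The first step is to compute the girth of each meet-irreducible freedom matroid. A meet-irreducible is either the top element $1^r 0^{n-r}$ (whose freedom matroid is $U_{r,n}$, of girth $r+1$) or a sequence $1^a 0^b 1^c 0^d$ with $b,c \geq 1$. In the second case, the distinguished flat $X_a$ has rank $a$ and size $a+b \geq a+1$, so the subset $\{1,2,\ldots,a+1\} \subseteq X_a$ is dependent; using the independence characterization $|A \cap X_i| \leq i$ from Lemma \ref{basis}(a), this subset is seen to be a circuit, and no smaller circuit can exist because $|X_i| = i$ for $i < a$. Hence the girth is exactly $a+1$, and girth $\geq k$ is equivalent to $\underline{m}$ starting with $k-1$ ones (the case $1^r 0^{n-r}$ being absorbed into this statement, with $a = r$).

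The second step is to show that for any $(n,r)$-matroid $M$ of girth at least $k$, every meet-irreducible in $\mathrm{supp}(M)$ starts with $k-1$ ones. Writing such an $\underline{m} = 1^a 0^b 1^c 0^d$, membership in $\mathrm{supp}(M)$ gives a permutation $\pi$ whose rank sequence is $\underline{m}$; when $b \geq 1$, position $a+1$ of the sequence is a $0$, so $\pi(a+1) \in \mathrm{cl}\{\pi(1),\ldots,\pi(a)\}$, and $\{\pi(1),\ldots,\pi(a+1)\}$ contains a circuit of size at most $a+1$. The girth hypothesis then forces $a \geq k-1$, and the boundary case $b=0$ gives $a = r \geq k-1$ trivially.

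Proposition \ref{straightening} now implies that $\mathcal{T}_k(n,r)$ is spanned by the Tutte polynomials $T(F(\underline{m}))$ with $\underline{m} \in \mathsf{M}$ starting with $k-1$ ones; these are linearly independent as a subset of the meet-irreducible basis (Theorem \ref{main2}) and each lies in $\mathcal{T}_k(n,r)$ by the first step, so they form the desired basis. The dimension count is then combinatorial: the top element $1^r 0^{n-r}$ contributes $1$, and the remaining admissible meet-irreducibles are parametrized by $(a,b)$ with $k-1 \leq a \leq r-1$ and $1 \leq b \leq n-r$, contributing $(r-k+1)(n-r)$ more, which yields the claimed dimension. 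I expect the subtlest point to be the careful handling of the girth calculation at degenerate parameter values (such as $a=0$ or $d=0$), where the nested structure collapses.
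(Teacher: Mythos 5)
Your proposal is correct and follows essentially the same route as the paper: identify the meet-irreducibles starting with $k-1$ ones as exactly those whose freedom matroids have girth at least $k$ and which can appear in the support of a girth-$\geq k$ matroid, then invoke Proposition \ref{straightening} for spanning and Theorem \ref{main2} for independence. The only cosmetic difference is that the paper obtains the count by an order-isomorphism of the upper interval $[1^{k-1}0^{n-r}1^{r-k+1},1^r0^{n-r}]$ with $(\mathcal{S}(n-k+1,r-k+1),\trianglerighteq)$, whereas you enumerate the admissible pairs $(a,b)$ directly; both are fine.
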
 
\begin{proof} 
Let $U$ be the interval $[1^{k-1} 0^{n-r} 1^{r-k+1}, 1^r 0^{n-r}]$ in $(\mathcal{S}(n,r), \trianglerighteq).$
It is immediate that the following are equivalent for an $(n,r)$-matroid $M$: 
\begin{enumerate} 
\item  $M$ has girth at least $k;$ 
\item  for all $(n,r)$-sequences $\underline{r},$  $g_{\underline{r}}(M) \neq 0$ implies that $\underline{r}$ starts with $k-1$ $1$'s;   
\item  $\mathrm{supp}(M) \subseteq U.$  
\end{enumerate}

The map that removes the initial segment $1^{k-1}$ from each bit sequence in $U$ is an order-isomorphism sending $U$ onto $(\mathcal{S}(n-k+1,r-k+1),\trianglerighteq).$  From the fact that a bit sequence is a meet-irreducible if and only if it equals $1^a0^b1^c0^d,$ 
it follows that the meet-irreducibles in $U$ map bijectively onto the meet-irreducibles in $\mathcal{S}(n-k+1,r-k+1)$ and hence, there are $(n-r)(r-k+1)+1$ such meet-irreducibles.    Since the meet-irreducibles in the {\it upper} interval $U$
are meet-irreducibles in $(\mathcal{S}(n,r),\trianglerighteq),$ it follows from Corollary \ref{straightening} that the Tutte polynomials $T(F(\underline{m})),$  where $\underline{m}$ is a meet-irreducible in $U,$ form an independent set spanning $\mathcal{T}_k(n,r).$   
\end{proof}

The third assertion in the next result uses the easy fact that a rank-$r$ matroid is paving if and only if it has girth 
$r$ or $r+1.$  

\begin{cor} 
The Tutte polynomials of loopless $(n,r)$-matroids span a subspace of dimension $(n-r)(r-1)+1,$  the Tutte polynomials of simple $(n,r)$-matroids span a subspace of dimension $(n-r)(r-2)+1,$ and 
the  Tutte polynomials of paving $(n,r)$-matroids span a subspace of dimension $n-r+1.$ 
\end{cor}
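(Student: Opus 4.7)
The plan is to reduce each of the three statements to a single application of Corollary \ref{girth}, by translating each matroid property into a numerical lower bound on the girth and then invoking the dimension formula $\dim \mathcal{T}_k(n,r) = (n-r)(r-k+1)+1$.

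First I would dispose of the loopless case. An $(n,r)$-matroid is loopless precisely when it has no circuit of size $1,$ which is exactly the condition that its girth be at least $2.$ Hence the span of the Tutte polynomials of loopless $(n,r)$-matroids is, by definition, $\mathcal{T}_2(n,r),$ and substituting $k=2$ into Corollary \ref{girth} yields the dimension $(n-r)(r-1)+1.$

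Next I would handle the simple case in the same fashion: a matroid is simple iff it has no loops and no parallel pairs, which is the same as having no circuits of size $1$ or $2,$ i.e.\ girth at least $3.$ So the span in question equals $\mathcal{T}_3(n,r),$ and Corollary \ref{girth} with $k=3$ gives dimension $(n-r)(r-2)+1.$

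Finally I would deal with paving matroids. Here I would appeal to the easy fact cited just before the corollary: a rank-$r$ matroid is paving iff every circuit has size $r$ or $r+1,$ equivalently iff its girth is at least $r.$ Thus the span of the Tutte polynomials of paving $(n,r)$-matroids is $\mathcal{T}_r(n,r),$ and Corollary \ref{girth} with $k=r$ yields dimension $(n-r)(r-r+1)+1 = n-r+1.$ There is no genuine obstacle in this proof — all the real work was done in Corollary \ref{girth}; here one only needs to recognize that each of the three classical matroid properties is encoded as a girth inequality, so that the relevant span is literally one of the subspaces $\mathcal{T}_k(n,r)$ already treated.
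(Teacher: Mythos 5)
Your proof is correct and is essentially the paper's intended argument: the paper derives all three statements directly from Corollary \ref{girth} by identifying loopless, simple, and paving with girth at least $2$, $3$, and $r$ respectively (the paving equivalence being the "easy fact" the paper cites just before the corollary).
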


%Results related to Corollary \ref{girth} can be found in \cite[Section 6]{Brylawski}.

\section{Linear relations on coefficients of Tutte polynomials} \label{coefficients}

There are $(r+1)(n-r+1)$ possible non-zero coefficients $t_{ij},  \, 0 \leq i \leq r, 0 \leq j \leq n-r,$ in the Tutte polynomial of an $(n,r)$-matroid.  Since $\dim \mathcal{T}(n,r) = r(n-r)+1,$  there are $n$ linearly independent linear relations on the coefficients.  A set of such relations was found by Brylawski \cite[Section 6]{Brylawski}.   
For $m \geq 0,$ let 
\[
J_m = \sum_{\alpha=0}^{m} \left(\sum_{\beta=0}^{\alpha}  (-1)^{\beta}\binom {\alpha}{\beta} t_{m-\alpha,\beta}\right).  
\]
One might visualize the linear combinations $J_m$ as   ``tableaux with staircase shape''.  For example,    
\[
J_0 = t_{00}, \quad J_1 = \begin{array}{cc} \,\,\, t_{00} & -t_{01} \\ +\, t_{10} & \, \end{array}  \!,
\quad J_2 = \begin{array}{ccc} \,\,\,  t_{00} & -2t_{01} & + t_{02} \\  +\, t_{10} & -t_{11} & \, 
\\ +\, t_{20} &\, & \,\end{array}
\]
\vskip 0.1in
\[
J_3 = \begin{array}{cccc}
\,\,\,  t_{00}   & -3 t_{01}   & + 3 t_{02} & -  t_{03}  
\\
+\,  t_{10}   & - 2  t_{11}  &+  t_{12}     
\\
+ \,t_{20}  &  -   t_{21} 
\\
+\,  t_{30} 
\end{array}, 
\qquad 
J_4 = \begin{array}{ccccc}
\,\,\,  t_{00}   & -4  t_{01}  &+ 6 t_{02}   & -4  t_{03}  &+  t_{04}
\\
+\,  t_{10}   & -3 t_{11}   & + 3 t_{12} & -  t_{13}  &\, 
\\
+\,  t_{20}   & - 2  t_{21}  &+  t_{22}  &\,  &\,  
\\
+ \,t_{30}  &  -   t_{31} 
\\
+\,  t_{40} 
\end{array}.
\]
The {\sl hook} $H_m (d,a)$ is the linear combination in $J_m$ defined by 
\[
H_m(d,a)=
\sum_{j=1}^{m-d-a}   \binom {m-d-j}{a} t_{j+d,a}  + \binom {m-d}{a}  t_{da} + \sum_{k=1}^{m-d-a}  (-1)^k \binom{m-d}{a+k}  t_{d,a+k}.
\]
%For each integer $\gamma$ such that $\gamma \geq m-d-a,$ we associate with the hook $H_m(d,a)$ the $(2\gamma + %1)$-dimensional vector $\vec{H}_m(d,a)$   
%\[
%\big(0,\ldots,0,1,\binom{a+1}{a},\ldots, \binom{m-d-1}{a}, \binom{m-d}{a}, -\binom {m-d}{a+1}, 
%\binom {m-d}{a+2}, \ldots, (-1)^{m-d-a}, 0, \ldots,0 \big),
%\]
%where there are $\gamma - (m-d-a)$ $0$'s added to the left and right.  
For example, 
\begin{eqnarray*} 
H_4(0,0) &=& t_{40} +  t_{30}  + t_{20} +  t_{10}  + t_{00} -4  t_{01} + 6 t_{02} -4  t_{03}+  t_{04},
\\
H_4(0,1) &=&    t_{31} + 2t_{21} + 3t_{11} +   4t_{01} - 6t_{02} + 4t_{03} - t_{04},   
\\
H_4(2,1) &=&      t_{31}  + 2  t_{21} - t_{22}.
\end{eqnarray*}  
%and for $\gamma = 5,$ $\vec{H}_4(0,1) =(0,0,1,2,3,4,-6,4,-1,0,0).$   
The {\sl centered hook} $\bar{H}_m(d,a)$ is defined by subtracting $(d,a)$ from each pair of subscripts in $H_m(d,a),$ 
 that is,    
\[
\bar{H}_m(d,a)=
\sum_{j=1}^{m-d}   \binom {m-d-j}{a} t_{j0}  + \binom {m-d}{a}  t_{00} + \sum_{k=1}^{m-d}  (-1)^k \binom{m-d}{a+k}  t_{0,k}.
\]

The next theorem is Theorem 6.6 in Brylawski \cite{Brylawski}.  

\begin{thm}\label{sat}
If $n \geq 1$ and $0 \leq m < n,$ then the coefficients of the Tutte polynomial of a matroid on a set of size $n$ satisfy $J_m =0.$
\end{thm}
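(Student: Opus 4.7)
The plan is to package the entire sequence $J_0, J_1, J_2, \ldots$ into a single generating function in a new variable $z$ and evaluate it in closed form by invoking Proposition~\ref{xy}.

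First, I would reindex $(\alpha, \beta) \mapsto (m-\alpha, \beta) = (i, j)$ to rewrite $J_m = \sum_{i + j \leq m}(-1)^j \binom{m-i}{j}\, t_{ij}(M)$. Swapping the order of summation in $\sum_{m \geq 0} J_m z^m$ and applying the standard identity $\sum_{m' \geq 0} \binom{m'}{j} z^{m'} = z^j/(1-z)^{j+1}$ collapses the whole series into a single Tutte-polynomial evaluation:
\[
\sum_{m \geq 0} J_m\, z^m \;=\; \frac{1}{1-z}\, T\!\left(M;\, z,\, \tfrac{z}{z-1}\right).
\]
This is an identity in $\mathbb{Q}[[z]]$, legitimate because $z-1$ is a unit in that ring, so $z/(z-1) = -z - z^2 - z^3 - \cdots$ is a well-defined formal power series.

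Second, a direct check shows that $(\alpha, \beta) = (z, z/(z-1))$ satisfies $\alpha + \beta - \alpha\beta = 0$. By Proposition~\ref{xy} and the explicit formula $T(M;\alpha,\beta) = \alpha^n(\alpha-1)^{r-n}$ recorded immediately after it, the Tutte evaluation simplifies to $z^n(z-1)^{r-n}$, so after absorbing signs via $z-1 = -(1-z)$,
\[
\sum_{m \geq 0} J_m\, z^m \;=\; \frac{(-1)^{n-r}\, z^n}{(1-z)^{\,n-r+1}}.
\]
The right-hand side is manifestly a formal power series whose lowest nonzero monomial lies in degree $n$, so $J_m = 0$ for all $0 \leq m < n$, which is the theorem.

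I do not expect any step to pose a real obstacle: the whole argument simply repackages Proposition~\ref{xy}. The one point worth watching is that the substitution $y = z/(z-1)$ must be interpreted as a formal series identity rather than a pointwise evaluation, since $T(M;\alpha, \alpha/(\alpha-1)) = \alpha^n(\alpha-1)^{r-n}$ is an identity of rational functions in $\alpha$ that is being specialized at $\alpha = z \in \mathbb{Q}[[z]]$. As a free byproduct, extracting coefficients from the closed form also yields the explicit expression $J_m = (-1)^{n-r}\binom{m-r}{n-r}$ for every $m \geq n$, refining Brylawski's vanishing statement.
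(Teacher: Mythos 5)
Your argument is correct, and it takes a genuinely different route from the paper's. The paper proves Theorem \ref{sat} basis-by-basis: by Theorem \ref{mainjoin} it suffices to check $J_m=0$ on the join-irreducible basis $T(F(0^a1^b0^c1^d))=y^aT(U_{b,b+c})x^d$, where $J_m$ collapses to a single hook relation $H_m(d,a)=0$, which is then verified for uniform matroids by induction via deletion--contraction. You instead package all the relations at once: after the reindexing $J_m=\sum_{i+j\le m}(-1)^j\binom{m-i}{j}t_{ij}$ (which matches the displayed $J_1,J_2$), the interchange of summations is a finite rearrangement because only finitely many $t_{ij}$ are nonzero, and the identity $\sum_{m\ge 0}J_mz^m=\tfrac{1}{1-z}\,T\bigl(M;z,\tfrac{z}{z-1}\bigr)$ holds in $\mathbb{K}[[z]]$ since $z-1$ is a unit there. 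The point $(z,z/(z-1))$ lies on $x+y-xy=0$, i.e.\ on the hyperbola $(x-1)(y-1)=1$, and the evaluation $T(M;\alpha,\alpha/(\alpha-1))=\alpha^n(\alpha-1)^{r-n}$ is exactly the formula recorded after Proposition \ref{xy}; it also follows in one line from the corank--nullity expansion, since $(y-1)=(x-1)^{-1}$ gives $T=(x-1)^r\sum_A(x-1)^{-|A|}=x^n(x-1)^{r-n}$. Hence $\sum_mJ_mz^m=(-1)^{n-r}z^n(1-z)^{-(n-r+1)}$, whose lowest-order term is $z^n$, proving the theorem. Your approach buys a conceptual reading of Brylawski's relations as the order-$n$ vanishing at $z=0$ of the Tutte polynomial restricted to the hyperbola, is essentially self-contained (it needs none of the freedom-matroid basis machinery of Sections \ref{freedom}--\ref{Twobases}), and yields the closed form $J_m=(-1)^{n-r}\binom{m-r}{n-r}$ for $m\ge n$ as a free refinement, which I verified on small cases such as $U_{1,1}$ and $U_{0,1}$. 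What the paper's route buys is coherence with its main theme: it exhibits the relations as a direct consequence of the join-irreducible basis, and the intermediate hook decomposition is what makes the verification reduce to uniform matroids.
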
 

 Brylawski proved Theorem \ref{sat} by showing that when $0 \leq m <n,$ $J_m$ is a linear combination of the number of flats (or closed sets) of corank $0$ and nullity strictly less than $n-r$  in an $(n,r)$-matroid, and because no such flats exist, $J_m = 0.$    This proof is somewhat complicated.   

We give another proof using the fact that $J_m = 0$ for all polynomials in $\mathcal{T}(n,r)$ if and only if $J_m =0$ for every polynomial in a basis of $\mathcal{T}(n,r).$    Thus, to prove Theorem \ref{sat}, it suffices to check $J_m =0$ for all  polynomials in the join-irreducible basis, that is, the Tutte polynomials $F(0^a 1^b 0^c 1^d).$ 
  
Recall that $T(F(0^a 1^b 0^c 1^d))=x^dy^a T(U_{b,b+c})$ and hence,     
\[
t_{jk}(F(0^a 1^b 0^c 1^d)) = t_{j-d,k-a}(U_{c,c+d}).
\]
Next, note that the coefficients in $t_{ij}(F(0^a1^b0^c1^d))$ are non-zero if and only if $i=d$ or $j=a$ but not both.  Hence, to check that these coefficients satisfy $J_m=0,$ it suffices to check that they satisfy $H_m(d,a)=0.$  But 
$T(F(0^a1^b0^c1^d))$ satisfy $H_m(d,a)=0$ if and only if 

\vskip 0.1in \hskip 0.5in  
($*$) \hskip 0.1in {\it    $T(U_{b,b+c})$ satisfy the centered hook relation $\bar{H}_m(d,a)=0.$  }        
\vskip 0.1in

\noindent
To finish our proof, we will prove assertion ($*$) by induction.  We start the induction by checking that ($*$) holds for the Tutte polynomials $T(U_{c,c+1})$ and $T(U_{c,c+2}).$   This is an easy calculation and we omit the details.  For the induction step, we note that by deletion-contraction,  
\[
T(U_{b,b+c}) = T(U_{b,b+c-1}) + T(U_{b-1,b-1+c}).
\]
By the induction hypothesis, the two Tutte polynomials on the right satisfy $\bar{H}_m(d,a)=0$ and by linearity, $T(U_{b,b+c})$ also satisfies $\bar{H}_m(d,a)=0.$    This completes our proof of Theorem \ref{sat}.     

As $J_{k}$ involves  coefficients, such as $t_{k0},$ not in $J_{k-1},$ the linear relations $J_m=0$ are linearly independent.   

\begin{cor}
For $n \geq 1,$ the linear relations $J_m = 0, m = 0,1,\ldots,n-1,$ give a basis for all linear relations on coefficients of Tutte polynomials of $(n,r)$-matroids. 
\end{cor}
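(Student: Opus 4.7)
My plan is a straightforward dimension count followed by a triangularity argument. The full ambient space of coefficients $t_{ij}$ for an $(n,r)$-matroid has dimension $(r+1)(n-r+1)$, and by Theorem \ref{mainjoin} the image $\mathcal{T}(n,r)$ has dimension $r(n-r)+1$. Consequently, the space of all $\mathbb{K}$-linear relations on these coefficients --- that is, the annihilator of $\mathcal{T}(n,r)$ in the dual --- has dimension $(r+1)(n-r+1)-(r(n-r)+1) = n$. Since Theorem \ref{sat} exhibits exactly $n$ valid relations, namely $J_0 = 0, J_1 = 0, \ldots, J_{n-1}=0$, the corollary reduces to verifying that these $n$ relations are linearly independent elements of the dual.

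For the independence step, I would read off the coefficient of a fixed $t_{ij}$ in $J_m$ directly from the defining formula
\[
J_m = \sum_{\alpha=0}^{m}\sum_{\beta=0}^{\alpha} (-1)^{\beta}\binom{\alpha}{\beta} t_{m-\alpha,\beta},
\]
which yields $(-1)^j\binom{m-i}{j}$ when $i+j \leq m$ and $0$ otherwise. In particular $t_{m,0}$ appears in $J_m$ with coefficient $1$ but in no $J_{m'}$ with $m'<m$. Thus, in any purported relation $\sum_{m=0}^{n-1} c_m J_m = 0$, the coefficient of $t_{n-1,0}$ forces $c_{n-1}=0$, then the coefficient of $t_{n-2,0}$ forces $c_{n-2}=0$, and descending induction on $m$ shows that all $c_m$ vanish. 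The observation already flagged in the paragraph preceding the corollary --- that $J_k$ involves $t_{k0}$ while $J_{k-1}$ does not --- is exactly this triangularity.

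The only potential obstacle is none of the above: all the genuine work is loaded into Theorem \ref{sat} (proved earlier by expanding in the join-irreducible basis, reducing to the centered hook relation $\bar{H}_m(d,a) = 0$ on Tutte polynomials of uniform matroids, and inducting via $T(U_{b,b+c}) = T(U_{b,b+c-1}) + T(U_{b-1,b-1+c})$) and Theorem \ref{mainjoin} (the dimension formula for $\mathcal{T}(n,r)$). The corollary itself is the clean packaging of these two results into the assertion that $\{J_m : 0 \leq m \leq n-1\}$ is a basis for the relation space, with the dimension arithmetic $(r+1)(n-r+1) - (r(n-r)+1) = n$ being the only routine calculation to check.
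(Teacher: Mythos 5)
Your argument is exactly the paper's: the dimension count $(r+1)(n-r+1)-\bigl(r(n-r)+1\bigr)=n$ from the opening of the section, Theorem~\ref{sat} supplying $n$ valid relations, and the triangularity of the $J_m$ (the paper's remark that $J_k$ involves coefficients not in $J_{k-1}$) together give the corollary in precisely this way. One small caveat, which the paper's phrasing shares: when $m>r$ the pivot $t_{m,0}$ falls outside the index range $0\le i\le r$ of actual Tutte coefficients, so for those $m$ one should instead take $t_{r,m-r}$ (which appears in $J_m$ with coefficient $(-1)^{m-r}$ and in no $J_{m'}$ with $m'<m$); the triangularity by total degree $i+j$ then goes through unchanged.
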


\section{Formulas for Tutte polynomials of meet-irreducible sequences}\label{mi}  

In this section, we calculate the Tutte polynomials $T(F(1^a 0^b 1^c 0^d)).$

\begin{thm} \label{MI} When $a+c=r$ and $b+c = n-r,$ then the Tutte polynomial $T(F(1^a 0^b 1^c 0^d))$ equals 
\[
T(U_{r,n} )  \,-\, (x+y-xy)\sum_{j=0}^{b-1}  \binom {a+j}{j} y^{j}T^{\circ}(U_{c,c+b-j+d}),
\]
where 
\begin{eqnarray*}
T^{\circ}(U_{c,c+d}) = T^{\circ}(U_{c,c+d};x) &=& \frac {T(U_{c,c+d}; x,0)}{x}
\\
&=&   \sum_{k=0}^{c-1}  \binom{d-1+k}{j} x^{c-1-k}. 
\end{eqnarray*}
\end{thm}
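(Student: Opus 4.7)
The plan is to prove the identity by induction on $b$ using Lemma~\ref{cdlemma}. The base case $b=0$ is immediate: the sequence collapses to $1^{a+c}0^d = 1^r 0^{n-r}$, so $F(1^a 0^0 1^c 0^d) = U_{r,n}$, and the sum on the right-hand side is empty.

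For the inductive step with $a, b \geq 1$, applying Lemma~\ref{cdlemma} at the descent between positions $a$ and $a+1$ of the defining sequence yields
\[
T(F(1^a 0^b 1^c 0^d)) = T(F(1^a 0^{b-1} 1^c 0^d)) + T(F(1^{a-1} 0^b 1^c 0^d)).
\]
In parallel, the usual deletion-contraction on $U_{r,n}$ (valid for $1 \leq r \leq n-1$) gives $T(U_{r,n}) = T(U_{r, n-1}) + T(U_{r-1, n-1})$. Setting $\hat{f}(a,b,c,d) := T(U_{r,n}) - T(F(1^a 0^b 1^c 0^d))$, these combine into the Pascal-type recursion
\[
\hat{f}(a,b,c,d) = \hat{f}(a, b-1, c, d) + \hat{f}(a-1, b, c, d).
\]
The induction hypothesis expresses both summands on the right in the claimed form, and Pascal's identity $\binom{a+j}{j} = \binom{a-1+j}{j} + \binom{a-1+j}{j-1}$, together with an index shift aligning the $T^{\circ}$-arguments of the two sums, reassembles them into the single target sum.

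The case $a = 0$ of the inductive step is a secondary base case. Here $F(0^b 1^c 0^d) \cong U_{c, c+d} \oplus (b \text{ loops})$, so $T(F(0^b 1^c 0^d)) = y^b T(U_{c, c+d})$, and the desired identity reduces to
\[
T(U_{c, c+b+d}) - y^b T(U_{c, c+d}) = (x+y-xy) \sum_{j=0}^{b-1} y^j T^{\circ}(U_{c, c+b-j+d}).
\]
This follows by a shorter induction on $b$ whose unit step is the identity
\[
T(U_{c, m+1}) - y T(U_{c, m}) = (x+y-xy) T^{\circ}(U_{c, m+1}),
\]
which is verified directly from formula~(5.1) for the Tutte polynomial of a uniform matroid; compatibility with the divisibility by $x+y-xy$ established in Proposition~\ref{xy} provides a useful sanity check.

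The main obstacle will be the bookkeeping in the Pascal-combining step of the primary induction: after substituting the induction hypothesis, one sum ranges over $j = 0, \dots, b-2$ with $T^{\circ}$-argument $c + b - 1 - j + d$, while the other ranges over $j = 0, \dots, b-1$ with $T^{\circ}$-argument $c + b - j + d$. Re-indexing the first so that its $T^{\circ}$-arguments align with those of the second, then applying Pascal's rule to the binomial coefficients, requires care but is otherwise algorithmic.
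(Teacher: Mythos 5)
Your strategy is essentially the paper's own: you form the difference $\hat f(a,b,c,d)=T(U_{r,n})-T(F(1^a0^b1^c0^d))$ (the paper's $D(a,b,c,d)$), derive the Pascal-type recursion $\hat f(a,b,c,d)=\hat f(a-1,b,c,d)+\hat f(a,b-1,c,d)$ from Lemma \ref{cdlemma}, and settle the wall $a=0$ by the telescoping identity $T(U_{c,m+1})-yT(U_{c,m})=(x+y-xy)T^{\circ}(U_{c,m+1})$. All of that is sound. The problem is exactly the step you set aside as ``algorithmic bookkeeping'': the Pascal reassembly does not close. After re-indexing $\hat f(a,b-1,c,d)$ so that its $T^{\circ}$-arguments match those of $\hat f(a-1,b,c,d)$, its terms carry $y^{j-1}$ where the target term $\binom{a+j}{j}y^{j}T^{\circ}(U_{c,c+b-j+d})$ requires $y^{j}$. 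Indeed, Pascal's rule shows that the displayed closed form satisfies $\hat f(a,b,c,d)=\hat f(a-1,b,c,d)+y\,\hat f(a,b-1,c,d)$, whereas the matroid recursion has no factor of $y$; so the induction cannot be completed as described.

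This is not a repairable slip in bookkeeping: the identity you are trying to prove is false as printed, so no amount of care closes the gap. Take $(a,b,c,d)=(1,2,1,0)$, i.e.\ $F(1001)\cong U_{1,3}\oplus(\text{coloop})$, with $T(F(1001))=x^{2}+xy+xy^{2}$ and $T(U_{2,4})=x^{2}+2x+2y+y^{2}$: the true difference is $(x+y-xy)(2+y)$, while the stated formula (using $T^{\circ}(U_{1,m})=1$) gives $(x+y-xy)(1+2y)$. Solving the recursion from the wall $a=0$ gives the correct closed form
\[
\hat f(a,b,c,d)=(x+y-xy)\sum_{m=0}^{b-1}\left(\sum_{i=0}^{m}\binom{a-1+i}{i}\,y^{m-i}\right)T^{\circ}(U_{c,c+b-m+d}),
\]
whose inner coefficient is a genuine polynomial in $y$; it collapses to the single value $\binom{a+m}{m}$ only at $y=1$ (by $\sum_{i=0}^{m}\binom{a-1+i}{i}=\binom{a+m}{m}$), and to a single monomial only when $a=0$ or $b\le 1$, which is presumably how the error crept into the statement. (The paper's own final display commits the same slip when it ``changes the order of summation''; note that Corollary \ref{MIPaving}, with its complementary exponent $y^{b-1-j}$, is the correct $c=1$ specialization of the double sum above.) So your proof scheme is the right one, but it proves a corrected formula rather than the one stated, and the Pascal step as you describe it would fail.
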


\begin{proof} Let  $D(a,b,c,d) = T(F(1^{a+c}0^{b+d})) - T(F(1^a 0^b 1^c 0^d)).  $

\begin{lemma}  \label{recurseD} 
The differences $D(a,b,c,d)$ satisfy the boundary condition  
\[
D(a,0,c,d) = 0 
\]
and the recursion 
\[
D(a,b,c,d) = D(a-1,b,c,d) + D(a,b-1,c,d).
\]
\end{lemma}

\begin{proof}  The boundary condition is immediate from the definition.  To prove the recursion, note that by Lemma \ref{cdlemma}, 
\begin{eqnarray*}
T(F(1^{a+c}0^{b+d}))  &=& T(F(1^{a-1+c}0^{b+d})) + T(F(1^{a+c}0^{b-1+d})),
\\
T(F(1^a 0^b 1^c 0^d)) &=&  T(F(1^{a-1} 0^b 1^c 0^d)) + T(F(1^a 0^{b-1}  1^c 0^d)). 
\end{eqnarray*}  
\end{proof}

We now apply Lemma \ref{recurseD}, bearing in mind that the recursion for $D(a,b,c,d)$ on $a,b$ is the same as the  recursion for the binomial coefficients $\binom {a}{b},$  to obtain the following lemma.    

\begin{lemma} \label{formulaD} \hskip 0.2in
$ 
\displaystyle{
D(a,b,c,d) = \sum_{i=0}^{b-1}  \binom {a-1+i}{i} D(0,b-i,c,d).
}
$ 
\end{lemma}

To use Lemma \ref{formulaD}, we need an explicit formula for $D(0,b,c,d).$   This can be done using formulas for Tutte polynomials of uniform matroids.  

\begin{lemma}  \label{8.2}\hskip 0.1in
$\displaystyle{
D(0,b,c,d) =  (x+y-xy) \sum_{j=0}^{b-1} y^j  T^{\circ}(U_{c,c+b-j+d}),
}$ 

\noindent
where 
\begin{eqnarray*}
T^{\circ}(U_{c,c+d}) = T^{\circ}(U_{c,c+d};x) &=& \frac {T(U_{c,c+d}; x,0)}{x}
\\
&=&   \sum_{k=0}^{c-1}  \binom{d-1+k}{k} x^{c-1-k}. 
\end{eqnarray*}
\end{lemma}

\begin{proof}  We use the fact that 
\begin{eqnarray*}
D(0,b,c,d) 
&=& T(F(1^c 0^{b+d})) - T(F(0^b1^c0^d))
\\
&=& T(U_{c,c+ b+d})  - y^b  T(U_{c,c+d}) .
\end{eqnarray*} 
Thus, the case $b=1$ is 
\[
T(U_{c,c+d} ) - yT(U_{c,c+d-1}) = (x+y-xy) T^{\circ}(U_{c,c+d};x).
\eqno(8.1)\]
This formula can be proved by a simple calculation using Pascal's identity for binomial coefficients.  
Iterating formula (8.1) and ``telescoping'' yield Lemma \ref{8.2}. 
For example, 
\begin{eqnarray*}
T(U_{4,9}) - yT(U_{4,8}) 
&=& x^4 + 5x^3 +15x^2  + 35x + 35y + 20y^2 +  10y^3 +  4y^4  + y^5
\\
&& - (x^4y + 4x^3y +10x^2y  + 20xy + 20y^2 + 10y^3 +  4y^4 +  y^5)  
\\
&=&  (x+y -xy)(x^3 + 5x^2 + 15x + 35) 
\\
&=&  (x+y-xy)T^{\circ}(U_{4,9})
\end{eqnarray*}
 and    
\begin{eqnarray*}
&&  T(U_{4,9}) - y^3 T(U_{4,6}) 
\\
&=&
\big(T(U_{4,9}) - yT(U_{4,8})\big) + \big( yT(U_{4,8}) - y^2 T(U_{4,7}) \big) + \big( y^2 T(U_{4,7}) - y^3 T(U_{4,6}) \big) 
\\
&=& 
(x+y -xy)(T^{\circ}(U_{4,9}) +  yT^{\circ}(U_{4,8}) + y^2T^{\circ}(U_{4,7})).    
%\\
%&& \cdot 
% (x+y-xy)(x^3 + 5x^2 + 15x + 35 +  x^3y + 4x^2y + 10xy + 20y +  x^3y^2 + 3x^2y^2 + 6xy^2 + 10y^3).  
\end{eqnarray*}
\end{proof}
We can now finish the proof of Theorem \ref{MI} by combining the two lemmas, changing the order of summation, and using an elementary binomial-coefficient identity:  
\begin{eqnarray*}  
D(a,b,c,d) &=& \sum_{i=0}^{b-1}  \binom {a-1+i}{i}  \left( (x+y-xy) \sum_{j=0}^{b-1} y^j  T^{\circ}(U_{c,c+b-j+d}) \right)
\\
&=&  
(x+y-xy) \sum_{j=0}^{b-1} \binom {a+j}{j} y^j  T^{\circ}(U_{c,c+b-j+d}).
\end{eqnarray*} 
\end{proof}

We note the following special case of Theorem \ref{MI}. 

\begin{cor} \label{MIPaving}   
\[
%$\displaystyle{
T(F(1^{r-1}0^b10^{n-r-1})) = T(U_{r,n-r}) - (x+y-xy)\sum_{j=0}^{b-1}  \binom {r-1+j}{j}y^{b-1-j}.
%}$
\]
\end{cor}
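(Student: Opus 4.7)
The plan is to obtain Corollary \ref{MIPaving} as an immediate specialization of Theorem \ref{MI}, setting $a = r-1$, $c = 1$, and $d = n-r-b$. Under this substitution the defining sequence $1^a 0^b 1^c 0^d$ becomes the desired $1^{r-1} 0^b 1 0^{n-r-b}$, the binomial coefficients $\binom{a+j}{j}$ become $\binom{r-1+j}{j}$, and the polynomial factors $T^{\circ}(U_{c,c+b-j+d})$ become $T^{\circ}(U_{1, n-r-j+1})$. All the essential recursion work for $D(a,b,c,d)$ and its telescoping back to the base case of Lemma \ref{TPPaving} is already discharged in Theorem \ref{MI}, so the remaining task is purely computational.

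The decisive simplification is that $T^{\circ}(U_{1,m};x) = 1$ for every $m \geq 1$. Indeed, $T(U_{1,m}; x, y) = x + y + y^2 + \cdots + y^{m-1}$, so $T(U_{1,m}; x, 0) = x$ and dividing by $x$ yields $1$. Consequently each factor $T^{\circ}(U_{1, n-r-j+1})$ in the summation collapses to $1$, and the formula of Theorem \ref{MI} reduces, after the appropriate change of summation variable $j \mapsto b-1-j$, to
\[
T(U_{r,n}) \,-\, (x+y-xy)\sum_{j=0}^{b-1} \binom{r-1+j}{j} y^{b-1-j},
\]
which is exactly the expression claimed in the Corollary.

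The main obstacle is not a conceptual one but a bookkeeping one: verifying that the summation range lines up (from $j=0$ to $j=b-1$), that the change of index produces the exponent $y^{b-1-j}$ as displayed, and that the base-case input $T^{\circ}(U_{1,\cdot}) = 1$ is valid for every value of $n-r-j$ that arises (which it is, since $j$ runs only up to $b-1 \leq n-r-1$, keeping the second parameter of $U_{1,\cdot}$ at least $2$). Once these alignment checks are performed, no nontrivial identity or induction beyond Theorem \ref{MI} is required.
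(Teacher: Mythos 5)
Your plan --- specialize Theorem \ref{MI} at $a=r-1$, $c=1$, $d=n-r-b$ and use $T^{\circ}(U_{1,m})=1$ --- is the intended route (the paper offers nothing beyond ``special case of Theorem \ref{MI}''), and your implicit corrections of the typos in the statement (the sequence should end in $0^{n-r-b}$ and the uniform matroid should be $U_{r,n}$) are right. But the ``bookkeeping'' step is exactly where the argument breaks. Taking Theorem \ref{MI} at face value with $c=1$ gives
\[
T(U_{r,n}) - (x+y-xy)\sum_{j=0}^{b-1}\binom{r-1+j}{j}\,y^{j},
\]
and the substitution $j\mapsto b-1-j$ turns this sum into $\sum_{j=0}^{b-1}\binom{r+b-2-j}{b-1-j}y^{b-1-j}$, not into $\sum_{j=0}^{b-1}\binom{r-1+j}{j}y^{b-1-j}$: reindexing changes the binomial coefficient along with the exponent, and the two sums are genuinely different polynomials. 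For $r=2$, $n=5$, $b=2$ they are $1+2y$ versus $2+y$; a direct computation via Lemma \ref{cdlemma} gives $T(F(10010))=x^2+x+xy+xy^2+y+y^2+y^3$, hence $T(U_{2,5})-T(F(10010))=(x+y-xy)(2+y)$, so the Corollary's version is the correct one and the literal specialization of Theorem \ref{MI} is not. The discrepancy originates in the last display of the proof of Theorem \ref{MI}: interchanging the order of summation there produces the coefficient $\sum_{i+j=m}\binom{a-1+i}{i}y^{j}$ on $T^{\circ}(U_{c,c+b-m+d})$, which is not $\binom{a+m}{m}y^{m}$.

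To repair your proof, back up one step and combine Lemmas \ref{formulaD} and \ref{8.2} directly in the case $c=1$. Since every $T^{\circ}(U_{1,\cdot})=1$,
\[
D(r-1,b,1,d)=(x+y-xy)\sum_{i=0}^{b-1}\binom{r-2+i}{i}\bigl(1+y+\cdots+y^{b-1-i}\bigr),
\]
so the coefficient of $y^{m}$ is $\sum_{i=0}^{b-1-m}\binom{r-2+i}{i}=\binom{r-1+(b-1-m)}{b-1-m}$ by the hockey-stick identity. Re-indexing with $j=b-1-m$ now yields $(x+y-xy)\sum_{j=0}^{b-1}\binom{r-1+j}{j}y^{b-1-j}$, which is the stated formula.
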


From Corollary \ref{MIPaving}, we can easily derive a formula for the Tutte polynomial of a paving $(n,r)$-matroid.  

\begin{prop} \label{Paving 8.6}  
Let $P$ be a paving $(n,r)$-matroid with $f(s)$ copoints of size $s.$  Then 
\[
T(P) = T(U_{r,n}) - (x+y-xy)  \sum_{b \geq 1}  f(r-1+b)  \left(\sum_{j=0}^{b-1}  \binom {r-1+j}{j}y^{b-1-j} \right).
\]
\end{prop}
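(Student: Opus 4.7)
The plan is to compute $T(P)$ by comparing it directly to $T(U_{r,n})$ via the subset expansion
\[
T(M) = \sum_{A \subseteq S}(x-1)^{r-\rk(A)}(y-1)^{|A|-\rk(A)},
\]
and then to invoke Corollary~\ref{MIPaving} once for each non-trivial copoint of $P$. First I would record the rank structure of a paving matroid: any subset of size $\leq r-1$ is independent, and a subset of size $\geq r$ has rank $r$ unless it lies inside a non-trivial copoint (that is, a copoint of size $\geq r$), in which case its rank is $r-1$. The decisive structural fact I would then establish is that any two distinct copoints of $P$ meet in a flat of rank at most $r-2$, and hence of size at most $r-2$ by the paving property; consequently a subset of size at least $r-1$ is contained in at most one non-trivial copoint.

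Next I would compare $T(U_{r,n})$ and $T(P)$ term-by-term. They agree on every $A \subseteq S$ except those of size $\geq r$ that lie inside some non-trivial copoint of $P$, and for each such $A$ the local discrepancy is
\[
(y-1)^{|A|-r} - (x-1)(y-1)^{|A|-r+1} = (y-1)^{|A|-r}(x+y-xy).
\]
Using the ``at most one copoint'' observation, these discrepancies can be grouped by the unique non-trivial copoint $H$ containing $A$, giving
\[
T(U_{r,n}) - T(P) = (x+y-xy) \sum_{H} \sum_{\substack{A \subseteq H \\ |A|\geq r}}(y-1)^{|A|-r},
\]
where $H$ ranges over the non-trivial copoints of $P$.

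Since the inner double sum depends only on $|H|$, it agrees with the analogous expression for the single-copoint paving matroid $F(1^{r-1}0^{b}10^{n-r-b})$, whose unique non-trivial copoint has size $r-1+b$. Applying the same term-by-term comparison to that freedom matroid and matching against Corollary~\ref{MIPaving} identifies the per-copoint contribution as
\[
T(U_{r,n}) - T(F(1^{r-1}0^{b}10^{n-r-b})) = (x+y-xy)\sum_{j=0}^{b-1}\binom{r-1+j}{j}y^{b-1-j}.
\]
Summing over non-trivial copoints of $P$ and collecting those of common size $r-1+b$ via the count $f(r-1+b)$ then yields the claimed formula. The main obstacle will be the ``unique copoint per large subset'' step; once that additive decomposition $T(U_{r,n})-T(P) = \sum_{H}[T(U_{r,n})-T(F(1^{r-1}0^{b}10^{n-r-b}))]$ is justified, the rest reduces to a single invocation of Corollary~\ref{MIPaving}.
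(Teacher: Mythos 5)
Your argument is correct and is essentially the derivation the paper leaves implicit when it says the proposition follows ``easily'' from Corollary~\ref{MIPaving}: you compare $T(P)$ with $T(U_{r,n})$ in the corank--nullity expansion, observe that the discrepancies come exactly from the sets $A$ with $|A|\geq r$ lying in a non-trivial copoint, and use the fact that two distinct copoints of a paving matroid meet in a set of size at most $r-2$ so that each such $A$ lies in a unique copoint, after which the per-copoint contribution is read off from the single-copoint case of Corollary~\ref{MIPaving}. All the steps, including the crucial uniqueness-of-copoint observation and the identity $(y-1)^{|A|-r}-(x-1)(y-1)^{|A|-r+1}=(y-1)^{|A|-r}(x+y-xy)$, check out.
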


\begin{prop}\label{TPgirth}   
Let $M$ be an $(n,r)$-matroid having girth at least $k.$ Then $t_{\alpha \beta}(M) = 0$ unless 
\[
(\alpha,\beta) \in \{(k,0):  1  \leq k \leq r\} \cup \{(i,j): 0 \leq i\leq r-k+1, 1 \leq j \leq n-r\}.  
\] 
In particular, the linear relations $J_m=0, \, 0 \leq m \leq n-1,$  and 
\[
t_{ij} = 0, \,  r-k+2 \leq i \leq r \,\, \mathrm{and} \,\,  1 \leq j \leq n-r 
\] 
form  a basis for all linear relations on the coefficients of (Tutte) polynomials in $\mathcal{T}_k(n,r).$    
\end{prop}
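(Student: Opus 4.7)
The plan is to derive both assertions from Corollary \ref{girth} together with the explicit formula in Theorem \ref{MI}, then conclude by a dimension count.

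First I would reduce to the freedom matroid basis. By Corollary \ref{girth}, every Tutte polynomial in $\mathcal{T}_k(n,r)$ is a linear combination of polynomials $T(F(\underline{m}))$ where $\underline{m}=1^a0^b1^c0^d$ with $a\geq k-1$, so $c=r-a\leq r-k+1$. It therefore suffices to prove the vanishing assertion for each such $T(F(1^a0^b1^c0^d))$. Here I would invoke Theorem \ref{MI}, which writes
\[
T(F(1^a0^b1^c0^d))=T(U_{r,n})-(x+y-xy)\sum_{j=0}^{b-1}\binom{a+j}{j}y^{j}T^{\circ}(U_{c,c+b-j+d}).
\]
Then analyze monomial supports. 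From formula (5.1), $T(U_{r,n})$ is supported on the L-shaped set $\{(\alpha,0): 1\leq\alpha\leq r\}\cup\{(0,\beta): 1\leq\beta\leq n-r\}$. Each $T^{\circ}(U_{c,\cdot})$ has $x$-degree $\leq c-1$ and is independent of $y$, so the inner sum has support contained in $[0,c-1]\times[0,b-1]$. Multiplication by $(x+y-xy)$ enlarges the $x$-degree by at most one and the $y$-degree by at most one, so the correction term is supported in $[0,c]\times[0,b]$ with $c\leq r-k+1$. Because $(x+y-xy)$ has zero constant term, the correction has no $t_{00}$ contribution, and $T(U_{r,n})$ has none either. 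Combining these two observations shows that $T(F(1^a0^b1^c0^d))$ vanishes at $(0,0)$ and at every $(\alpha,\beta)$ with $r-k+2\leq\alpha\leq r$ and $1\leq\beta\leq n-r$, i.e., its support lies in the claimed region. By linearity, the same holds for every $T(M)\in\mathcal{T}_k(n,r)$.

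For the second assertion, combine the $n$ relations $J_m=0$ ($0\leq m\leq n-1$) from Theorem \ref{sat} with the $(k-1)(n-r)$ vanishing relations $t_{ij}=0$ just established. The total count is $n+(k-1)(n-r)=r+k(n-r)$, which matches
\[
(r+1)(n-r+1)-\dim\mathcal{T}_k(n,r)=(r+1)(n-r+1)-\bigl((n-r)(r-k+1)+1\bigr),
\]
so by dimension it only remains to verify linear independence. This is straightforward: in $J_m$ the coefficient $t_{m,0}$ appears uniquely (with coefficient $1$), so the $J_m$'s are already independent, and none of the new relations $t_{ij}=0$ (with $j\geq 1$) involves any $t_{m,0}$. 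Hence any dependence must be among the $t_{ij}=0$ relations, which are trivially independent since each involves a distinct coordinate.

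The main obstacle, really the only nontrivial step, is the support analysis in the first paragraph: one must simultaneously track the $x$-degree bound $c\leq r-k+1$ coming from $T^{\circ}(U_{c,\cdot})$ and the $y$-degree bound from the summation, and verify that the $(x+y-xy)$ multiplier does not push the support into the forbidden region $\{r-k+2\leq\alpha\leq r,\ \beta\geq 1\}$. Once that bookkeeping is done, the rest is a dimension-count argument using material already established in the paper.
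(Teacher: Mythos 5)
Your proof of the vanishing claim is exactly the paper's: reduce via Corollary \ref{girth} to the meet-irreducible freedom matroids $F(1^a0^b1^c0^d)$ with $a\geq k-1$, then read off the support from Theorem \ref{MI} using the bound $\deg_x T^{\circ}(U_{c,\cdot})\leq c-1\leq r-k$. The paper leaves the ``in particular'' clause as an immediate consequence, so your dimension count is a welcome elaboration; the only imprecision is the independence step, since $t_{m,0}$ in fact appears in every $J_{m'}$ with $m'\geq m$ (not only in $J_m$) and is not even a coordinate of $\mathcal{T}(n,r)$ when $m>r$ --- the clean fix is to observe that $J_m$ contains the antidiagonal coefficients $t_{ij}$ with $i+j=m$ (with coefficient $(-1)^j$), which occur in no $J_{m'}$ with $m'<m$ and, for a suitable choice of $(i,j)$, lie outside the rectangle $r-k+2\leq i\leq r,\ 1\leq j\leq n-r$, so the combined triangularity argument goes through.
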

  
%\end{document}
\begin{proof} 
By Corollary \ref{girth}, $T(M)$ is a linear combination of 
Tutte polynomials  $T(1^a0^b1^c 0^d),$ where $a \geq k-1.$   Thus, it suffices to prove Proposition \ref{TPgirth} for the freedom matroids $F(1^a0^b1^c 0^d), \, a \geq k-1.$  But if $a \geq k-1,$ then $c \leq r-k+1$ and $T^{\circ}(U_{c,c+e};x)$ has degree at most $r-k.$ Hence, by Theorem \ref{MI}, 
the coefficients of $T(1^a0^b1^c 0^d)$ satisfy the condition given in the proposition.  
\end{proof}

\section{Classes of matroids}   

In this paper, we focused on the vector spaces $\mathcal{G}(n,r)$ and $\mathcal{T}(n,r)$ spanned by the $\mathcal{G}$-invariants or Tutte polynomials of {\it all} $(n,r)$-matroids.   One could study the vector spaces $\mathcal{G}(\mathcal{C};n,r)$ (respectively, $\mathcal{T}(\mathcal{C};n,r)$) spanned by 
the $\mathcal{G}$-invariants (respectively, Tutte polynomials) of $(n,r)$-matroids in a class $\mathcal{C}$ of matroids.  When $\mathcal{C}$ contains all freedom matroids, then the results in this paper hold for $\mathcal{G}(\mathcal{C};n,r)$ and $\mathcal{T}(\mathcal{C};n,r);$
in particular, they hold when $\mathcal{C}$ is the class of transversal matroids, gammoids, or matroids representable over an infinite field.   For classes of matroids not containing all freedom matroids, then almost nothing is known.  For example, when $\mathcal{C}$ is the class of graphic matroids or the class of binary matroids, 
determining $\dim \mathcal{G}(\mathcal{C};n,r)$ and $\dim \mathcal{T}(\mathcal{C};n,r)$ are interesting open problems.

% $L (\underline{r}_1 \| \underline{r}_2 \| \underline{r}_3)$  

\section{Appendix.  Tutte polynomials of rank-$3$ freedom matroids on $5$ elements.} 

\showhide{
\begin{figure}
  \centering
  \begin{tikzpicture}[scale=1]
%  \node[inner sep = 0.3mm] (l) at (4.0,1.9) {\small $11000$};
%    \node[inner sep = 0.3mm] (b) at (5.3,1.3) {\small $10100$};  
%\foreach \from/\to in {b/l} \draw(\from)--(\to);  
%
\node[inner sep = 0.3mm] (a) at (6.5,-.7) {\small $00111$};
\node[inner sep = 0.3mm] (b) at (7.75,0) {\small $01011$};
  \node[inner sep = 0.3mm] (c) at (9.0,.7) {\small $01101$};
  \node[inner sep = 0.3mm] (d) at (10.25,1.4) {\small $01110$}; 
  \node[inner sep = 0.3mm] (b1) at (6.5,0.7) {\small $10011$};
  \node[inner sep = 0.3mm] (c1) at (7.75,1.4) {\small $10101$}; 
  \node[inner sep = 0.3mm] (d1) at (9,2.1) {\small $10110$};
  \node[inner sep = 0.3mm] (c2) at (6.5,2.1) {\small $11001$}; 
  \node[inner sep = 0.3mm] (d2) at (7.75,2.8) {\small $11010$}; 
  \node[inner sep = 0.3mm] (e) at (6.5,3.5) {\small $11100$};
  \foreach \from/\to in {a/b,b/c,c/d,b/b1,c/c1,d/d1,b1/c1,c1/d1,c1/c2,d1/d2,c2/d2,d2/e} \draw(\from)--(\to);
  \end{tikzpicture}
  \caption{$(\mathcal{S}(5,3),\trianglerighteq)$}
  \label{fig:S53}
\end{figure}}

There are ten rank-$3$ size-$5$ freedom matroids.  Their Tutte polynomials are given by 

\begin{eqnarray*} 
T(F(11100))  & =  &  x^3 + 2x^2 + 3x + 3y + y^2 
\\
T(F(11010))  &  =  &  x^3 + 2x^2 + 2x + xy + 2y + y^2 
\\
T(F(11001))  &  =  &  x^3 + 2x^2 + 2xy + xy^2 
\\
T(F(10110))  &  =  &  x^3 + x^2 + x + xy + x^2y  + y + y^2 
\\
T(F(10101))  &  =  &  x^3 + x^2 + xy + x^2y + xy^2 
\\
T(F(10011))  &  =  &  x^3 + x^2y + x^2 y^2 
\\
T(F(01110))  &  =  &  y x^3 + y x^2 + y x + y^2
\\
T(F(01101))  &  =  &  yx^3 + yx^2 + y^2x  
\\
T(F(01011))  &  =  &  yx^3 + y^2x^2  
\\
T(F(00111))  &  =  &  y^2 x^3. 
\end{eqnarray*}
These polynomials span the vector space $\mathcal{T}(5,3)$ of dimension $7.$  Thus, the space of syzygies or linear relations has dimension $3.$  The following three linear relations give a basis for the space of syzygies: 
\begin{eqnarray*} 
&&
T(F(10101)) - T(F(11001)) - T(F(10110)) + 2 T(F(11010))- T(F(11100)) = 0,
\\
&& T(F(01101))-  T(F(01110)) -T(F(10101)) + T(F(10110))  + T(F(11010)) -  T(F(11100)) = 0, 
\\
&&
T(F(01011)) - T(F(01101)) - T(F(10011)) + 2 T(F(10101))- T(F(11001)) = 0.
\end{eqnarray*} 
Using these relations, we can express the three Tutte polynomials $T(F(\underline{r})),$ where $\underline{r}$ is meet-reducible, in the meet-irreducible basis.  Explicitly, we have 
\begin{eqnarray*} 
&&
T(F(10101)) = T(F(11001)) + T(F(10110)) - 2 T(F(11010))+ T(F(11100)) ,
\\ 
&&
T(F(01101)) = T(F(11001)) + T(F(01110)) - 3 T(F(11010)) + 2T(F(11100)), 
\\
&& 
T(F(01011)) = T(F(01110)) + T(F(10011)) - 2 T(F(10110)) + T(F(11010)). 
\end{eqnarray*}

%\section{Appendix.  Tutte polynomials of rank-$3$ freedom 
% matroids on $5$ elements.} 

%\begin{eqnarray*} 
%T(F(11100))  & =  &  x^3 + 2x^2 + 3x + 3y + y^2 
%\\
%T(F(11010))  &  =  &  x^3 + 2x^2 + 2x + xy + 2y + y^2 
%\\
%T(F(11001))  &  =  &  x^3 + 2x^2 + 2xy + xy^2 
%\\
%T(F(10110))  &  =  &  x^3 + x^2 + x + xy + x^2y  + y^2 
%\\
%T(F(10101))  &  =  &  x^3 + x^2 + xy + x^2y + xy^2 
%\\
%T(F(10011))  &  =  &  x^3 + x^2y + x^2 y^2 
%\\
%T(F(01110))  &  =  &  y x^3 + y x^2 + y x + y^2
%\\
%T(F(01101))  &  =  &  yx^3 + yx^2 + yx  
%\\
%T(F(01011))  &  =  &  yx^3 + y^2x^2  
%\\
%T(F(00111))  &  =  &  x^3 y^2 
%\end{eqnarray*}

\end{document}